\newtheorem{theorem}{Theorem}[section]
\newtheorem{lemma}[theorem]{Lemma}
\newtheorem{corollary}[theorem]{Corollary}
\newtheorem{proposition}[theorem]{Proposition}
\newtheorem{remark}[theorem]{Remark}
\newtheorem{definition}[theorem]{Definition}
\newtheorem{example}[theorem]{Example}
\numberwithin{equation}{section}
\newcommand{\bz}{{\mathbb B}}
\newcommand{\cz}{{\mathbb C}}
\newcommand{\gz}{{\mathbb Z}}
\newcommand{\nz}{{\mathbb N}}
\newcommand{\rz}{{\mathbb R}}
\newcommand{\calA}{\mathcal{A}}
\newcommand{\calD}{\mathcal{D}}
\newcommand{\calH}{\mathcal{H}}
\newcommand{\frakA}{\mathfrak{A}}
\newcommand{\scrA}{\mathscr{A}}
\newcommand{\scrB}{\mathscr{B}}
\newcommand{\scrC}{\mathscr{C}}
\newcommand{\scrF}{\mathscr{F}}
\newcommand{\scrH}{\mathscr{H}}
\newcommand{\scrL}{\mathscr{L}}
\newcommand{\scrP}{\mathscr{P}}
\newcommand{\scrS}{\mathscr{S}}
\newcommand{\ad}{\mathrm{ad}}
\newcommand{\aps}{{\mathrm{APS}}}
\newcommand{\cl}{\mathrm{cl}}
\newcommand{\dom}[1]{\operatorname{dom}\left( #1\right)}
\newcommand{\domain}{\mathrm{dom}}
\newcommand{\eps}{\varepsilon}
\newcommand{\forget}[1]{}
\newcommand{\lra}{\longrightarrow}
\newcommand{\pO}{{\partial\Omega}}
\newcommand{\pt}[1]{\left( #1 \right) }
\newcommand{\spk}[1]{\langle#1\rangle}
\newcommand{\wh}{\widehat}
\newcommand{\wt}{\widetilde}
\begin{document}

\title[APS-type boundary conditions and spectral triples]%
{Boundary value problems with Atiyah-Patodi- Singer type conditions and spectral triples}

\author{U.\ Battisti}
\address{Universit\`a di Torino, Dipartimento di Matematica, V. Carlo Alberto 10, 10123  Torino (Italy)}
\email{ubertino.battisti@unito.it,\  joerg.seiler@unito.it}

\author{J.\ Seiler}

\maketitle

\begin{abstract}
We study realizations of pseudodifferential operators acting on sections of vector-bundles on a smooth,  
compact manifold with boundary, subject to conditions of Atiyah-Patodi-Singer type. 
Ellipticity and Fredholm property, compositions, adjoints and self-adjoint\-ness of such realizations are 
discussed. We construct regular spectral triples $(\calA,\calH,\calD)$ for manifolds with boundary of 
arbitrary dimension, where $\calH$ is the space of square integrable sections. 
Starting out from Dirac operators with APS-conditions, these triples are even in 
case of even dimensional manifolds; we show that the closure of $\calA$ in $\scrL(\calH)$ coincides with 
the continuous functions on the manifold being constant on each connected component of the boundary.   
\end{abstract}

\textbf{Keywords:} Spectral triples, manifolds with boundary, boundary value problems with APS-type conditions, 
pseudodifferential operators

\textbf{MSC (2010):} 58B34, 58J32 (primary); 47L15, 35S15 (secondary)

\section{Introduction}\label{sec:intro}

Spectral triples play a fundamental role in non commutative geometry
and provide a new approach to several fields in mathematics and physics.
One of the most striking results involving spectral triples is Connes' famous reconstruction Theorem 
\cite{Co13}, which shows that one can $($re-$)$construct from a commutative spectral triple 
$(\calA,\calH,\calD)$, satisfying certain 
axioms,  a compact oriented manifold \textit{without boundary} $M$ such that $\calA$ is isomorphic to 
$\scrC^{\infty}(M)$.
In the past years, the definition of spectral triple has been extended to different settings.
For example by Lescure \cite{Le01} to manifolds with conical singularities, by 
Lapidus \cite{LA97}, Cipriani et al. \cite{CG14}, 
and Christensen et al. \cite{CIS12} to fractals. Our paper provides a contribution to the analysis of 
spectral triples for manifolds 
with $($smooth$)$ boundary, mainly motivated by the recent work \cite{IoLe} of Iochum and Levy.

The central analytic tool our approach relies on is Boutet de Monvel's algebra of pseudodifferential 
boundary value problems 
\cite{Bout}, respectively a suitable extension of it going back to Schulze \cite{Schu37}, cf.\ also Seiler \cite{Seil}. 
This calculus provides an efficient framework for the application of microlocal methods in partial differential 
equations, geometric 
analysis and index theory for manifolds with boundary. We shall use this calculus for a systematic study 
of \textit{realizations} 
$($i.e., closed extensions$)$ of $($pseudo$)$differential operators on compact manifolds subject to 
homogeneous boundary 
conditions. This study is inspired by and extends work of Grubb \cite{Grub}. In comparison to her results, 
we allow a wider class of 
boundary conditions which we named \textit{APS-type conditions}, since the classical spectral boundary conditions 
of Atiyah-Patodi-Singer \cite{APS} are a particular example of such conditions. Specifically, these boundary conditions 
are of the form 
 $$\scrC^\infty(\Omega,E)\lra \scrC^\infty(\pO,F),\qquad u\mapsto Tu:=P(S\rho +T^\prime)u,$$
where $\Omega$ is a smooth Riemannian manifold with boundary, $E$ is a hermitian vector bundle over $\Omega$, 
$F=F_0\oplus\ldots\oplus F_{d-1}$ with $F_j$ hermitian vector bundles over $\pO$ $($possibly zero-dimensional$)$, 
$T^\prime=(T_0^\prime,\ldots,T_{d-1}^\prime)$ with trace operators 
$T^\prime_k:\scrC^\infty(\Omega,E)\to\scrC^\infty(\pO,F_j)$ of order $j+1/2$, 
$\rho=(\gamma_0,\ldots,\gamma_{d-1})^t$ with $\gamma_j$ denoting the operator of restriction to the boundary 
of the $j$-th derivative in direction normal to the boundary, $S=(S_{jk})_{0\le j,k\le d-1}$ with 
$S_{jk}\in L^{j-k}_\cl(\partial\Omega;E|_{\pO},F_j)$ being classical $($i.e., step one poly-homogeneous$)$ 
pseudodifferential operators of order $j-k$ on the boundary, and an \textit{idempotent}  
$P=(P_{jk})_{0\le j,k\le d-1}$ with $P_{jk}\in L^{j-k}_\cl(\partial\Omega;F_k,F_j)$. We then 
consider operators with domain $\{u\in H^d(\Omega,E)\mid Tu=0\}$ and with action given by a $d$-th order 
operator from Boutet de Monvel's calculus acting between sections of $E$. In Section \ref{sec:real} we discuss  
ellipticity and Fredholm property, the adjoint $($in particular, self-adjointness$)$ and composition of such realizations. 

In this context we prove and make use of a result on the invariance of the Fredholm index and the existence 
of inverses $($parametrices$)$ modulo projections onto the kernel for operators acting in families of Banach 
spaces, generalizing known, analogous results for pseudodifferential operator algebras to an abstract setting. 
This result is of independent interest as it applies to any operator algebra satisfying some very natural conditions, 
and is presented in the Appendix.  

The framework developed in Section \ref{sec:real} allows us to introduce and analyze, in Section \ref{sec:spectral}, 
spectral triples for manifolds with boundary. At a first glance, the approach is very similar to that of 
Iochum and Levy \cite{IoLe}, 
however it provides a true extension of their results. 
The main example of \cite{IoLe} are spectral triples based on Dirac 
operators equipped with chiral boundary conditions; there are good physical and mathematical motivations to consider 
this kind of boundary conditions, as it has been already done in several other works, see \cite{BG92} and \cite{CC07} 
for example. Being local conditions, Iochum and Levy could rely on the results of Grubb \cite{Grub} mentioned above. 
However, it is well known that chiral boundary condition cannot be defined in all settings. 
Indeed, it is always possible only in case the underlying manifold is of even dimension, in general a chirality operator 
is not naturally defined. In view of this lack of generality it seems natural to 
make use of \emph{non-local} APS-type boundary conditions and, in fact, this is what our approach permits to do. 
We show how to define \textit{regular} spectral triples $\pt{\calA^{\infty}_\calD, \calH, \calD}$ on every 
compact manifold with boundary, including, as particular example, those triples starting from Dirac operators 
equipped with APS boundary conditions. In the case of even-dimensional manifolds we show that the latter 
spectral triples respect the natural grading defined by the chirality, and therefore define so-called 
\textit{even} spectral triples, see Remark \ref{rem:evenspectral}.

Analogously to the case of chiral boundary conditions, the algebra $\calA_{\calD}^{\infty}$ is not the whole space 
$\scrC^{\infty}(\Omega)$, but a true subalgebra. In general, it is difficult to describe this algebra in explicit terms, 
but, in the case of a Dirac operator with APS conditions, we prove that the \textit{closure} of 
$\calA_{\calD}^{\infty}$ with respect to the supremum norm is the $C^*$-algebra of continuous functions 
being constant on each connected component of the boundary. The knowledge of this closure is important, 
since it plays a key role in Connes' reconstruction Theorem. In this context, our result implies that the spectral 
triple does \textit{not} fulfill the so-called Finiteness Axiom in \cite{Co13}, cf.\ Section \ref{sec:exe}. Roughly 
speaking, $\calA^{\infty}_\calD$ results to be \emph{too small} to see the geometric properties of the boundary. 
This kind of negative result actually indicates that the correct notion of spectral triple able to reconstruct 
manifolds with boundary, taking properly into account the geometry of the boundary, still has to be found. 
For the time being we have to leave this as an open problem for future research.

\textbf{Convention:} Throughout the text, we denote by $\Omega$ a smooth, compact, Riemannian manifold with 
boundary. On a collar-neighborhood $U$ of the boundary, identified with $\partial\Omega\times[0,\eps)$ 
and using the splitting of variables $x=(x^\prime,x_n)$, we assume the metric to be of product-form 
$g_{\partial\Omega}+dx_n^2$. Vector bundles over $\Omega$ mean smooth, hermitian vector-bundles that respect 
the product structure near the boundary, i.e., if $E$ denotes such a bundle, then $E|_U=\pi^* E|_{\partial\Omega}$ 
with $\pi(x^\prime,x_n)=x^\prime$ the canonical projection of the collar-neighborhood onto the boundary. In writing 
$\scrC^\infty(\Omega)$ we mean functions smooth up to $($i.e., including$)$ the boundary. 

\section{Boutet de Monvel's calculus for Toeplitz type operators}\label{sec:BdM}

Boutet de Monvel's algebra for boundary value problems on $\Omega$ consists of certain operators in block-matrix form,  
 \begin{equation}\label{eq:bdm01}
  \scrA=
  \begin{pmatrix}
   A_++G & K \\
   T & Q
  \end{pmatrix}\;:\quad
  \begin{matrix}
   \scrC^\infty(\Omega,E_0)\\
   \oplus\\
   \scrC^\infty(\pO,F_0)
  \end{matrix}
  \longrightarrow
  \begin{matrix}
   \scrC^\infty(\Omega,E_1)\\
   \oplus\\
   \scrC^\infty(\pO,F_1)
  \end{matrix}, 
 \end{equation}
where $E_j$ and $F_j$ are vector bundles over $\Omega$ and $\pO$, respectively, which are  allowed to be zero 
dimensional. Every such operator has an order, denoted by $\mu\in\gz$, and a type, denoted by 
$d\in\nz_0$.\footnote{It is possible to introcduce operators with negative type, cf.\ \cite{Grub}. However, for our 
purpose it is sufficient to consider non-negative types only.} 
To fix some terminology, 
\begin{itemize}
 \item $A_+$ is the ``restriction" to the interior of $\Omega$ of a $\mu$-th order, classical pseudodifferential 
  operator $A$ defined on the smooth double $2\Omega$,  having the $($two-sided$)$ transmission property with respect to 
  $\pO$,\footnote{$A_+=r_+ Ae_+$, where $r_+$ denotes the operator of restricting distributions from $2\Omega$ to 
  $\mathrm{int}\,\Omega$ and $e_+$ denotes the operator of extending $($sufficiently regular$)$ distributions by 0 from 
  $\mathrm{int}\,\Omega$ to $2\Omega$. If $A$ is differential, $A_+$ coincides with the standard action of $A$ on distributions; 
  occasionally we will therefore drop the subscript $+$ when dealing with differential operators.}  
 \item $G$ is a singular Green operator of order $\mu$ and type $d$, 
 \item $K$ is a $\mu$-th order potential operator, 
 \item $T$ is a trace operator of order $\mu$ and type $d$, 
 \item $Q$ is a $\mu$-th order, classical pseudodifferential operator on the boundary $\pO$. 
\end{itemize}
The space of all such operators we shall denote by $\scrB^{\mu,d}(\Omega;(E_0,F_0),(E_1,F_1))$. 

As a matter of fact, with $\scrA$ is associated a 
$($homogeneous$)$ principal symbol 
 $$\sigma^\mu(\scrA)=\big(\sigma^\mu_\psi(\scrA),\sigma^\mu_\partial(\scrA)\big),$$
where 
 $$\sigma^\mu_\psi(\scrA)=\sigma^\mu_\psi(A):\pi_\Omega^*E_0\lra\pi_\Omega^*E_1$$
is the usual principal symbol of the pseudodifferential operator $A$ $($restricted to $T^*\Omega)$, while 
 $$\sigma^\mu_\partial(\scrA):
     \begin{matrix}\pi_\pO^*(\scrS(\rz_+)\otimes E_0)\\ \oplus\\ \pi_\pO^* F_0\end{matrix}
     \lra
     \begin{matrix}\pi_\pO^*(\scrS(\rz_+)\otimes E_1)\\ \oplus\\ \pi_\pO^* F_1\end{matrix}$$
is the so-called principal boundary symbol; here $\pi_M:T^*M\to M$ denotes the canonical projection of the 
tangent bundle to the manifold and $\pi_M^*$ indicates pull-back of vector-bundles and 
$\scrS(\rz_+)\otimes E$ denotes the bundle with fibre $\scrS(\rz_+,E_y)$ in $y\in\pO$. 

For convenience of the reader, in the following subsection we shall shortly describe the above mentioned structures 
in the model case of $\Omega$ being a half-space and the bundles involved being trivial one-dimensional. 
For more complete descriptions we refer the reader to the existing literature on Boutet de Monvel's calculus, 
for instance \cite{Bout}, \cite{Grub}, \cite{ReSc}, and \cite{Schr}.

\subsection{A few details on the structure of the operators}\label{sec:BdM01}
Let $\Omega=\rz^{n-1}\times(0,+\infty)$ with variable $x=(x^\prime,x_n)$ and corresponding co-variable 
$\xi=(\xi^\prime,\xi_n)$. 
With $[\,\cdot\,]$ denote a smooth, positive function that coincides with the Euclidean norm outside
a neighborhood of the origin. Let 
 $$k(x^\prime,\xi^\prime;y_n)=k_0(x^\prime,\xi^\prime;[\xi^\prime] y_n),$$
where $k_0(x^\prime,\xi^\prime;t)$ behaves like a classical pseudodifferential symbol  of order 
$\mu+1/2$ in the variables 
$(x^\prime,\xi^\prime)$, while in $t$ like a rapidly decreasing function $($smooth up to $t=0)$. Then 
 $$K\varphi(x^\prime,x_n)
     = (2\pi)^{-n+1}\int_{\rz^{n-1}} e^{i x^\prime \xi^\prime}k(x^\prime, \xi^\prime; x_{n}) \,
         \scrF\varphi(\xi^\prime)  \, d\xi^\prime $$
defines a Poisson operator of order $\mu$, while  
 $$T_0 u(x^\prime) = (2\pi)^{-n+1} \int_{\rz^{n-1}}\int_{0}^\infty 
     e^{i x^\prime \xi^\prime}k(x^\prime, \xi^\prime; y_{n}) \,
    \scrF_{y^\prime\to\xi^\prime}u(\xi^\prime, y_{n})  \, dy_{n} d\xi^\prime $$ 
defines a trace operator of order $\mu$ and type $0$ $($note that taking formal adjoints with respect to the 
corresponding $L_2$-scalar products gives a one-to-one correspondence between these two types of operators$)$. 
A trace operator of order $\mu$ and type $d$ is of the form 
 $$Tu = \sum_{j=0}^{d-1} S_{j}\left(\left.\frac{d^ju}{dx_n^j}\right|_{x_n=0}\right) + T_0u $$ 
with classical pseudodifferential operators $S_{j}$ of order $\mu-j-1/2$ on the boundary $\rz^{n-1}$. 
A singular Green operator of order $\mu$ and type $0$ has the form 
 $$G_0 u(x^\prime,x_n) = (2\pi)^{-n+1} \int_{\rz^{n-1}}\int_{0}^\infty 
     e^{i x^\prime \xi^\prime}k(x^\prime, \xi^\prime; x_{n},y_n) \,
    \scrF_{y^\prime\to\xi^\prime}u(\xi^\prime, y_{n})  \, dy_{n} d\xi^\prime, $$ 
where  
 $$k(x^\prime,\xi^\prime;x_n,y_n)=g_0(x^\prime,\xi^\prime;[\xi^\prime] x_n,[\xi^\prime] y_n)$$
with a function $g_0(x^\prime,\xi^\prime;s,t)$ that behaves like a classical pseudodifferential symbol  of order 
$\mu+1$ in  $(x^\prime,\xi^\prime)$, while in $(s,t)$ like a rapidly decreasing function $($and smooth up to $s=0$ 
and $t=0)$. A singular Green operator of order $\mu$ and type $d$ is then of the form 
 $$Gu = \sum_{j=0}^{d-1} K_{j}\left(\left.\frac{d^ju}{dx_n^j}\right|_{x_n=0}\right) + G_0u $$ 
with Poisson operators $K_{j}$ of order $\mu-j-1/2$. 

The corresponding principal boundary symbols are defined as 
\begin{align*}
 \sigma^\mu_\partial&(K)(x^\prime,\xi^\prime):\cz\lra\scrS(\rz_+),\quad 
     c\mapsto c\,k_0^{(\mu+1/2)}(x^\prime,\xi^\prime;|\xi^\prime|\cdot)\\
 \sigma^\mu_\partial&(T_0)(x^\prime,\xi^\prime):\scrS(\rz_+)\lra\cz,\quad 
     u\mapsto \int_0^\infty k_0^{(\mu+1/2)}(x^\prime,\xi^\prime;|\xi^\prime|y_n)u(y_n)\,dy_n
\end{align*}
for potential and trace operators of type $0$, where $k_0^{(\mu+1/2)}$ denotes the homogeneous 
principal symbol of $k_0$ with respect to $(x^\prime,\xi^\prime)$. Moreover, 
\begin{align*}
 \sigma^\mu_\partial(T)(x^\prime,\xi^\prime)u=
 \sum_{j=0}^{d-1} \sigma_\psi^{\mu-j-1/2}(S_j)(x^\prime,\xi^\prime)\frac{d^ju}{dx_n^j}(0) + 
 \sigma^\mu_\partial(T_0)(x^\prime,\xi^\prime)u.
\end{align*}
Concerning the singular Green operators, we similarly have 
\begin{align*}
 \sigma^\mu_\partial(G_0)(x^\prime,\xi^\prime):\scrS(\rz_+)\lra\scrS(\rz_+),\quad 
 u\mapsto \int_0^\infty g_0^{(\mu+1)}(x^\prime,\xi^\prime;|\xi^\prime|\cdot,|\xi^\prime|y_n)u(y_n)\,dy_n,
\end{align*}
and
 $$\sigma^\mu_\partial(G)(x^\prime,\xi^\prime)u 
     = \sum_{j=0}^{d-1} \sigma^{\mu-j-1/2}_\partial(K_{j})(x^\prime,\xi^\prime)\frac{d^ju}{dx_n^j}(0) 
     + \sigma^\mu_\partial(G_0)(x^\prime,\xi^\prime)u. $$ 
\subsection{Basic properties of Boutet's calculus}\label{sec:BdM02}

The above described class of operators forms an ``algebra" in the sense that composition of operators induces 
maps  
\begin{align*}
 \scrB^{\mu_1,d_1}&(\Omega;(E_1,F_1),(E_2,F_2))\times \scrB^{\mu_0,d_0}(\Omega;(E_0,F_0),(E_1,F_1))\\
 &\lra \scrB^{\mu,d}(\Omega;(E_0,F_0),(E_2,F_2)),
\end{align*}
where the resulting order and type are  
 $$\mu=\mu_0+\mu_1,\qquad  d=\max(d_0,d_1+\mu_0).$$
The operators, initially acting on smooth sections, extend by density and continuity to Sobolev spaces, i.e., 
$\scrA\in\scrB^{\mu,d}(\Omega;(E_0,F_0),(E_1,F_1))$ induces maps 
\begin{equation}\label{eq:cont_ext1}
 \begin{matrix}
       H^s_p(\Omega,E_0)\\ \oplus\\ B^{s-(\frac{1}{p}-\frac{1}{2})}_{pp}(\partial\Omega,F_0)
     \end{matrix}
     \lra
     \begin{matrix}
       H^{s-\mu}_p(\Omega,E_0)\\ \oplus\\ B^{s-\mu-(\frac{1}{p}-\frac{1}{2})}_{pp}(\partial\Omega,F_1)
     \end{matrix},
     \qquad s>d-1+\frac{1}{p},
\end{equation}
where $1<p<\infty$ and $H^s_p$ denotes the standard Sobolev $($Bessel potential$)$ spaces, while 
$B^s_{pq}$ are the usual Besov spaces. Similarly, the boundary symbol extends to maps 
\begin{equation}\label{eq:cont_ext2}
 \begin{matrix}\pi_\pO^*(H^s_p(\rz_+)\otimes E_0)\\ \oplus\\ \pi_\pO^* F_0\end{matrix}
  \lra
 \begin{matrix}\pi_\pO^*(H^{s-\mu}_p(\rz_+)\otimes E_1)\\ \oplus\\ \pi_\pO^* F_1\end{matrix}.
\end{equation}
We shall employ these properties only in the Hilbert space case $p=2$; 
in this case $B^s_{pp}=H^s_2$ and we eliminate the index $p=2$ from the notation. 

\subsection{Toeplitz type operators and ellipticity}

In this paper we shall need an extended version of Boutet de Monvel's calculus. As described here, this calculus was 
introduced in \cite{Schu37}; it can be also obtained as a special case from a general approach to operator-algebras of 
Toeplitz type developed in \cite{Seil}.  

Let $P_j\in L^0_\cl(\pO;F_j,F_j)$, $j=0,1$, be two pseudodifferential projections on the boundary of $\Omega$. 
We then denote by 
 $$\scrB^{\mu,d}(\Omega;(E_0,F_0;P_0),(E_1,F_1;P_1))$$
the space of all operators $\scrA\in\scrB^{\mu,d}(\Omega;(E_0,F_0),(E_1,F_1))$ such that 
 $$\scrA(1-\scrP_0)=(1-\scrP_1)\scrA=0,\qquad \scrP_j:=\begin{pmatrix}1&0\\0&P_j\end{pmatrix}.$$
Being projections, the range spaces $H^s(\pO,F_j,P_j):=P_j\big(H^s(\pO,F_j)\big)$ are closed subspaces of 
$H^s(\pO,F_j)$, and any such $\scrA$ induces continuous maps 
\begin{equation}\label{eq:cont_ext3}
 \begin{matrix}
       H^s(\Omega,E_0)\\ \oplus\\ H^{s}(\partial\Omega,F_0,P_0)
     \end{matrix}
     \lra
     \begin{matrix}
       H^{s-\mu}(\Omega,E_0)\\ \oplus\\ H^{s-\mu}(\partial\Omega,F_1,P_1)
     \end{matrix},
     \qquad s>d-\frac{1}{2}, 
\end{equation}
according to \eqref{eq:cont_ext1}. With $P_j$ also the principal symbols $\sigma^0_\psi(P_j)$ are projections 
$($as bundle morphisms$)$ and thus define a subbundle $F_j(P_j)$ of $\pi_\pO^*F_j$. We then set 
 $$\sigma^\mu(\scrA;P_0,P_1):=\left(\sigma_\psi^\mu(\scrA),\sigma_\partial^\mu(\scrA;P_0,P_1)\right)$$
with $\sigma_\partial^\mu(\scrA;P_0,P_1)$ being the principal boundary symbol of $\scrA$ considered as a map 
\begin{equation}\label{eq:cont_ext4}
 \begin{matrix}\pi_\pO^*(H^s(\rz_+)\otimes E_0)\\ \oplus\\ F_0(P_0)\end{matrix}
  \lra
 \begin{matrix}\pi_\pO^*(H^{s-\mu}(\rz_+)\otimes E_1)\\ \oplus\\ F_1(P_1)\end{matrix},
 \qquad s>d-\frac{1}{2}, 
\end{equation}
$($or, alternatively, replacing the Sobolev spaces by $\scrS(\rz_+))$. 

\begin{definition}
$\scrA\in \scrB^{\mu,d}(\Omega;(E_0,F_0;P_0),(E_1,F_1;P_1))$ is called elliptic if both components of the 
principal symbol $\sigma^\mu(\scrA;P_0,P_1)$ are isomorphisms.\footnote{Invertibility of the principal boundary 
symbol as a map \eqref{eq:cont_ext4} is independent of the choice of $s$ and, equivalently, one may replace the 
Sobolev spaces by $\scrS(\rz_+)$.} 
\end{definition}

The following result is the main theorem of elliptic theory of Toeplitz type operators. For details see Section 2.1 of 
\cite{Schu37} and Theorem 6.1 of \cite{Seil}. 

\begin{theorem}\label{thm:main_toeplitz}
For $\scrA_0\in\scrB^{\mu,d}(\Omega;(E_0,F_0;P_0),(E_1,F_1;P_1))$ the following statements are equivalent$:$
\begin{enumerate}
 \item $\scrA_0$ is elliptic. 
 \item There exists an $s>\max(\mu,d)-1/2$ such that the map \eqref{eq:cont_ext3} associated with $\scrA_0$ is 
  Fredholm. 
 \item For every $s>\max(\mu,d)-1/2$ the map \eqref{eq:cont_ext3} associated with $\scrA_0$ is Fredholm. 
 \item There is an $\scrA_1\in \scrB^{-\mu_0,\max(d-\mu,0)}(\Omega;(E_1,F_1;P_1),(E_0,F_0;P_0))$  
  such that 
  \begin{align*}
   \scrA_1\scrA_0-\scrP_0 &\in \scrB^{-\infty,\max(\mu,d)}(\Omega;(E_0,F_0;P_0),(E_0,F_0;P_0)),\\
   \scrA_0\scrA_1-\scrP_1 &\in \scrB^{-\infty,\max(d-\mu,0)}(\Omega;(E_1,F_1;P_1),(E_1,F_1;P_1)).
  \end{align*}
\end{enumerate}
Any such operator $\scrA_1$ is called a parametrix of $\scrA_0$.  
\end{theorem}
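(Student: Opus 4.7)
The plan is to prove the equivalences by a standard cycle, with the hard work concentrated in $(1)\Rightarrow(4)$ and $(2)\Rightarrow(1)$; the rest follows from mapping properties of the calculus.

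First, I would dispose of the easy implications. The implication $(3)\Rightarrow(2)$ is trivial. For $(4)\Rightarrow(3)$, I would use that a remainder in $\scrB^{-\infty,\max(\mu,d)}$ induces a compact map on the relevant Sobolev level (since operators of order $-\infty$ and finite type map boundedly into smaller Sobolev scales on a compact manifold, giving compactness by Rellich), and commutes with $\scrP_0$ on the Toeplitz range. Thus $\scrA_1\scrA_0=\scrP_0+\text{compact}$ and $\scrA_0\scrA_1=\scrP_1+\text{compact}$; because $\scrP_0,\scrP_1$ act as the identity on the respective projected Sobolev spaces appearing in \eqref{eq:cont_ext3}, the operator $\scrA_0$ has left and right inverses modulo compacts on those spaces, hence is Fredholm.

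The implication $(2)\Rightarrow(1)$ is the classical converse of elliptic regularity. I would argue by contradiction: if $\sigma_\psi^\mu(\scrA_0)$ fails to be injective at some cotangent vector $(x_0,\xi_0)$ with $x_0$ in the interior, one constructs a sequence of microlocally concentrated wave packets supported away from $\partial\Omega$ on which $\scrA_0$ contracts arbitrarily, violating the Fredholm estimate $\|u\|\le C(\|\scrA_0 u\|+\|Ku\|)$ with $K$ compact. If $\sigma_\partial^\mu(\scrA_0;P_0,P_1)$ fails to be injective/surjective at some boundary cotangent vector, one builds analogous sequences concentrated near $\partial\Omega$ using the $\scrS(\rz_+)$-valued model symbols and the projected bundles $F_j(P_j)$, noting that the failure of the boundary symbol on the Toeplitz-projected bundles can be lifted to a failure of the full map on the Toeplitz Sobolev subspace.

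The main implication is $(1)\Rightarrow(4)$, and this is where the projection structure introduces a genuine complication beyond the standard Boutet de Monvel parametrix construction. Given invertibility of $\sigma^\mu(\scrA_0;P_0,P_1)$, I would first invert $\sigma_\psi^\mu(\scrA_0)$ pointwise on the interior and, separately, invert $\sigma_\partial^\mu(\scrA_0;P_0,P_1)$ as a family of maps between the projected model spaces. The extension of this family to the whole bundle $\pi_\pO^*F_0\to \pi_\pO^*F_1$ is done by declaring it zero on the complementary ranges of $(1-\sigma_\psi^0(P_j))$, which defines a valid Boutet de Monvel principal symbol of order $-\mu$. An operator $\scrB$ in the ordinary algebra realizing this symbol is then obtained by the usual quantization/gluing procedure. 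Setting $\scrA_1^{(0)}:=\scrP_0\scrB\scrP_1$ places the operator in the Toeplitz subalgebra and yields $\scrA_1^{(0)}\scrA_0-\scrP_0\in\scrB^{-1,\max(\mu,d)}$. Neumann-series-type iteration in the calculus, followed by an asymptotic summation, improves the order of the remainder to $-\infty$ while preserving the Toeplitz constraint; symmetric treatment on the other side produces a genuine two-sided parametrix, invoking the abstract framework of \cite{Seil} to streamline the bookkeeping of orders and types.

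The hard part is step $(1)\Rightarrow(4)$, and specifically the interplay between the projections $P_0,P_1$ and the iterative improvement of the parametrix: every correction must be sandwiched with $\scrP_0$ and $\scrP_1$ so that the resulting operator stays in $\scrB^{\mu,d}(\Omega;(E_1,F_1;P_1),(E_0,F_0;P_0))$, and one must verify that this sandwiching does not destroy the order reduction at each step. The type bookkeeping $d=\max(d_0,d_1+\mu_0)$ also needs to be tracked carefully so that the orders of the final remainders land in $\max(\mu,d)$ and $\max(d-\mu,0)$ as claimed.
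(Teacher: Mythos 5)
The paper does not actually prove this theorem: it is stated as ``the main theorem of elliptic theory of Toeplitz type operators'' and the proof is delegated to Section~2.1 of \cite{Schu37} and Theorem~6.1 of \cite{Seil}. There is therefore no internal argument to compare your proposal against, and anything I say about the fidelity of your route is relative to those references rather than to the paper itself.

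That said, your outline is a reasonable sketch of what a self-contained proof would look like, and the two hard steps you identify, $(2)\Rightarrow(1)$ and $(1)\Rightarrow(4)$, are indeed the substantial ones. A few points that would need genuine work before this becomes a proof. In $(1)\Rightarrow(4)$ you propose to invert the projected boundary symbol on $F_0(P_0)\to F_1(P_1)$ and extend it by zero on the complementary ranges; this produces a bundle morphism, but you still need to check that the resulting block matrix is a \emph{boundary symbol} in the Boutet de Monvel sense (i.e.\ that the $\scrS(\rz_+)$-blocks have the Wiener--Hopf/Green structure of the calculus), which is not automatic from pointwise inversion of an abstract family of Banach space operators; the published proofs handle this either through the special structure of the boundary symbol algebra (Schulze) or through an abstract axiomatic framework for Toeplitz subalgebras (Seiler). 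You also gloss over the type bookkeeping: the parametrix is claimed to have type $\max(d-\mu,0)$ and the two remainders land in types $\max(\mu,d)$ and $\max(d-\mu,0)$, and verifying that the Neumann iteration and the asymptotic summation respect this is a nontrivial part of the argument rather than a formality. In $(2)\Rightarrow(1)$ the wave-packet contradiction for the interior symbol is standard, but for the boundary symbol the statement that ``failure on the Toeplitz-projected bundles can be lifted to failure on the Toeplitz Sobolev subspace'' is precisely the content that needs to be established; one must quantize a would-be singular direction of the projected boundary symbol into a family of test sections that actually lie in the projected Sobolev space $H^s(\pO,F_0,P_0)$, and the projections $P_j$ are nonlocal, so this requires more care than in the unprojected case. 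Finally, note a small typo in the statement you are proving (it appears in the paper as well): the order of $\scrA_1$ should read $-\mu$, not $-\mu_0$.

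In summary: your plan is a sensible reconstruction of the standard elliptic-theory cycle, but it is a sketch with two real gaps — the structural admissibility of the extended inverse boundary symbol as a Boutet de Monvel boundary symbol, and the precise order/type bookkeeping under the Toeplitz sandwiching — and it cannot be compared to a proof in the paper because the paper supplies none.
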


\section{Realizations subject to APS-type boundary conditions}\label{sec:real}

In this section we shall study certain closed extensions of unbounded operators of the form 
 $$A_++G:\scrC^\infty(\Omega,E)\subset L_2(\Omega,E)\lra L_2(\Omega,E)$$
with $A_++G\in B^{d,d}(\Omega;E,E):=\scrB^{d,d}(\Omega;(E,0;1),(E,0;1))$, 
subject to $($a vector of$)$ boundary conditions of APS-type, 
which we shall describe in the following subsection. Our results extend those of Sections 1.4 and 1.6 of 
\cite{Grub}; for convenience of the reader we shall employ similar notation.  

\subsection{APS-type boundary conditions}\label{sec:real01}

Let $d\in\nz$ be a positive integer and let $\partial/\partial\nu$ denote the derivative in direction of the outer 
normal to $\partial\Omega$. We define, for $s>d+j-\frac{1}{2}$, 
 $$\gamma_j:H^s(\Omega,E)\to H^{s-j-\frac{1}{2}}(\pO,E|_{\pO}),\qquad 
     u\mapsto \frac{\partial^ju}{\partial\nu^j}\Big|_{\pO},$$
and $\rho=(\gamma_0,\ldots,\gamma_{d-1})^t$, where $E|_{\pO}$ indicates the restriction of the bundle 
$E$ to the boundary. Moreover,  
\begin{equation}\label{eq:bc}
  T_j=\sum_{k=0}^{d-1} S_{jk}\gamma_k+T^\prime_j:H^{s}(\Omega,E)\lra 
  H^{s-j-\frac{1}{2}}(\pO,F_j), 
\end{equation}
with vector bundles $F_j$ over $\partial\Omega$ $($possibly zero-dimensional$)$, pseudodifferential operators 
$S_{jk}\in L^{j-k}_\cl(\partial\Omega;E|_{\pO},F_j)$ and trace operators $T_j^\prime$ of order $j+1/2$ 
and type 0. We write $T^\prime=(T_0^\prime,\ldots,T_{d-1}^\prime)^t$ and further introduce 
\begin{align*}
  \scrH^s(\pO,E)&=\mathop{\oplus}_{j=0}^{d-1} H^{s+d-j-\frac{1}{2}}(\pO,E|_{\partial\Omega}),\\ 
  \scrH^s(\pO,F)&=\mathop{\oplus}_{j=0}^{d-1} H^{s+d-j-\frac{1}{2}}(\pO,F_k).
\end{align*}

\begin{definition}\label{def:aps}
Using the previously introduced notation, an APS-type boundary condition $T$ is an operator of the form
 $$T=P(S\rho+T^\prime):H^{s}(\Omega,E)\lra \scrH^{s-d}(\partial\Omega,F),\qquad s>d-1/2,$$ 
where $S=(S_{jk})_{0\le j,k\le d-1}$ and a projection $($i.e., idempotent$)$  
 $$P=(P_{jk})_{0\le j,k\le d-1}\quad\text{with}\quad P_{jk}\in L^{j-k}_\cl(\partial\Omega;F_k,F_j).$$
\end{definition}

To give an example, 
let $B_j$ be a pseudodifferential operator of integer order $0\le j<d$ on the double of $\Omega$ satisfying 
the transmission condition with respect to $\pO$ and $T_j:=\gamma_0\circ B_{j,+}$. Then $T_j$ is as in 
\eqref{eq:bc}, even with $S_{jk}=0$ for $k>j$ and all $S_{jj}$ are zero-order differential operators, i.e., 
induced by a bundle homomorphism $s_{j}:E|_{\pO}\to F_j$. Hence, $T=S\rho+T^\prime$ with a left-lower 
triangular matrix $S$ whose diagonal elements are zero-order differential. 

The classical Atiyah-Patodi-Singer conditions are included in this setting by taking $d=1$, $T^\prime=0$ and $S$
equal to the identity in Definition \ref{def:aps}. 

\begin{definition}\label{def:aps2}
Let $A_++G\in B^{d,d}(\Omega;E,E)$ and $T$ be an APS-type boundary con\-dition as described above. 
We write $(A_++G)_T$ for the operator acting like $A_++G$ on the domain 
 $$\mathrm{dom}((A_++G)_T)=\Big\{u\in H^d(\Omega,E) \mid Tu=0\Big\}.$$ 
\end{definition}

The operator $(A_++G)_T$ is often called the realization of $A_++G$ subject to the boundary condition $T$. 
We call two boundary conditions $T_0$ and $T_1$ equivalent, if they have the same kernel as maps on 
$H^d(\Omega,E)$; then, obviously, $(A_++G)_{T_0}=(A_++G)_{T_1}$. 

\subsection{Elliptic and normal realizations}\label{sec:real02}

Now let $\Lambda=\mathrm{diag}(\Lambda_0,\ldots,\Lambda_{d-1})$ be a $(d\times d)$-diagonal matrix 
with invertible components $\Lambda_j\in L^{d-j-\frac{1}{2}}_\cl(\partial\Omega;F_j,F_j)$. Note that then 
 $$P_\Lambda:=\Lambda P\Lambda^{-1}\in L^0_\cl(\pO;F_\partial,F_\partial), \qquad 
     F_\partial:=F_0\oplus\ldots\oplus F_{d-1},$$
is a zero order projection. 

\begin{definition}\label{def:ell+norm}
Consider the realization $(A_++G)_T$ with $T=P(S\rho+T^\prime)$. 
\begin{enumerate}
 \item The realization is called elliptic if 
   $$\begin{pmatrix}A_++G\\ \Lambda T\end{pmatrix}
       =\begin{pmatrix}1&0\\0&P_\Lambda\end{pmatrix}
       \begin{pmatrix}A_++G\\ \Lambda(S\rho+T^\prime)\end{pmatrix}$$ 
  is an elliptic element in $\scrB^{d,d}(\Omega;(E,0;1),(E,F_\partial;P_\Lambda))$. 
 \item The boundary condition $T$ is called normal if there exists a matrix 
   $$R=(R_{jk})_{0\le j,k\le d-1},\qquad R_{jk}\in L^{j-k}_\cl(\pO;F_k,E|_{\pO}),$$
  such that $PSR=P$. As way of speaking, we occasionally will call $R$ the right-inverse of $PS$. 
\end{enumerate}
\end{definition}

Note that ellipticity of $(A_++G)_T$ is equivalent to the Fredholm property of 
 $$\begin{pmatrix}A_++G\\ T\end{pmatrix}:
     H^s(\Omega,E)\lra
     \begin{matrix}H^{s-d}(\Omega,E)\\ \oplus\\ \scrH^{s-d}(\pO,F,P)\end{matrix}$$
for some $($and then for all$)$ $s>d-1/2$, where, by definition, 
 $$\scrH^{s}(\pO,F,P)=P\big(\scrH^{s}(\pO,F)\big).$$ 
By abstract and well-known results on Fredholm operators (see, for example, Theorem 8.3 in \cite{CSS}), 
this in turn is equivalent to the Fredholm property of 
 $$(A_++G)_T:\mathrm{dom}((A_++G)_T)\to L^2(\Omega,E)$$ 
together with the finiteness of the codimension of $T(H^d(\Omega,E))$ in $\scrH^{0}(\pO,F,P)$. 

It is useful to observe that realizations with a normal boundary condition can be represented in a certain 
canonical form: If $T=P(S\rho+T^\prime)$ is normal as in Definition \ref{def:ell+norm}, then 
$\wt{T}:=RT$ is a boundary condition equivalent to $T$ in view of the injectivity of $R$ on the range of $P$. 
Moreover, 
\begin{equation}\label{eq:canonical}
  \wt{T}=\wt{P}(\rho+\wt{T}^\prime),\qquad \wt{P}=RPS,\qquad \wt{T}^\prime=RT^\prime,
\end{equation}
where $\wt{P}$ is a projection with components $\wt{P}_{jk}\in L^{j-k}_\cl(\pO;E|_{\pO},E|_{\pO})$ and 
trace operators $\wt{T}^\prime_j:H^{s}(\Omega,E)\to H^{s-j-\frac{1}{2}}(\pO,E|_{\pO})$  of order 
$j+1/2$ and type 0. 

\begin{lemma}\label{lem:surj}
A normal boundary condition $T=P(S\rho+T^\prime)$ induces surjective maps 
$H^s(\Omega,E)\to\scrH^{s-d}(\pO,F,P)$, $s>d-1/2$. 
\end{lemma}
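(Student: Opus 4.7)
The plan is to reduce to the canonical form~\eqref{eq:canonical} and then construct an exact Poisson right-inverse within Boutet's calculus.

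First I reduce to the canonical form. With $\wt T := RT = \wt P(\rho + \wt T')$ and $\wt P = RPS$ as in~\eqref{eq:canonical}, the identity $PSR = P$ forces $R|_{\mathrm{range}(P)}$ to be injective (from $\varphi = P\varphi$ and $R\varphi = 0$ one gets $\varphi = PSR\varphi = 0$), while $\wt P R = RPSR = RP$ gives $R(\mathrm{range}(P)) = \mathrm{range}(\wt P)$. Thus $R$ is a bijection between these two spaces, and surjectivity of $T$ onto $\scrH^{s-d}(\pO,F,P)$ is equivalent to surjectivity of $\wt T$ onto $\scrH^{s-d}(\pO,E|_{\pO},\wt P)$. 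Without loss of generality assume $T = \wt P(\rho + \wt T')$ is in canonical form. Then for $\varphi = \wt P\varphi$, any $u$ with $(\rho + \wt T')u = \varphi$ satisfies $Tu = \wt P\varphi = \varphi$, so the task reduces to showing that the (non-Toeplitz) trace operator $\rho + \wt T' : H^s(\Omega,E) \to \scrH^{s-d}(\pO,E|_{\pO})$ is surjective.

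Next I construct a Poisson right-inverse. Start with a Poisson right-inverse $K_0$ of $\rho$ in Boutet's calculus (the classical surjective-trace theorem realized as a Poisson operator), so $\rho K_0 = \mathrm{id}$. Then $(\rho + \wt T')K_0 = \mathrm{id} + \wt T' K_0$, where $\wt T' K_0$ is a matrix of boundary pseudodifferential operators of $\Lambda$-graded order zero. The plan is to adjust $K_0$ by a ``flat'' Poisson $M$ (with $\rho M = 0$) to obtain $K := K_0 + M$ satisfying both $\rho K = \mathrm{id}$ and $\wt T' K = 0$; this amounts to solving $\wt T' M = -\wt T' K_0$ with $M$ flat, which is possible because compositions of type-$0$ traces with flat Poissons realize all boundary pseudodifferential operators of the appropriate order. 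The latter is verified by a local symbol computation in boundary charts and globalized via a partition of unity. The preimage is then $u := K\varphi$, which satisfies $(\rho + \wt T')u = \varphi$.

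The main difficulty is making the right-inverse \emph{exact} rather than only a parametrix modulo smoothing (the latter would follow directly from Theorem~\ref{thm:main_toeplitz}). This is what forces the explicit use of the flexibility of type-$0$ trace operators in Boutet's calculus and parallels the analogous surjectivity result for normal boundary operators in Grubb~\cite{Grub}, transferred to the Toeplitz setting through the canonical-form reduction.
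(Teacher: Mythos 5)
Your reduction to the canonical form and then to the surjectivity of $\rho+\wt{T}^\prime:H^s(\Omega,E)\to\scrH^{s-d}(\pO,E)$ matches the paper's first two lines essentially verbatim: the paper writes $T=PS(\rho+RT^\prime)$, invokes Proposition~1.6.5 of Grubb for the surjectivity of $\rho+RT^\prime$, and then notes that $PS$ is surjective onto $\mathrm{range}(P)$ because $PSR=P$. Where you diverge is that, instead of citing Grubb, you try to manufacture an \emph{exact} Poisson right-inverse of $\rho+\wt{T}^\prime$, and this is where the argument has a real gap.

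The pivotal claim — that one can solve $\wt{T}^\prime M=-\wt{T}^\prime K_0$ with a flat Poisson $M$ because ``compositions of type-$0$ traces with flat Poissons realize all boundary pseudodifferential operators of the appropriate order'' — is both misdirected and unjustified. It is misdirected because what is needed is not that the class $\{T^\prime M : T^\prime\ \text{type-}0,\ M\ \text{flat}\}$ exhausts boundary ps.d.o.'s, but that the \emph{fixed} operator $M\mapsto\wt{T}^\prime M$ hits the \emph{specific} target $-\wt{T}^\prime K_0$; the map $M\mapsto\wt{T}^\prime M$ is certainly not surjective onto all boundary ps.d.o.'s for a general $\wt{T}^\prime$ (take $\wt{T}^\prime=0$, or $\wt{T}^\prime$ of finite rank). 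It is unjustified because a ``local symbol computation in boundary charts, globalized via a partition of unity'' produces, by construction, an identity only modulo smoothing remainders — exactly the parametrix you say you want to avoid. You flag the exactness issue yourself as ``the main difficulty'', but the mechanism you propose does not overcome it: nothing in the symbolic construction prevents a nonzero smoothing error, and removing that error requires an additional Fredholm/cokernel argument (show the finite-dimensional cokernel of $\rho+\wt{T}^\prime$ is trivial using elliptic regularity and the surjectivity of $\rho$ alone) which is absent here. The actual proof of the surjectivity of $\rho+\wt{T}^\prime$ in Grubb uses a different device altogether: choosing a right-inverse $K$ of $\rho$ supported so close to the boundary that $1+K\wt{T}^\prime$ becomes a Neumann-series invertible operator on $H^s(\Omega,E)$ simultaneously for all $s$ (cf.\ the paper's use of Lemma~1.6.8 of Grubb elsewhere), which then yields $\rho+\wt{T}^\prime=\rho(1+K\wt{T}^\prime)$ and hence exact surjectivity. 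That contraction-via-support-shrinking idea is the ingredient your proposal is missing.
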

\begin{proof}
With the previously introduced notation, $T=PS(\rho+R{T}^\prime)$. 
By Proposition 1.6.5 of \cite{Grub} we know that 
$\rho+R{T}^\prime:H^s(\Omega,E)\to\scrH^{s-d}(\pO,E)$ is surjective. It remains to observe that 
$PS:\scrH^{s-d}(\pO,E)\to\scrH^{s-d}(\pO,F,P)$ surjectively, due to the existence of $R$ with $PSR=P$. 
\end{proof}

\begin{lemma}
Let $T=P(S\rho+T^\prime)$ be a normal boundary condition and $\wt{T}=\wt{P}(\rho+\wt{T}^\prime)$ 
associated with $T$ 
as in $\mathrm{\eqref{eq:canonical}}$. Then 
 $$\scrH^s(\pO,E,\wt{P})=R\big(\scrH^s(\pO,F,P)\big).$$
In particular$:$ The canonical form of a normal, elliptic realization is elliptic. 
\end{lemma}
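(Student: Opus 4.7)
My plan is to establish the set identity $\scrH^s(\pO,E,\wt{P})=R\big(\scrH^s(\pO,F,P)\big)$ by direct algebraic manipulation of the defining relation $PSR=P$ together with $\wt{P}=RPS$, and then to leverage the resulting bijection to transfer ellipticity from the original realization to the canonical form by a composition argument.

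For the inclusion $\scrH^s(\pO,E,\wt{P})\subseteq R\big(\scrH^s(\pO,F,P)\big)$, I would take $v\in\scrH^s(\pO,E,\wt{P})$, so that $v=\wt{P}v=RPSv$; setting $u:=PSv$ and using $P^2=P$ gives $Pu=u$ and $v=Ru$. The reverse inclusion is equally direct: if $Pu=u$, then $\wt{P}(Ru)=RPSRu=RPu=Ru$ in view of $PSR=P$, so $Ru\in\scrH^s(\pO,E,\wt{P})$. The same algebra shows that $R$ and $PS$ are mutually inverse topological isomorphisms between $\scrH^s(\pO,F,P)$ and $\scrH^s(\pO,E,\wt{P})$.

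For the ellipticity statement I would write the canonical form as the composition
$$\begin{pmatrix}A_++G\\ \wt{T}\end{pmatrix}=\begin{pmatrix}1 & 0\\ 0 & R\end{pmatrix}\begin{pmatrix}A_++G\\ T\end{pmatrix}.$$
By the previous step, the left factor is an isomorphism from $H^{s-d}(\Omega,E)\oplus\scrH^{s-d}(\pO,F,P)$ onto $H^{s-d}(\Omega,E)\oplus\scrH^{s-d}(\pO,E,\wt{P})$. The same identities, applied pointwise at the level of principal symbols, show that $\sigma(R)$ induces a fibrewise bundle isomorphism $F_\partial(P_\Lambda)\to (E|_{\pO})_\partial(\wt{P}_\Lambda)$, where $\wt{P}_\Lambda:=\Lambda\wt{P}\Lambda^{-1}$; since the components of $\wt{P}=RPS$ still have orders $j-k$, the same diagonal $\Lambda$ used for $P$ renders $\wt{P}_\Lambda$ a zero-order projection, placing the canonical form in $\scrB^{d,d}(\Omega;(E,0;1),(E,E|_{\pO};\wt{P}_\Lambda))$. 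Invertibility of both components of the principal symbol is then inherited from that of the original elliptic realization; equivalently, Fredholmness transfers through the isomorphism and Theorem \ref{thm:main_toeplitz} delivers the same conclusion.

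The main obstacle I anticipate is administrative rather than analytic: one has to verify that $R$, $PS$, $\wt{P}$, and $\wt{P}_\Lambda$ all sit in the correct Toeplitz-type subclass of Boutet de Monvel's calculus, so that the composition above really takes place inside $\scrB^{d,d}$ and the symbol-level argument is legitimate. Once these orders and projections are lined up, the single identity $PSR=P$ does all the work at both the Sobolev-space level and the principal-symbol level, and the conclusion is formal.
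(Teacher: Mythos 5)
Your proof is correct and takes a genuinely cleaner route for the first claim. Where the paper shows
$\scrH^s(\pO,E,\wt P)=\wt P\bigl(\scrH^s(\pO,E)\bigr)=RPS\bigl(\scrH^s(\pO,E)\bigr)=RPS\rho\bigl(H^{s+d}(\Omega,E)\bigr)=R\bigl(\scrH^s(\pO,F,P)\bigr)$,
detouring through the trace map $\rho$ and invoking Lemma \ref{lem:surj} with $T'=0$, you get the same identity by pure projection algebra from $PSR=P$ and $\wt P=RPS$, and moreover obtain for free that $R$ and $PS$ are mutually inverse on the subspaces in question --- something the paper does not state explicitly. This is a more elementary argument and does not use surjectivity of $\rho$ at all. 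For the ellipticity claim, the paper simply observes that $(A_++G)_T$ and $(A_++G)_{\wt T}$ are the same operator (equivalent boundary conditions), that $\wt T$ maps onto $\scrH^0(\pO,E,\wt P)$, and then reads off ellipticity from the Fredholm characterization given after Definition \ref{def:ell+norm}; your version composes the Boutet de Monvel column with the order-matched diagonal isomorphism $\mathrm{diag}(1,R)$ and invokes Theorem \ref{thm:main_toeplitz}, which is equivalent but says more about where the isomorphism lives in the calculus. One small slip in your write-up: you say ``the same diagonal $\Lambda$ used for $P$ renders $\wt P_\Lambda$ a zero-order projection,'' but the order-reducing matrix for $P$ has entries $\Lambda_j\in L^{d-j-1/2}_\cl(\pO;F_j,F_j)$, whereas $\wt P$ acts on copies of $E|_{\pO}$; one must instead pick a new diagonal $\Lambda'$ with entries in $L^{d-j-1/2}_\cl(\pO;E|_\pO,E|_\pO)$. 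This is cosmetic and does not affect the argument, since such $\Lambda'$ always exists and the composition then takes place in $\scrB^{d,d}\bigl(\Omega;(E,0;1),(E,E|_\pO^{\,\oplus d};\wt P_{\Lambda'})\bigr)$ as you intend.
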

\begin{proof}
Applying the previous Lemma \ref{lem:surj} with $T^\prime=0$, we obtain 
 \begin{align*}
    \wt{P}\big(\scrH^s(\pO,E)\big)&=RPS\big(\scrH^{s}(\pO,E)\big)=RPS\rho\big(H^{s+d}(\Omega,E)\big)\\
    &=R\big(\scrH^s(\pO,F,P)\big).
 \end{align*}
This shows the first claim and that $\wt{T}=RT:H^s(\Omega,E)\to\scrH^{s-d}(\pO,F,\wt{P})$ surjectively. Thus 
the ellipticity follows from the relation with the Fredholm property of the realization, described after Definition 
\ref{def:ell+norm}. 
\end{proof}

\subsection{Key properties of realizations}\label{sec:real03}

In this section we are going to investigate compositions and adjoints of realizations. First, let us observe that 
normal realizations are always densily defined. In fact, writing $T=P(S\rho+T^\prime)=PS(\rho+\wt{T}^\prime)$, 
we see that the kernel of $T$ on $H^d(\Omega,E)$ contains the kernel of $\rho+\wt{T}^\prime$; this kernel, 
however, is known to be dense in $L_2(\Omega,E)$, cf.\ Lemma 1.6.8 of \cite{Grub}. 

\begin{theorem}\label{thm:comp}
Let $B_j:=(A_{j,+}+G_j)_{T_j}$, $j=0,1$, be two realizations of order $d_j$ subject to APS-type boundary 
conditions $T_j=P_j(S_j\rho+T^\prime_j)$. Moreover, let 
 $$A=A_1A_0,\qquad G=(A_{1,+}+G_1)(A_{0,+}+G_0)-A_+,$$
and define the boundary condition 
 $$T:=\begin{pmatrix}T_1\\ T_0(A_{1,+}+G_1)\end{pmatrix}.$$
Then the following statements are valid$:$
\begin{enumerate}
 \item If $B_0$ is elliptic, then $B_1B_0=(A_++G)_T$. 
 \item If both $B_0$ and $B_1$ are elliptic, then so is $B_1B_0$. 
 \item If both $T_0$ and $T_1$ are normal  $($and $R_j$ denotes the right-inverse of $P_jS_j)$, then 
  the boundary condition $\displaystyle\wt{T}:=\begin{pmatrix}R_1T_1\\ R_0T_0(A_{1,+}+G_1)\end{pmatrix}$
  is normal and equivalent to $T$. 
\end{enumerate}
\end{theorem}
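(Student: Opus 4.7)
The plan is to handle the three claims sequentially; in all of them the core tool is the composition rule together with the elliptic theory of the Toeplitz-type Boutet de Monvel calculus from Section~\ref{sec:BdM}. For (1), I would compare the two domains directly: unwinding the definition of the composition of unbounded operators, $\mathrm{dom}(B_1B_0)$ consists of those $u\in H^{d_0}(\Omega,E)$ annihilated by the appropriate first boundary condition and whose image under the corresponding action lies in $\mathrm{dom}(B_1)$. The boundary operator $T$ in the theorem is precisely the conjunction of these two conditions, so the real task is to upgrade the a priori regularity of such $u$ from $H^{d_0}(\Omega,E)$ to $H^{d_0+d_1}(\Omega,E)$. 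This I would achieve by applying a parametrix of the elliptic column operator attached to $B_0$, provided by Theorem~\ref{thm:main_toeplitz}(iv); the smoothing remainder is harmless and supplies the missing regularity. Equality of the actions on the common domain then reduces to the defining identity $A_++G=(A_{1,+}+G_1)(A_{0,+}+G_0)$ and the composition rule recalled in Section~\ref{sec:BdM02}.

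For (2), I would exhibit the augmented column operator
$$\mathcal{A}:=\begin{pmatrix}A_++G\\ T\end{pmatrix}$$
as a composition inside the Toeplitz-type calculus, one factor being the column operator attached to $B_0$ and the other a block matrix built from $(A_{1,+}+G_1)$, $T_1$ and an identity block, so that the resulting Toeplitz class carries the projection built from $1$, $P_0$ and $P_1$. Multiplicativity of the principal symbol under such compositions, in both its interior and its boundary component, together with the invertibility of each factor by hypothesis, yields invertibility of $\sigma^{d_0+d_1}(\mathcal{A};1,P)$; the ellipticity of $B_1B_0$ then follows from Theorem~\ref{thm:main_toeplitz}.

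For (3), the equivalence $T\sim\widetilde{T}$ reduces to injectivity of each $R_j$ on the range of $P_j$: from $P_jS_jR_j=P_j$ one gets that $R_jv=0$ together with $P_jv=v$ forces $v=P_jS_jR_jv=0$, and since each row of $T$ takes values in the range of the corresponding projection, passing to $R_jT_j$ preserves kernels. Normality of $\widetilde{T}$ itself is verified row by row: the first row is in the canonical form \eqref{eq:canonical} with projection $\widetilde{P}_1:=R_1P_1S_1$. For the composite row $R_0T_0(A_{1,+}+G_1)$ I would use Boutet de Monvel's calculus to rewrite each $\gamma_\ell\circ(A_{1,+}+G_1)$ as $\sum_m S''_{\ell m}\gamma_m+T''_\ell$ with pseudodifferential entries $S''_{\ell m}$ of the appropriate orders on $\pO$ and a trace operator $T''_\ell$ of type $0$; substitution brings the full $\widetilde{T}$ into the shape $\widetilde{P}(\widetilde{S}\rho+\widetilde{T}')$, and the right-inverse $\widetilde{R}$ is assembled block-wise from $R_0$, $R_1$ and a parametrix of $(A_{1,+}+G_1)$ in Boutet de Monvel's calculus. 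The main technical obstacle lies precisely here: the bookkeeping of orders, types and projections through the composition rules must be carried out carefully enough to guarantee that the rewritten $\widetilde{T}$ and its right-inverse truly fit Definition~\ref{def:aps} and that the assembled $\widetilde{P}$ is indeed idempotent.
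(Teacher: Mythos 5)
Your approach to (1) and (2) is essentially the approach the paper leaves implicit: the paper simply observes that the trivial-projection case is Grubb's Theorem 1.4.6 and that the same argument goes through in the Toeplitz-type setting. Your sketch of (1) --- bootstrapping the regularity of $u\in\mathrm{dom}(B_1B_0)$ from $H^{d_0}$ to $H^{d_0+d_1}$ via a parametrix of the elliptic column operator attached to $B_0$ --- and your sketch of (2) --- multiplicativity of principal symbols for compositions in the Toeplitz calculus --- are reasonable unpackings of exactly that argument, and (2)'s use of a parametrix is legitimate because ellipticity of $B_0$ is assumed there.

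Where your proposal goes astray is in the final step of (3). Statement (3) assumes only normality of $T_0$ and $T_1$, \emph{not} ellipticity of either realization, yet you propose to assemble the right-inverse $\widetilde{R}$ of $\widetilde{P}\widetilde{S}$ ``block-wise from $R_0$, $R_1$ and a parametrix of $(A_{1,+}+G_1)$ in Boutet de Monvel's calculus.'' Such a parametrix need not exist under the hypotheses of (3), and in any case it is the wrong tool: normality is a structural property of the trace operator, and the right-inverse is built from the (surjective) leading coefficients of the rewritten $S''$-matrix --- essentially from inverting the principal symbol of $A_1$ in the normal codirection --- not from a global two-sided inverse of $A_{1,+}+G_1$ modulo smoothing. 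The paper avoids this issue entirely by putting $\widetilde T$ in the block form $\mathrm{diag}(\widetilde P_1,\widetilde P_0)\left(\begin{smallmatrix}\rho+\widetilde T_1'\\ (\rho+\widetilde T_0')(A_{1,+}+G_1)\end{smallmatrix}\right)$ and invoking Grubb's Theorem 1.4.6 once more to conclude that the composite row $(\rho+\widetilde T_0')(A_{1,+}+G_1)$ is itself a normal trace operator of the form $S\rho+T'$; normality of the full $\widetilde T$ then follows from the diagonal projection structure. If you insist on re-deriving this rather than citing Grubb, you must replace the parametrix step by the purely symbolic inversion of the leading normal coefficient, and you should state where (if at all) a non-characteristic hypothesis enters. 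Your equivalence argument for (3) is fine: $R_j$ is indeed injective on $\mathrm{im}\,P_j$ because $v=P_jv$ and $R_jv=0$ give $v=P_jS_jR_jv=0$, and each component of $T$ takes values in the corresponding range.
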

\begin{proof}
The case of trivial projections, $P_0=1$ and $P_1=1$, is Theorem 1.4.6 of \cite{Grub}. For (1) and (2) the same 
proof works also in the general case. Concerning (3), it is clear that $\wt{T}$ is equivalent to $T$, due to the 
injectiveness of  $\mathrm{diag}(R_1,R_0)$. Moreover, 
 $$\displaystyle\wt{T}=\begin{pmatrix}\wt{P}_1&0\\0&\wt{P}_0\end{pmatrix}
     \begin{pmatrix}\rho+\wt{T}^\prime_1\\ (\rho+\wt{T}^\prime_0)(A_{1,+}+G_1)\end{pmatrix}$$
with $\wt{P}_j=R_jP_jS_j$ and $\wt{T}^\prime_j=R_j{T}^\prime_j$. According to Theorem 1.4.6 of \cite{Grub}, 
$(\rho+\wt{T}^\prime_0)(A_{1,+}+G_1)$ is a normal boundary condition 
of the form ${S}\rho+T^\prime$. This yields the normality of $\wt{T}$. 
\end{proof}

Let us now turn to the analysis of adjoints. First recall Green's formula 
$($for details see Section 1.3 of \cite{Grub}, for example$)$: 
If $A\in L^d(2\Omega,2E)$ has the transmission 
property with respect to $\pO$, then there exists a matrix 
 $$\frakA=(\frakA_{jk})_{0\le j,k\le d-1},\qquad \frakA_{jk}\in L^{d-1-j-k}_\cl(\pO,E|_{\pO}),$$
whose components are differential operators $($in particular, $\frakA_{jk}=0$ if $j+k\ge d)$ such that 
\begin{equation}\label{eq:greens-formula}
 (A_+u,v)_\Omega=(u,A^*_+v)_\Omega+(\frakA\rho u,\rho v)_{\pO} 
 \qquad\forall\;u,v\in H^d(\Omega,E);
\end{equation}
here $(\cdot,\cdot)_\Omega$ indicates the inner product of $L^2(\Omega,E)$, while  $(\cdot,\cdot)_\pO$ 
is the inner product of $\mathop{\oplus}\limits_{j=0}^{d-1}L^2(\pO,E|_\pO)$. The skew-diagonal elements 
$\frakA_{j(d-1-j)}$ are induced by endomorphisms in $E|_\pO$, acting like 
$i^d(-1)^{d-1-j}\sigma_\psi^d(A)(x,\nu(x))$ in the fibre 
over $x$. The boundary $\pO$ is called \emph{non-characteristic for $A$} if all these endomorphisms are isomorphisms. 
In this case, $\frakA$ is invertible. 

\begin{theorem}\label{thm:adjoint}
Let $(A_{+}+G)_{T}$ be a realization with $G=K\rho+G^\prime$ and boundary condition $T=P(\rho+T^\prime)$ 
in canonical form  $($recall that any normal realization can be represented in this way$)$. 
Assume that the boundary $\pO$ is non-characteristic for $A$ and define 
\begin{align*}
 G_\ad=-(\frakA T^\prime)^*\rho+G^{\prime *}-(KT^\prime)^*,\qquad 
 T_\ad=P_\ad\big(\rho+(K\frakA^{-1})^*\big), 
\end{align*}
with the so-called adjoint projection 
 $$P_\ad=\big(\frakA(1-P)\frakA^{-1}\big)^*.$$
The following is then true$:$ 
\begin{enumerate}
 \item $\mathrm{dom}((A_{+}+G)_{T}^*)\cap H^d(\Omega,E)=\mathrm{dom}((A_{+}^*+G_\ad)_{T_\ad})$. 
 \item If $(A_{+}+G)_{T}$ is elliptic, its adjoint coincides with $(A_{+}^*+G_\ad)_{T_\ad}$. 
\end{enumerate}
\end{theorem}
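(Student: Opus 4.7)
The strategy for (1) is to pair $((A_++G)u, v)_\Omega$ via Green's formula and read off both $T_\ad v = 0$ and the action $A_+^* + G_\ad$ from the surface integral. For (2), the additional ingredient is an elliptic regularity argument that upgrades elements of $\mathrm{dom}((A_++G)_T^*)$ to $H^d(\Omega, E)$.

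Since $\pO$ is non-characteristic, $\frakA$ is invertible, and combining \eqref{eq:greens-formula} with the decomposition $G = K\rho + G'$ gives
\[
 ((A_++G)u,v)_\Omega = \bigl(u,(A_+^* + G'^*)v\bigr)_\Omega + (\rho u, h)_{\pO},\qquad h := \frakA^*\rho v + K^* v.
\]
For $u\in\ker T$, the identity $\rho u = (\rho+T')u - T'u$ together with $(\rho+T')u\in\ker P = \im(1-P)$ rearranges the boundary term as
\[
 (\rho u, h)_{\pO} = \bigl((\rho+T')u,(1-P^*)h\bigr)_{\pO} - (u, T'^*h)_\Omega,
\]
and expanding $T'^*h = (\frakA T')^*\rho v + (KT')^* v$ produces the $L^2$-term $(u,(A_+^* + G_\ad)v)_\Omega$. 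The algebraic identity $(1-P^*) = \frakA^* P_\ad (\frakA^{-1})^*$ yields
\[
 (1-P^*)h = 0 \iff P_\ad\bigl(\rho v + (K\frakA^{-1})^* v\bigr) = 0 \iff T_\ad v = 0.
\]
The inclusion ``$\supseteq$'' in (1) is then immediate. For ``$\subseteq$'', I first restrict $u$ to $\ker(\rho+T')\cap\ker T$, which is $L^2$-dense by the density remark at the beginning of Section~\ref{sec:real03}; this forces the $L^2$-part of the adjoint action to be $A_+^* + G_\ad$. Next, using surjectivity of $u\mapsto(\rho+T')u$ from $\ker T\cap H^d(\Omega, E)$ onto $(1-P)\scrH^{d-\frac12}(\pO,E)$ --- a double application of Lemma~\ref{lem:surj}, once to $T$ itself and once to $\rho+T'$ viewed as a normal boundary operator with trivial projection --- I conclude $(1-P^*)h \perp \im(1-P)$, and hence $(1-P^*)h = 0$ since $P^2 = P$, which is $T_\ad v = 0$. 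This establishes (1).

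For (2), set $\mathcal{T}:=(A_++G)_T$ and $\mathcal{S}:=(A_+^* + G_\ad)_{T_\ad}$; by (1), $\mathcal{S}\subseteq\mathcal{T}^*$, so (2) reduces to proving $\mathrm{dom}(\mathcal{T}^*)\subseteq H^d(\Omega,E)$. The plan is to verify that the Toeplitz-type problem $(A_+^* + G_\ad, T_\ad)$ is itself elliptic in the sense of Definition~\ref{def:ell+norm}, so that Theorem~\ref{thm:main_toeplitz} yields a parametrix $\mathscr{R}$ of $\mathcal{S}$ continuous $L^2\to H^d$ with $I - \mathscr{R}\,\mathcal{S}$ smoothing; a standard regularity bootstrap built on the identity $v = \mathscr{R}\,\mathcal{T}^* v + (I - \mathscr{R}\,\mathcal{S})v$, interpreted via the extended action of Boutet operators on distributional spaces, then promotes any $v\in\mathrm{dom}(\mathcal{T}^*)$ from $L^2$ to $H^d$, at which point (1) concludes $\mathcal{T}^*v = \mathcal{S}v$. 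The main obstacle is the ellipticity verification: the interior principal symbol transports trivially as $\sigma_\psi^d(A^*) = \sigma_\psi^d(A)^*$, but the boundary symbol of $\mathcal{S}$, viewed as a Toeplitz symbol with target projected by $P_\ad$, must be identified with the fibrewise Hilbert-space adjoint of that of $\mathcal{T}$ with respect to the non-degenerate pairing on $\scrS(\rz_+)$ induced by $\frakA$, so that $P_\ad$ appears in duality with $1-P$ and invertibility transfers. This is essentially the half-line transcription of the Green's-formula computation above, but the careful bookkeeping of Poisson, trace, and projection factors through the adjoint operation is where the real content lies.
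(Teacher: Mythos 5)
Your treatment of part~(1) is essentially correct and tracks the paper's Green's formula computation; the bookkeeping that produces $G_\ad$ and $T_\ad$ is right, and the identity $(1-P^*)=\frakA^* P_\ad(\frakA^{-1})^*$ is verified correctly. Your route to the inclusion ``$\subseteq$'' differs slightly from the paper's: you first pin down $B^*v=(A_+^*+G_\ad)v$ using $L^2$-density of $\mathrm{ker}(\rho+T')$ and then subtract to force the boundary pairing to vanish for all $u\in\dom{B}$, whereas the paper invokes Proposition~1.6.5 of Grubb (existence of $u_\eps$ with the same boundary data $(\rho+T')u$ but arbitrarily small $L^2$-norm) to kill the boundary term directly. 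Both are valid; your version is a clean variant that substitutes a subtraction step for the oscillation argument.

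Part~(2), however, contains a genuine gap. You propose to verify that $(A_+^*+G_\ad)_{T_\ad}$ is itself an elliptic Toeplitz-type problem, produce a parametrix $\mathscr{R}$, and bootstrap, but you explicitly leave the ellipticity verification undone (``where the real content lies''), and the bootstrap identity $v=\mathscr{R}\,\mathcal{T}^*v+(I-\mathscr{R}\,\mathcal{S})v$ is problematic as written: $\mathcal{S}v$ presupposes $v\in\dom{\mathcal{S}}\subset H^d$, which is precisely what you are trying to prove, so invoking an ``extended action on distributional spaces'' without detail leaves the argument circular. The paper avoids both difficulties entirely. It proves (Proposition~\ref{prop:special_parametrix}) the existence of $R\in B^{-d,0}(\Omega;E,E)$ with $TR=0$, $R(L^2)\subset\dom{B}$, and $C_0:=(A_++G)R-1$ finite-rank smoothing, so that $BR=1+C_0$ is bounded and everywhere defined on $L^2$; the elementary operator-theoretic inclusion $R^*B^*\subset(BR)^*=1+C_0^*$ then yields, for every $v\in\dom{B^*}$, the representation $v=R^*(B^*v)-C_0^*v\in H^d(\Omega,E)$, at which point part~(1) finishes the proof. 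This sidesteps any ellipticity or boundary-symbol computation for the adjoint problem. You should replace your regularity argument by this parametrix-of-$B$ approach, or else actually carry out the adjoint boundary-symbol calculus you sketched.
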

\begin{proof}
For convenience set $B:=(A_{+}+G)_{T}$. 

$(1)$ Let $u,v\in H^d(\Omega,E)$. Using Green's formula and writing $\rho u=(\rho+T^\prime)u-T^\prime u$ 
we obtain
\begin{align}\label{eq:adj0}
\begin{split}
 \big((A_++G)u,v\big)_\Omega
 = & \big(u,(A^*_++G^{\prime*})v\big)_\Omega-\big(u,T^{\prime*}(\frakA^*\rho+K^*)v\big)_\Omega+\\
 &+\big((\rho+T^\prime)u,(\frakA^*\rho+K^*)v\big)_\pO. 
\end{split}
\end{align}
Now recall that $v\in\mathrm{dom}(B^*)$ if and only if $u\mapsto\big((A_++G)u,v\big)_\Omega$ is continuous on 
$\mathrm{dom}(B)$ with respect to the $L_2(\Omega,E)$-norm. Since the first two terms on the right-hand side of 
\eqref{eq:adj0} are continuous in this sense, it follows that $v\in\mathrm{dom}(B^*)$ if and only if there 
exists a constant $C\ge 0$ such that 
 $$\big|\big((\rho+T^\prime)u,(\frakA^*\rho+K^*)v\big)_\pO\big|\le C\|u\|_{L_2(\Omega,E)}
     \qquad\forall\;u\in\mathrm{dom}(B).$$
According to Proposition 1.6.5 of \cite{Grub}, for every $u\in H^d(\Omega,E)$ and $\eps>0$ there exists 
an $u_\eps\in H^d(\Omega,E)$ with $\|u_\eps\|_{L_2(\Omega,E)}<\eps$ and 
$(\rho+T^\prime)u_\eps=(\rho+T^\prime)u$. Hence $v\in\mathrm{dom}(B^*)$ if and only if 
 $$\big((\rho+T^\prime)u,(\frakA^*\rho+K^*)v\big)_\pO=0 \qquad\forall\;u\in\mathrm{dom}(B).$$
The surjectivity of $\rho+T^\prime:H^d(\Omega,E)\to\scrH^0(\pO,E)$ implies that 
 $$(\rho+T^\prime)(\mathrm{dom}(B))=\mathrm{ker}\,P=\mathrm{im}\,(1-P).$$ 
We conclude that 
$v\in\mathrm{dom}(B^*)$ if and only if 
 $$\big(\phi,(1-P^*)(\frakA^*\rho+K^*)v\big)_\pO=0 \qquad\forall\;\phi\in\scrH^0(\pO,E).$$
Now the claim immediately follows, since $T_\ad=(\frakA^{-1})^*(1-P^*)(\frakA^*\rho+K^*)$. 

$(2)$ If the realization is elliptic, by Proposition \ref{prop:special_parametrix}, proved below, 
there exists an operator 
$R\in B^{-d,0}(\Omega;E,E)$ such that $R(L^2(\Omega,E))\subset\mathrm{dom}(B)$ and 
$C:=(A_++G)R-1$ is smoothing, i.e., has range in $\scrC^\infty(\Omega,E)$. By general facts on 
the adjoint of compositions, $R^*B^*\subset(BR)^*= ((A_++G)R)^*=1+C^*$. Thus 
the result follows from $(1)$.  
\end{proof}

\begin{proposition}\label{prop:special_parametrix}
Let $(A_{+}+G)_{T}$ be elliptic. Then there exists an operator $R\in B^{-d,0}(\Omega;E,E)$ such that 
\begin{enumerate}
 \item $TR=0$; in particular, $R$ maps $H^d(\Omega,E)$ into the domain of $(A_{+}+G)_{T}$. 
 \item $C_0:=(A_++G)R-1$ is a finite-rank smoothing Green operator of type $0$.  
 \item $C_1=:R(A_++G)-1$ coincides on every space $\{u\in H^{s}(\Omega,E)\mid Tu=0\}$, $s>d-1/2$, 
  with a finite-rank smoothing Green operator of type $d$. 
\end{enumerate}
\end{proposition}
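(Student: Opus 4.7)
My plan is to combine a parametrix construction for the elliptic column operator $\scrB:=\binom{A_++G}{\Lambda T}$ in the Toeplitz-type Boutet de Monvel calculus with the abstract inversion result from the Appendix, and then apply a finite-rank smoothing correction to enforce $TR=0$ exactly. First, by Definition \ref{def:ell+norm}(1), $\scrB$ is elliptic in $\scrB^{d,d}(\Omega;(E,0;1),(E,F_\partial;P_\Lambda))$; Theorem \ref{thm:main_toeplitz}(4) combined with the Appendix's abstract inversion result yields an $\scrA_1=(R_1,K_1)\in\scrB^{-d,0}(\Omega;(E,F_\partial;P_\Lambda),(E,0;1))$ satisfying
$$\scrA_1\scrB=1-\Pi_0,\qquad \scrB\scrA_1=\scrP_1-\Pi_1,$$
where $\Pi_0,\Pi_1$ are finite-rank smoothing projections onto $\ker\scrB$ and a complement of $\mathrm{ran}\,\scrB$; by elliptic regularity their ranges lie in $\scrC^\infty$. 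Expanding the block identities gives in particular $(A_++G)R_1=1-\Pi_1^{(1,1)}$, $\Lambda T R_1=-\Pi_1^{(2,1)}$, and $R_1(A_++G)+K_1\Lambda T=1-\Pi_0$.

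The interior component $R_1$ does not yet satisfy $TR_1=0$, but the error $\Pi_1^{(2,1)}$ is finite-rank smoothing and can be killed by an explicit smooth correction. Write $\Pi_1^{(2,1)}f=\sum_{i=1}^N\langle f,\kappa_i\rangle_{L^2}\phi_i$ with linearly independent $\kappa_i\in\scrC^\infty(\Omega,E)$ and $\phi_i\in\scrC^\infty(\pO,F)\cap\mathrm{im}\,P_\Lambda$, and let $\{\tilde\kappa_i\}\subset\mathrm{span}\{\kappa_j\}$ be the biorthogonal family, $\langle\tilde\kappa_i,\kappa_j\rangle=\delta_{ij}$. Setting $m_i:=-R_1\tilde\kappa_i$, one has $m_i\in\scrC^\infty(\Omega,E)$ since operators of Boutet's calculus preserve smoothness up to the boundary, and $\Lambda T m_i=\Pi_1^{(2,1)}\tilde\kappa_i=\phi_i$. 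Define the finite-rank smoothing Green operator $Mf:=\sum_i\langle f,\kappa_i\rangle_{L^2}m_i$ (of type $0$) and set $R:=R_1+M\in B^{-d,0}(\Omega;E,E)$. A direct computation gives $\Lambda T R=-\Pi_1^{(2,1)}+\Pi_1^{(2,1)}=0$, hence $TR=0$, establishing (1). Property (2) follows from $C_0=(A_++G)R-1=-\Pi_1^{(1,1)}+(A_++G)M$, a finite-rank smoothing Green operator of type $0$ by the composition rule. For (3), on $\{u\in H^s\mid Tu=0\}$ the term $K_1\Lambda T u$ vanishes, so $R(A_++G)-1=-\Pi_0+M(A_++G)$ on that subspace, and the composition rule gives $M(A_++G)$ the type $d=\max(d,0+d)$, so the whole operator is finite-rank smoothing Green of type $d$.

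The main obstacle is the invocation of the Appendix: one must verify that the Toeplitz-type Boutet de Monvel calculus together with its natural Sobolev scale fits the abstract axiomatic framework there, so that a parametrix with smoothing remainders from Theorem \ref{thm:main_toeplitz} can be upgraded to a two-sided inverse modulo finite-rank smoothing projections onto kernel and cokernel, still inside the calculus. Granted this, the rest of the argument is an explicit finite-rank correction built from the smooth kernel of $\Pi_1^{(2,1)}$, together with routine bookkeeping of orders and types.
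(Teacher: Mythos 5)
Your proof is correct, and it takes a genuinely different route from the paper on both main design choices. The paper first performs an explicit order reduction: it writes $\scrA_0=\binom{A_++G}{\Lambda T}\Lambda_\Omega^{-1}\in\scrB^{0,0}$ via an order-reducing isomorphism $\Lambda_\Omega\in B^{d,0}(\Omega;E,E)$, so that the abstract Appendix result applies verbatim to an order-zero operator mapping each $H^s$ to itself; you instead apply the parametrix theory directly to the order-$d$ column operator $\scrB$, which requires (as you flag) running the Appendix with a shifted Sobolev scale on the target side, a harmless generalization that nonetheless deviates from the Appendix as literally stated. Second, to enforce $TR=0$ exactly the paper uses a multiplicative correction: it writes the $(2,1)$-block of the cokernel projection as $\pi_{21}u=\sum_j(u,u_j)w_j$, sets $\pi_U$ to be the orthogonal projection onto $U=\mathrm{span}\{u_j\}$, and takes $R=\Lambda_\Omega^{-1}A_1(1-\pi_U)$, so that $\pi_{21}(1-\pi_U)=0$ kills the trace error by construction. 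You instead use an additive correction: decomposing $\Pi_1^{(2,1)}=\sum_i\langle\cdot,\kappa_i\rangle\phi_i$ and building $M=\sum_i\langle\cdot,\kappa_i\rangle m_i$ with $m_i=-R_1\tilde\kappa_i$ from the biorthogonal family. Both devices are valid; the paper's $(1-\pi_U)$ trick is more compact and avoids introducing the biorthogonal dual, while your additive $M$ makes the mechanism of cancellation fully explicit. Your bookkeeping of orders and types in (2) and (3) is correct, and the final formulas $C_0=-\Pi_1^{(1,1)}+(A_++G)M$ and $C_1=-\Pi_0+M(A_++G)$ on $\ker T$ play the same role as the paper's $C_0=(1-\pi_{11})(1-\pi_U)-1$ and $C_1=-\Lambda_\Omega^{-1}(\pi_0+A_1\pi_U A_0)\Lambda_\Omega$.
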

\begin{proof}
It is a well-known fact that there exists a $\Lambda_\Omega\in B^{d,0}(\Omega;E,E)$ 
having inverse $\Lambda^{-1}_\Omega\in B^{-d,0}(\Omega;E,E)$. Employing the notation from 
Definition \ref{def:ell+norm}, let us define 
 $$\scrA_0=\begin{pmatrix}A_0\\T_0\end{pmatrix}
     :=\begin{pmatrix}A_++G\\\Lambda T\end{pmatrix}\Lambda^{-1}_\Omega
     \in \scrB^{0,0}(\Omega;(E,0;1),(E,F_\partial;P_\Lambda)).$$
By assumption, $\scrA_0$ is elliptic. We shall now define various projections; note that they all are smoothing 
Green operators of type $0$, since they are integral operators with smooth kernels. 
Applying Theorem \ref{thm:main_toeplitz} and the results of the Appendix, or referring to Theorem 2.3 of \cite{Schu37}, 
there exists a parametrix 
$\scrA_1=(A_1\;\;K_1)\in\scrB^{0,0}(\Omega;(E,F_\partial;P_\Lambda),(E,0;1))$ of $\scrA_0$ such that 
 $$\scrA_1\scrA_0=1-\pi_0,\qquad \scrA_0\scrA_1=1-\pi_1,$$
with projections of the form 
 $$\pi_0=\sum_{j=1}^{n_0}\big(\cdot,v^0_j\big)_{L_2(\Omega,E)}v_j^0,\qquad 
     \pi_1=\sum_{j=1}^{n_1}\big(\cdot,\scrP_\Lambda^*v^1_j\big)_{L_2(\Omega,E)\oplus L_2(\pO,F_\pO)}v_j^1$$
with functions $\{v_1^0,\ldots v_{n_0}^0\}\subset\scrC^\infty(\Omega,E)$ being an $L_2$-orthogonal basis of 
$V_0:=\mathrm{ker}\,\scrA_0$,  
with $\{v_1^1,\ldots v_{n_1}^1\}\subset\scrC^\infty(\Omega,E)\oplus\scrC^\infty(\pO,F_\pO,P_\Lambda)$ 
being an $L_2$-orthogonal basis of a space $V_1$ that complements  
$\scrA_0(H^s(\Omega,E))$ in $H^s(\Omega,E)\oplus H^s(\pO,F_\pO,P_\Lambda)$ simultaneously for all $s$, 
and with $\scrP_\Lambda:=\mathrm{diag}(1,P_\Lambda)$. 
Note that $(1-\scrP_\Lambda)\pi_1=\pi_1(1-\scrP_\Lambda)=0$. 
If we represent $\pi_1$ in block-matrix form, 
 $$\pi_1=\begin{pmatrix}\pi_{11}&\pi_{12}\\ \pi_{21}&\pi_{22}\end{pmatrix}:
     \begin{matrix}H^s(\Omega,E)\\ \oplus\\ H^s(\pO,F_\pO,P_\Lambda)\end{matrix}
     \lra
     \begin{matrix}H^s(\Omega,E)\\ \oplus\\ H^s(\pO,F_\pO,P_\Lambda)\end{matrix},$$
then 
 $$\pi_{21}u=\sum_{j=1}^{n_1}(u,u_j)_{L_2(\Omega,E)}w_j$$
provided 
$v_j^1=u_j\oplus w_j$ with suitable $u_j\in\scrC^{\infty}(\Omega,E)$ and 
$w_j\in\scrC^\infty(\pO,F_\pO,P_\Lambda)$. 
Now let $U=\mathrm{span}(u_1,\ldots,u_{n_1})$ have the $L_2$-orthonormal basis $\{e_1,\ldots,e_n\}$ and define 
 $$\pi_U=\sum_{j=1}^n(\cdot,e_j)_{L_2(\Omega,E)}e_j.$$
Then, by construction, $\pi_{21}(1-\pi_U)=0$. We now claim that $R:=\Lambda_\Omega^{-1}A_1(1-\pi_U)$ is 
the desired operator. In fact, 
\begin{align*}
 \begin{pmatrix}A_++G\\T\end{pmatrix}R
     &=\begin{pmatrix}1&0\\0&\Lambda^{-1}\end{pmatrix}\scrA_0 A_1(1-\pi_U)\\
     &=\begin{pmatrix}1&0\\0&\Lambda^{-1}\end{pmatrix}\begin{pmatrix}1-\pi_{11}\\-\pi_{21}\end{pmatrix}(1-\pi_U)
     =\begin{pmatrix}(1-\pi_{11})(1-\pi_U)\\ 0\end{pmatrix}
\end{align*}
shows that $TR=0$ and that $C_0$ is a finite-rank smoothing Green operator of type $0$. This shows $(1)$ and $(2)$. Finally, on $H^s(\Omega,E)\cap\mathrm{ker}\,T$, 
\begin{align*}
 C_1&=\Lambda_\Omega^{-1}A_1(1-\pi_U)(A_++G)-1=\Lambda_\Omega^{-1}A_1(1-\pi_U)A_0\Lambda_\Omega-1\\
       &=\Lambda_\Omega^{-1}(A_1A_0-K_1T_0)\Lambda_\Omega-1-
            \Lambda_\Omega^{-1}A_1\pi_UA_0\Lambda_\Omega\\
       &=\Lambda_\Omega^{-1}(\scrA_1\scrA_0-1)\Lambda_\Omega-
            \Lambda_\Omega^{-1}A_1\pi_UA_0\Lambda_\Omega\\
       &=-\Lambda_\Omega^{-1}(\pi_0+A_1\pi_UA_0)\Lambda_\Omega,  
\end{align*}
proving claim $(3)$. 
\end{proof}

\begin{corollary}\label{cor:closed}
If $(A_{+}+G)_{T}$ is elliptic, it is a closed operator in $L_2(\Omega,E)$.
\end{corollary}
\begin{proof}
Let $(u_n)$ be a sequence in $\mathrm{dom}((A_{+}+G)_{T})$ such that both $u:=\lim\limits_{n\to+\infty}u_n$ and 
$v:=\lim\limits_{n\to+\infty}Au_n$ exist in $L_2(\Omega,E)$. 

Let $R\in B^{-d,0}(\Omega;E,E)$ be the parametrix constructed in Proposition \ref{prop:special_parametrix} and 
$C_1$ the respective smoothing Green operator. Then 
$C_1u_n=R(A_{+}+G)u_n-u_n$ is convergent in $L_2(\Omega,E)$. Since $C_1$ maps the domain of 
$(A_{+}+G)_{T}$ into a finite-dimensional subspace of $\scrC^\infty(\Omega,E)$, the sequence $(C_1u_n)$ is 
also convergent in $H^d(\Omega,E)$. Thus $u_n=R(A_{+}+G)u_n-C_1u_n$ converges in $H^d(\Omega,E)$. 
We conclude that $u\in H^d(\Omega,E)$, $v=(A_{+}+G)u$, and $Tu= \lim\limits_{n\to+\infty}Tu_n=0$. 
\end{proof}

\subsection{Self-adjoint realizations}\label{sec:real04}

A realization may be represented in many different ways. In the present section we analize this fact 
systematically and then characterize the self-adjoint realizations. 

\begin{proposition}\label{prop:equiv_cond}
Let $T_j=P_j(\rho+T^\prime_j)$, $j=0,1$, be two boundary conditions in normal form. 
Then $T_0$ and $T_1$ are equivalent if, and only if, $P_j(1-P_{1-j})=0$ and 
$P_jT_j^\prime=P_jT_{1-j}^\prime$ for $j=0,1$. 
\end{proposition}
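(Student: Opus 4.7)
The plan is to treat the two implications separately. Set $\Theta_j := \rho + T_j'$, so that $T_j = P_j\Theta_j$, and put $D := T_0' - T_1' = \Theta_0 - \Theta_1$. Several ingredients are available: each $\Theta_j : H^d(\Omega,E) \to \scrH^0(\pO,E)$ is surjective (Proposition 1.6.5 of \cite{Grub}, already invoked in the proof of Lemma \ref{lem:surj}); its kernel $V_j := \ker\Theta_j$ is dense in $L_2(\Omega,E)$ (Lemma 1.6.8 of \cite{Grub}, referenced earlier in Section \ref{sec:real03}); $D$, being a trace operator whose $j$-th component has type $0$ and order $j+1/2$, extends to a continuous map $L_2(\Omega,E) \to \scrH^{-d}(\pO,E)$; and each $P_k$, with block entries in $L^{j-l}_\cl(\pO; E|_\pO, E|_\pO)$, defines a continuous map $\scrH^s(\pO,E) \to \scrH^s(\pO,E)$ for every $s \in \rz$.

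For the easy direction $(\Leftarrow)$, assuming $P_j(1-P_{1-j})=0$ and $P_jT_j'=P_jT_{1-j}'$, I would compute directly $T_0 = P_0\rho + P_0T_0' = P_0\rho + P_0T_1' = P_0\Theta_1 = (P_0P_1)\Theta_1 = P_0T_1$, and symmetrically $T_1 = P_1T_0$; the inclusions $\ker T_j \subset \ker T_{1-j}$ are then immediate.

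For the reverse direction $(\Rightarrow)$, assume $\ker T_0 = \ker T_1$. I first establish $P_jD = 0$ on $H^d(\Omega,E)$ for $j=0,1$. Picking $u \in V_0$, $\Theta_0 u = 0$ yields $\Theta_1 u = -Du$, and $u \in V_0 \subset \ker T_0 = \ker T_1$ gives $0 = P_1\Theta_1 u = -P_1 D u$. Thus $P_1 D$ vanishes on the $L_2$-dense subspace $V_0$, and by continuity $P_1 D: L_2(\Omega,E) \to \scrH^{-d}(\pO,E)$ is identically zero. Restricted to $H^d$ the values of $P_1 D$ actually lie in $\scrH^0(\pO,E)$, and since the Sobolev embedding $\scrH^0 \hookrightarrow \scrH^{-d}$ is injective, $P_1 D = 0$ as a map $H^d \to \scrH^0$. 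The symmetric argument with $V_1$ in place of $V_0$ yields $P_0 D = 0$. Plugging these identities back, $T_0 = P_0\rho + P_0T_0' = P_0\rho + P_0T_1' = P_0\Theta_1$, so $\ker T_0 = \Theta_1^{-1}(\ker P_0)$; by hypothesis this coincides with $\ker T_1 = \Theta_1^{-1}(\ker P_1)$, and surjectivity of $\Theta_1$ then forces $\ker P_0 = \ker P_1$, equivalently $P_j(1-P_{1-j}) = 0$ for $j = 0, 1$.

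The main technical delicacy is the bootstrap in the density argument: the natural continuous extension of $D$ to $L_2(\Omega,E)$ only takes values in the weaker space $\scrH^{-d}(\pO,E)$, so density of $V_0$ in $L_2$ directly forces vanishing only in $\scrH^{-d}$. The saving observation is that on $H^d$ the values of $P_1 D$ automatically sit in the finer space $\scrH^0$, and the injectivity of the embedding $\scrH^0 \hookrightarrow \scrH^{-d}$ upgrades the vanishing back to $\scrH^0$.
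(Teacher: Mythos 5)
Your proof is correct, and the route is recognizably the same in spirit but differs in one genuine technical choice. For the hard direction the paper introduces the isomorphism $\Lambda = 1 + KT_0'$ (from Grubb's Lemma 1.6.8), rewrites $T_0 = P_0\rho\Lambda$, and uses density of $U=\Lambda^{-1}(\scrC^\infty_0(\mathrm{int}\,\Omega,E))$ in $L_2$; you instead skip $\Lambda$ entirely and work directly with $\Theta_j=\rho+T_j'$, using density of $V_0=\ker\Theta_0$ (the same Grubb lemma). Since $\rho\Lambda=\Theta_0$, the paper's $U$ is precisely the subset of $V_0$ on which $\Lambda$ takes values in $\scrC^\infty_0$, so you are trading a smaller, more explicit dense set for a larger one whose density is cited directly. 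Your version buys you a cleaner presentation — no auxiliary isomorphism — but requires $V_0$ rather than $U$ to carry the density burden, which is fine since both are covered by the same reference. You are also more explicit about the bootstrap (vanishing first in $\scrH^{-d}$ via density, then in $\scrH^0$ via injectivity of the embedding); the paper compresses this into the remark that $T_j'$ is of type $0$, which is the same observation stated tersely. The order of deductions also differs: the paper proves $P_1D=0$, then $\ker P_0=\ker P_1$, then symmetrizes for $P_0D=0$, whereas you prove $P_jD=0$ for both $j$ first and then extract $\ker P_0=\ker P_1$ — both orderings are sound. For the easy direction, the paper uses $P_1(1-P_0)=0$ to push $\ker P_0\subseteq\ker P_1$, while you use $P_0=P_0P_1$ directly in the algebraic rewriting $T_0=P_0\Theta_1=(P_0P_1)\Theta_1$; again, a presentational difference only.
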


Note that the property $P_j(1-P_{1-j})=0$ for $j=0,1$ is equivalent to $\mathrm{ker}\,P_0=\mathrm{ker}\,P_1$ 
for $P_0$ and $P_1$ considered as maps on $\scrH^s(\pO,E)$  for some $($and then every$)$ choice of $s$. 
Then $P_0T_0^\prime=P_0T_1^\prime$ is equivalent to $P_1T_1^\prime=P_1T_0^\prime$. 

\begin{proof}[Proof of Proposition $\mathrm{\ref{prop:equiv_cond}}$]
Recall that the boundary conditions are called equivalent if their kernels on $H^d(\Omega,E)$ coincide.

First let us show that the stated conditions imply the equivalence. 
Clearly $T_0u=0$ means $(\rho+T^\prime_0)u\in\mathrm{ker}\,P_0$. Thus, by assumption, also 
$0=P_1(\rho+T^\prime_0)u=P_1(\rho+T^\prime_1)u=T_1u$. Interchanging roles of $T_0$ and $T_1$ 
thus shows $\mathrm{ker}\,T_0=\mathrm{ker}\,T_1$. 

Now let us assume that the kernels coincide. According to Lemma 1.6.8 of \cite{Grub} there exists a 
right-inverse $K$ to $\rho$ such that $\Lambda:=1+KT_0^\prime$ is an isomorphism in $H^s(\Omega,E)$ 
simultaneously for all $s\ge0$. Note that $P_0\rho\Lambda=T_0$. Thus, for $u\in H^d(\Omega,E)$, 
\begin{align}\label{eq:xxx}
\begin{split}
 P_0\rho\Lambda u=0 & \iff T_1u=0 \\ 
 & \iff P_1\rho \Lambda u + P_1\big(\rho(1-\Lambda)+T_1^\prime\big)u =0\\
 & \iff P_1\rho \Lambda u + P_1(T_1^\prime-T_0^\prime)u=0.
\end{split}
\end{align}
This equivalence implies, in particular, that 
 $$P_1(T_1^\prime-T_0^\prime)u=0\qquad
    \forall\;u\in U:=\Lambda^{-1}\big(\scrC^\infty_0(\mathrm{int}\,\Omega,E)\big).$$
Since $U$ is dense in $L_2(\Omega,E)$ and $T_j^\prime$ is of type $0$, it follows that 
$P_1(T_1^\prime-T_0^\prime)=0$, i.e., $P_1T_0^\prime=P_1T_1^\prime$. Then  
\eqref{eq:xxx} and the surjectivity of $\rho\Lambda:H^d(\Omega,E)\to \scrH^{0}(\pO,E)$ show 
that $P_0$ and $P_1$ have the same kernel on $\scrH^{0}(\pO,E)$. Interchanging roles of 
$T_0$ and $T_1$ yields also $P_0T_1^\prime=P_0T_0^\prime$. 
\end{proof}

Let $B=(A_++G)_T$ with $T=P(\rho+T^\prime)$. One can always choose $G=K\rho+G^\prime$ in a certain 
\emph{reduced form}, namely with $K$ satisfying $KP=0$. In fact, if initially $G=K_0\rho+G_0^\prime$ and $Tu=0$ 
$($i.e., $P\rho u=-T^\prime u)$, we can write 
 $$Gu=K_0(P\rho u+(1-P)\rho u)+G_0^\prime u=K_0(1-P_0)\rho u+ (G_0^\prime-K_0PT^\prime)u$$ 
and then set $K:=K_0(1-P)$ and $G^\prime:=G_0^\prime-K_0PT^\prime$. 

\begin{proposition}\label{prop:equiv_action}
With $j=0,1$ let $B_j=(A_++G_j)_{T}$ be two realizations with $T=P(\rho+T^\prime)$ and 
$G_j=K_j\rho+G_j^\prime$ in reduced form, i.e., $K_jP=0$. Then $B_0=B_1$ if, and only if, 
$K_0=K_1$ and $G_0^\prime=G_1^\prime$. 
\end{proposition}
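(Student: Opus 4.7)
The \emph{if} direction is immediate, so I would concentrate on the converse. Setting $K := K_0 - K_1$ and $G' := G_0' - G_1'$, the reduced-form hypothesis gives $KP = 0$, while $B_0 = B_1$ (same domain $\ker T \cap H^d(\Omega, E)$, same action on it) rewrites as
$$(K\rho + G')u = 0\qquad \forall\, u\in\mathrm{dom}(B_0).$$
My goal is to conclude $K = 0$ and $G' = 0$ individually, by testing this identity on two well-chosen families of test functions inside the common domain.

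Step 1. First I would restrict to $u \in \ker(\rho + T') \cap H^d(\Omega, E)$. Such $u$ automatically lies in $\mathrm{dom}(B_0)$ since $T = P(\rho + T')$, and by Lemma 1.6.8 of \cite{Grub} this kernel is dense in $L_2(\Omega, E)$. For these $u$, $\rho u = -T'u$, so the hypothesis becomes $(G' - KT')u = 0$. Both $G'$ and $KT'$ are singular Green operators of type $0$ and hence act continuously on $L_2(\Omega, E)$, so density upgrades this to the operator identity $G' = KT'$ on all of $L_2$.

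Step 2. Next I would invoke surjectivity of $\rho + T' : H^d(\Omega, E) \to \scrH^0(\pO, E)$ (Proposition 1.6.5 of \cite{Grub}). Given $\phi \in \ker P \subset \scrH^0(\pO, E)$, pick $u \in H^d(\Omega, E)$ with $(\rho + T')u = \phi$; then $Tu = P\phi = 0$, so $u \in \mathrm{dom}(B_0)$, and $\rho u = \phi - T'u$. Inserting into the vanishing identity and simplifying by means of Step 1 collapses everything to $K\phi = 0$. Hence $K$ annihilates $\ker P$; combined with the standing $KP = 0$, this forces $K = 0$ on $\scrH^0(\pO, E)$, and then Step 1 yields $G' = 0$.

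The only delicate point is the density argument in Step 1: the cancellation $(G' - KT')u = 0$ is obtained \emph{a priori} only on a dense subspace of $L_2(\Omega, E)$, and I must lean on the type-$0$ structure of $G'$, $T'$ to extend this to a global $L_2$-identity — without which the substitution in Step 2 could not be carried out. Everything else is bookkeeping.
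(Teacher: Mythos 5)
Your proof is correct, and it is essentially the paper's argument carried out without the $\Lambda$-conjugation. The paper applies the Grubb isomorphism $\Lambda = 1 + K_0T'$ (Lemma~1.6.8 of \cite{Grub}) to transform the domain condition into $P\rho v = 0$, then writes $(G_0-G_1)\Lambda^{-1} = (K_0-K_1)\rho + G$ with $G$ of type $0$; testing on $v \in \scrC^\infty_0(\mathrm{int}\,\Omega, E)$ kills $G$, and surjectivity of $\rho$ onto $\scrH^0(\pO,E)$ combined with $\scrH^0 = \ker P \oplus \mathrm{im}\,P$ and $(K_0-K_1)P=0$ kills $K_0-K_1$. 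You instead substitute $\rho u = -T'u$ directly on the $L_2$-dense subspace $\ker(\rho+T')\cap H^d(\Omega,E)$ (the same Lemma~1.6.8), obtaining the operator identity $G'=KT'$, and then use surjectivity of $\rho+T'$ (Proposition~1.6.5 of \cite{Grub}) to reach $K\phi=0$ for $\phi\in\ker P$. The ingredients --- the two Grubb facts and the decomposition of $\scrH^0(\pO,E)$ along $P$ --- are identical; your version avoids the conjugation and the auxiliary decomposition of $(G_0-G_1)\Lambda^{-1}$, which is a genuine simplification. One minor imprecision: $G'-KT'$ has order $d$ and type $0$, so it maps $L_2(\Omega,E)$ continuously into $H^{-d}(\Omega,E)$ rather than into $L_2(\Omega,E)$; that is all the density argument requires, but the phrase ``act continuously on $L_2$'' should be read in this sense.
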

\begin{proof}
Clearly $B_0=B_1$ if, and only if,  
 $$(A_++G_0)u=(A_++G_1)u\qquad \forall\; u\in H^d(\Omega,E)\cap\mathrm{ker}\,T.$$
If $\Lambda$ is an isomorphism associated with $T$ as in the proof of Proposition \ref{prop:equiv_cond}, 
this is equivalent to 
 $$(G_0-G_1)\Lambda^{-1}v=0\qquad \forall\; v\in H^d(\Omega,E)\cap\mathrm{ker}\,P\rho.$$
For such $v$ we can write 
 $$(G_0-G_1)\Lambda^{-1}v=(K_0-K_1)\rho v+Gv$$ 
with $G:=(K_0-K_1)\rho(\Lambda^{-1}-1)+(G_0^\prime-G_1^\prime)\Lambda^{-1}$ having type 0, according 
to Lemma 1.6.8 of \cite{Grub}. Choosing $v\in\scrC^\infty_0(\mathrm{int}\,\Omega,E)$ we derive that 
$G=0$ and that 
 $$(K_0-K_1)\rho v=0\qquad \forall\; v\in H^d(\Omega,E)\cap\mathrm{ker}\,P\rho.$$
since 
$\rho:H^d(\Omega,E)\to\scrH^0(\pO,E)$ surjectively, this means 
 $$(K_0-K_1)\phi=0\qquad \forall\;\phi\in \scrH^0(\pO,E)\cap\mathrm{ker}\,P.$$
Since $\mathrm{ker}\,P=\mathrm{im}\,(1-P)$ we derive that $(K_0-K_1)(1-P)=0$ and thus 
$K_0-K_1=0$, since $(K_0-K_1)P=0$ by assumption. From $G=0$ we then obtain $G_0^\prime=G_1^\prime$. 
\end{proof}

As a consequence we obtain the following description of self-adjointness for realizations: 

\begin{theorem}\label{thm:self-adjoint}
Consider an elliptic realization $B=(A_++G)_T$ with $A$ being symmetric, $T=P(\rho+T^\prime)$ and 
$G=K\rho+G^\prime$ in reduced form. Assume that $\pO$ is non-characteristic for $A$. 
Then$:$
\begin{enumerate}
 \item $\mathrm{dom}(B^*)=\mathrm{dom}(B)$ if, and only if, $\frakA:\mathrm{ker}\,P\to(\mathrm{ker}\,P)^\perp$ 
  isomorphically and $P(T^\prime+\frakA^{-1}K^*)=0$. 
 \item If $\mathrm{dom}(B^*)=\mathrm{dom}(B)$ then $B=B^*$ if, and only  if, 
  \mbox{$G^\prime=G^{\prime *}-(KT^\prime)^*-T^{\prime *}\frakA T^\prime$}. 
\end{enumerate}
Let us note that one always may assume that $T^\prime=PT^\prime$ in the representation of $T$. In this case, 
the term $(KT^\prime)^*$ in $(2)$ vanishes, since $KP=0$ by assumption.  
\end{theorem}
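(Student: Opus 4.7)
The plan is to use Theorem~\ref{thm:adjoint} to identify $B^*$ explicitly, then to reduce each part of the claim to one of Propositions~\ref{prop:equiv_cond} or~\ref{prop:equiv_action}. A first preparatory step, crucial throughout, is to observe that symmetry of $A$ forces $\frakA$ to be skew-adjoint: subtracting Green's formula~\eqref{eq:greens-formula} applied to $(u,v)$ from the conjugate of the same formula applied to $(v,u)$ yields $(\rho u, \frakA \rho v)_\pO + (\frakA \rho u, \rho v)_\pO = 0$ for all $u,v \in H^d(\Omega,E)$, whence $\frakA^* = -\frakA$. Consequently $(K\frakA^{-1})^* = -\frakA^{-1}K^*$, $(\frakA T^\prime)^* = -T^{\prime *}\frakA$, and the adjoint projection of Theorem~\ref{thm:adjoint} simplifies to $P_\ad = \frakA^{-1}(1-P^*)\frakA$; in particular, $T_\ad = P_\ad(\rho - \frakA^{-1}K^*)$ is already in canonical normal form.

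For part~(1), I apply Proposition~\ref{prop:equiv_cond} to the pair $T = P(\rho+T^\prime)$ and $T_\ad$. The kernel condition reads $\mathrm{ker}\,P = \mathrm{ker}\,P_\ad$; since $\mathrm{ker}\,P_\ad = \frakA^{-1}\mathrm{ker}(1-P^*) = \frakA^{-1}\mathrm{im}\,(P^*) = \frakA^{-1}(\mathrm{ker}\,P)^\perp$, this amounts to $\frakA(\mathrm{ker}\,P) = (\mathrm{ker}\,P)^\perp$, equivalently (by invertibility of $\frakA$, which comes from the non-characteristic hypothesis) to $\frakA$ restricting to an isomorphism $\mathrm{ker}\,P \to (\mathrm{ker}\,P)^\perp$. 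The trace-matching condition $PT^\prime = -P\frakA^{-1}K^*$ is exactly $P(T^\prime + \frakA^{-1}K^*) = 0$.

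For part~(2), assume $\mathrm{dom}(B^*) = \mathrm{dom}(B)$. I first normalize $T^\prime = PT^\prime$, which leaves $T$ unchanged because $P^2 = P$; under this normalization $KT^\prime = KPT^\prime = 0$ (since $KP = 0$), so $(KT^\prime)^*$ drops out of $G_\ad$, leaving $G_\ad = T^{\prime *}\frakA\,\rho + G^{\prime *}$. To apply Proposition~\ref{prop:equiv_action} I put $G_\ad$ into reduced form with respect to $P$: on $\mathrm{dom}(B)$, $P\rho u = -T^\prime u$ (since $PT^\prime = T^\prime$), so $G_\ad u = T^{\prime *}\frakA(1-P)\rho u + (G^{\prime *} - T^{\prime *}\frakA T^\prime)u$, giving reduced data $K_{\ad,\mathrm{red}} = T^{\prime *}\frakA(1-P)$ and $G^\prime_{\ad,\mathrm{red}} = G^{\prime *} - T^{\prime *}\frakA T^\prime$. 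Proposition~\ref{prop:equiv_action} then yields $B = B^*$ iff $K = T^{\prime *}\frakA(1-P)$ and $G^\prime = G^{\prime *} - T^{\prime *}\frakA T^\prime$. The first of these equalities is automatic given part~(1): taking adjoints in $P(T^\prime + \frakA^{-1}K^*) = 0$ and using $T^\prime = PT^\prime$ gives $T^{\prime *} = K\frakA^{-1}P^*$; combined with $\frakA(\mathrm{ker}\,P) = \mathrm{im}\,(P^*)$, which entails $P^*\frakA(1-P) = \frakA(1-P)$, one concludes $T^{\prime *}\frakA(1-P) = K(1-P) = K$. Only the asserted $G^\prime$-identity survives, proving~(2).

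The two subtle points are the skew-adjointness $\frakA^* = -\frakA$---this is what turns the adjoint data of Theorem~\ref{thm:adjoint} into the symmetric-looking forms appearing in the statement---and the normalization $T^\prime = PT^\prime$, which eliminates the $(KT^\prime)^*$ contribution noted in the remark after the theorem and collapses $T^{\prime *}\frakA PT^\prime$ to $T^{\prime *}\frakA T^\prime$ in the reduced form of $G^\prime_\ad$; both are essential to reach the clean form stated in~(2).
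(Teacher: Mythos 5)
Your proof is correct and follows essentially the same route as the paper: identify $B^*$ via Theorem~\ref{thm:adjoint}, use $\frakA^*=-\frakA$ to simplify $G_\ad$, $T_\ad$, $P_\ad$, then reduce (1) to Proposition~\ref{prop:equiv_cond} and (2) to Proposition~\ref{prop:equiv_action}. The only cosmetic differences are that you impose the normalization $T^\prime=PT^\prime$ from the outset of part~(2) (the paper keeps the general form and records the simplification in the closing remark) and that you verify the redundancy of the condition $K=T^{\prime*}\frakA(1-P)$ by a direct computation from the adjoint of the relation in part~(1), whereas the paper runs a chain of equivalences back to $P_\ad(T^\prime+\frakA^{-1}K^*)=0$; these are inessential variations of the same argument. (Minor wording slip: the two Green's-formula identities should be \emph{added}, not subtracted, to obtain $(\rho u,\frakA\rho v)_\pO+(\frakA\rho u,\rho v)_\pO=0$, but the displayed identity and the conclusion $\frakA^*=-\frakA$ are correct.)
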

\begin{proof}
We have $B^*=(A_++G_\ad)_{T_\ad}$ as described Theorem \ref{thm:adjoint}. The symmetry of $A$ implies 
that $\frakA^*=-\frakA$ and therefore 
\begin{align*}
 G_\ad&=T^{\prime*}\frakA\rho+G^{\prime *}-(KT^\prime)^*,\\
 T_\ad&=P_\ad\big(\rho-\frakA^{-1}K^*\big),\\ 
 P_\ad&=\frakA^{-1}(1-P^*)\frakA. 
\end{align*}
Hence, by Proposition \ref{prop:equiv_cond} and the comment given thereafter, 
the domains of $B$ and $B^*$ coincide if, and only if, 
$\mathrm{ker}\,P_\ad=\mathrm{ker}\,P$ and $PT^\prime=-P\frakA^{-1}K^*$. Now $(1)$ follows, since 
 $$u\in\mathrm{ker}\,P_\ad \iff 
     \frakA u\in \mathrm{ker}\,(1-P^*)=\mathrm{im}\,P^*=(\mathrm{ker}\,P)^\perp.$$ 
Let us now show $(2)$. We have $B^*=(A_++G_\ad)_T$, since $T$ and $T_\ad$ are equivalent by assumption. 
Writing $\rho=(1-P)\rho+P\rho$ and using the fact that $P\rho u=-T^\prime u$ provided $Tu=0$, 
the reduced form of $G_\ad$ is 
 $$G_\ad=T^{\prime*}\frakA(1-P)\rho+G^{\prime *}-(KT^\prime)^*-T^{\prime *}\frakA T^\prime.$$
According to Proposition \ref{prop:equiv_action}, $B=B^*$ is equivalent to 
 $$K=T^{\prime*}\frakA(1-P)\quad\text{and}\quad
     G^\prime=G^{\prime *}-(KT^\prime)^*-T^{\prime *}\frakA T^\prime.$$
Since $KP=0$ by assumption, 
\begin{align*}
 K=T^{\prime*}\frakA(1-P) 
 &\iff (K-T^{\prime*}\frakA)(1-P)=0\\
 &\iff (1-P^*)(K^*+\frakA T^{\prime})=0 \\ 
 &\iff P_\ad (T^\prime+\frakA^{-1}K^*)=0
\end{align*}
However, this is true by $(1)$ $(P_\ad$ can be equivalently replaced by $P$, since $P_\ad$ and $P$ have the 
same kernel$)$. 
\end{proof}

Theorem \ref{thm:self-adjoint} in case of $B=(A_+)_T$ with symmetric $A$ and $T=P\rho$, states that the 
self-adjointness of $B$ is equivalent to $\frakA:\mathrm{ker}\,P\to(\mathrm{ker}\,P)^\perp$ being an isomorphism. 

\section{Spectral triples for manifolds with boundary}
\label{sec:spectral}

A triple $(\mathcal{A},\mathcal{H},\mathcal{D})$ is called a spectral triple of dimension $n\in\nz$ if 
\begin{itemize}
 \item[a$)$] $\calH$ is a Hilbert space and $\mathcal{A}$ is a unital, involutive algebra, 
  faithfully represented in $\scrL(H)$,  
 \item[b$)$] $\mathcal{D}$ is a closed, self-adjoint operator in $\mathcal{H}$ with compact resolvent and such that 
  the sequence of eigenvalues $\mu_1\le\mu_2\le\cdots$ of $|\calD|$ satisfies $\mu_j\sim j^{1/n}$.  
 \item[c$)$] for every $a\in\mathcal{A}$, application of $a$ preserves $\domain(\mathcal{D})$ and the 
  commutator $[\mathcal{D},a]$, initially defined on $\domain(\mathcal{D})$, extends by continuity to an operator 
  in $\scrL(H)$, denoted by $da$ $($thus $da=\overline{[\mathcal{D},a]})$.  
\end{itemize}
To define the notion of regular spectral triple, we shall need the operator 
 $$\delta:\domain(\delta)\lra\scrL(\calH),\qquad L\mapsto\delta(L):=\overline{[|\calD|,L]},$$
whose domain consists of those operators $L\in\scrL(\calH)$ that map $\domain(\calD)$ into itself and 
whose commutator $[|\calD|,L]$ extends by continuity to a bounded operator in $\calH$.  

\begin{definition}
A spectral triple $(\mathcal{A},\mathcal{H},\mathcal{D})$ is called regular if, for every $a\in\mathcal{A}$,  
 $$a,da\in\domain(\delta^k)\qquad\forall\;k\in\nz.$$ 
\end{definition}

In the sequel we shall focus on the case that $\mathcal{H}:=L^2(\Omega,E)$ with $n=\mathrm{dim}\,\Omega$ 
and that  $\mathcal{A}\subseteq\scrC^\infty(\Omega)$, represented in $\scrL(\calH)$ as operators of multiplication 
with functions. We now shall analyze when a first order, elliptic, self-adjoint realization $\calD=(A_++G)_T$ subject to 
APS-type conditions leads to a spectral triple of dimension $n$. 

First of all we note that if $\calA^0_ \calD$ is defined as 
\begin{equation}\label{eq:A0D}
 \calA^0_\calD:=\{a\in\scrC^\infty(\Omega)\mid \text{both $a$ and $a^*$ map $\domain(\calD)$ into itself}\},
\end{equation}
then $(\mathcal{A}^0_\calD,\mathcal{H},\mathcal{D})$ is a spectral triple provided $G$ is a Green operator of order 
and type 0. In fact, for any $a\in\calA^0_\calD$, 
 $$[A_++G,a]=[A,a]_++[G,a]\in\scrL(\calH),$$
since $[A,a]$ is a pseudodifferential operator of order 0 and $[G,a]$ is Green operator of order and type 0.
Moreover, by self-adjointness, $(\calD-i)^n$ induces a bijection $\domain(\calD^n)\to\calH$. 
The fact that $\domain(\calD^n)\subset H^n(\Omega,E)$ together with a general, functional-analytic result 
$($see e.g. Lemma A.4 in \cite{Grub}) now implies that $\mu_j(|\calD|)\sim j^{1/n}$. 

The situation for regular spectral triples is more complicated$:$ 

\begin{theorem}\label{th:spectral}
Let $\mathcal{H}=L^2(\Omega,E)$ and $\calD:=(A_+)_T$ be an elliptic, 
self-adjoint\footnote{Recall that $\calD$ is self-adjoint if, and only if, 
$A$ is symmetric and $\mathrm{ker}\,T=\mathrm{ker}\,T_\ad$.} realization of first order with boundary 
condition of the form $Tu=P(Su|_{\pO}+T^\prime u)$ $($cf.\ Definition $\mathrm{\ref{def:aps}}$ with 
$d=1)$. Assume  that $A^2$ has scalar principal symbol and that 
\begin{equation}\label{eq:+}
  A_+P_+=(AP)_+ \quad\forall\;
 \text{non-negative order pseudodifferential operators $P$}.\footnotemark
\end{equation}
Let\footnotetext{For example this is the case if $A=A_0+A_1$ where $A_0$ is a differential operator and 
$A_1$ is pseudodifferential with $A_1=A_1\varphi$ with a smooth function $\varphi$ supported in the interior of $\Omega$. 
Also more general $A_1$ are possible, but we shall not enter details here.} 
$\calA^\infty_\calD$ be defined as
   $$\calA^\infty_\calD
       :=\{a\in\calA^0_\calD \mid \text{both $a$ and $a^*$ map $\calH^\infty_\calD$ into itself }\},$$
where $\calA^0_\calD$ is as in \eqref{eq:A0D} and 
\begin{equation*}
 \calH^\infty_\calD=\mathop{\mbox{\Large$\cap$}}_{k\in\nz}\domain(\calD^k)
\end{equation*}
$($note that in the definition of $\calH^\infty_\calD$ one may also use the operator $|\calD|$ in place of $\calD)$. 
Then $(\mathcal{A}^\infty_\calD,\mathcal{H},\mathcal{D})$ is a regular spectral triple. 
Moreover, $\calA^\infty_\calD$ is the largest subalgebra of $\calA^0_\calD$ that, 
together with $\calD$, leads to a regular spectral triple.
\end{theorem}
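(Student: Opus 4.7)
The plan splits into three parts: first, that $\calA^\infty_\calD$ is a unital involutive subalgebra of $\calA^0_\calD$; second, regularity of $(\calA^\infty_\calD,\calH,\calD)$; third, the maximality claim. The first is immediate from the definition, since the collection of bounded operators on $\calH$ preserving $\calH^\infty_\calD$ is closed under composition and linear combinations, and $\calA^\infty_\calD$ is closed under $a\mapsto a^*$ by construction. The spectral triple axioms follow from the inclusion $\calA^\infty_\calD\subseteq\calA^0_\calD$ together with the fact, noted before the theorem, that $(\calA^0_\calD,\calH,\calD)$ is already a spectral triple (here $G=0$, which is of type and order zero).

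The technical core is the following characterisation, which I would isolate as a separate lemma: for $L\in\scrL(\calH)$ mapping $\calH^\infty_\calD$ into itself, $L\in\bigcap_{k\in\nz}\domain(\delta^k)$ if and only if $L$ extends to a continuous map on every Sobolev-type space $\calH^s_\calD:=\domain\pt{(1+\calD^2)^{s/2}}$, $s\ge 0$. The ``$\Rightarrow$'' direction is standard: each $\delta^j(L)$ is bounded on $\calH$ and, inductively, maps $\calH^s_\calD$ into $\calH^{s-1}_\calD$, so iterating gives continuity on every $\calH^s_\calD$. The ``$\Leftarrow$'' direction is the symbol-level statement: since $A^2$ has scalar principal symbol, so do $\calD^2$ and, by functional calculus, $|\calD|$ as a first-order operator; consequently each iterated commutator $\delta^k(L)$ has vanishing principal symbol at each step, and Calder\'on-style arguments for order-zero operators yield bounded extensions to $L^2(\Omega,E)$.

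Granting the lemma, regularity is obtained as follows. For $a\in\calA^\infty_\calD$ both $a$ and $a^*$ preserve $\calH^\infty_\calD$; by the closed-graph theorem applied to the Fr\'echet topology on $\calH^\infty_\calD$ generated by the seminorms $u\mapsto \| \, |\calD|^k u \, \|$, $a$ acts continuously on $\calH^\infty_\calD$, hence by interpolation continuously on every $\calH^s_\calD$, and the lemma yields $a\in\bigcap_k\domain(\delta^k)$. For $da$, hypothesis \eqref{eq:+} gives $[\calD,a]u=[A,a]_+ u$ on $\scrC^\infty(\Omega,E)\supseteq\calH^\infty_\calD$; since $\calD=A_+$ and $a$ both preserve $\calH^\infty_\calD$, so does $[\calD,a]$, and a second application of the lemma gives $da\in\bigcap_k\domain(\delta^k)$. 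For the maximality claim, suppose $\mathcal{B}\subseteq\calA^0_\calD$ is any involutive subalgebra for which $(\mathcal{B},\calH,\calD)$ is a regular spectral triple, and let $b\in\mathcal{B}$; the easy direction of the lemma shows that both $b$ and $b^*$ extend continuously to each $\calH^s_\calD$, and hence preserve $\calH^\infty_\calD=\bigcap_{s\ge 0}\calH^s_\calD$, so $b\in\calA^\infty_\calD$.

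The main obstacle is the ``$\Leftarrow$'' implication of the lemma, that is, the passage from Sobolev-preservation to genuine $\calH$-boundedness of $\delta^k(L)$. Here the two structural assumptions on $A$ become essential: condition \eqref{eq:+} is what permits the commutators $[\calD,a],[\calD,[\calD,a]],\ldots$ to be computed inside Boutet de Monvel's calculus as $[A,a]_+,\ldots$ without spurious singular Green remainders disturbing the order count, while scalarity of the principal symbol of $A^2$ forces the principal symbol of each $\delta^k(L)$ to vanish, turning the iterated commutator into a genuine order-zero operator on $L^2(\Omega,E)$ rather than merely a bounded map between higher Sobolev spaces.
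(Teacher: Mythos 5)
The maximality argument and the easy direction of your proposed lemma are fine and essentially agree with the paper (which uses the identity $|\calD|^n(bu)=\sum_j\binom{n}{j}\delta^j(b)|\calD|^{n-j}u$ from Lemma 2.1 of \cite{Co13} to the same effect). But there is a genuine gap in the ``$\Leftarrow$'' direction of your key lemma, and that direction is precisely what you need for the regularity claim.

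First, as stated for a general $L\in\scrL(\calH)$, the lemma is false. Take $\calD$ with simple spectrum and eigenbasis $(e_n)_{n\ge1}$, $\calD e_n=ne_n$, and let $Le_n=e_{2n}$. Then $L$ is bounded on every $\calH^s_\calD$ (with norm $2^s$), yet $[|\calD|,L]e_n=ne_{2n}$ is unbounded on $\calH$, so $L\notin\domain(\delta)$. Continuity across the Sobolev scale generated by $\calD$ is therefore strictly weaker than membership in $\bigcap_k\domain(\delta^k)$; the correct criterion, which is what the paper invokes from Lemma 2.6 of \cite{IoLe}, is the more refined statement that each iterated commutator $b^{(k)}=[\calD^2,\cdot]^k(b)$ should extend continuously from $H^k(\Omega,E)$ to $L^2(\Omega,E)$. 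This is a condition on the commutators, not merely on $L$ itself, so the closed-graph/interpolation step on $\calH^\infty_\calD$ does not get you there.

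Second, even restricted to multiplication operators, your sketch of ``$\Leftarrow$'' does not go through. You invoke the functional calculus to treat $|\calD|$ as a first-order pseudodifferential operator and then argue that $\delta^k(L)$ ``has vanishing principal symbol at each step''. In the presence of boundary conditions this fails at the start: $|\calD|=(\calD^2)^{1/2}$ is in general \emph{not} an operator of Boutet de Monvel type, so it has no principal symbol in the relevant calculus and the Calder\'on-style order count is unavailable. Avoiding $|\calD|$ entirely is precisely why the paper works with $\calD^2=(A^2)_+$ (using hypothesis \eqref{eq:+} to write $(A_+)^\ell=(A^\ell)_+$) and computes $b^{(k)}=\bigl([A^2,\cdot]^k(b')\bigr)_+$ with $b'=a$ or $[A,a]$ inside the calculus, where scalarity of $\sigma_\psi(A^2)$ forces $b^{(k)}$ to have order exactly $k$ and gives the estimate $\|b^{(k)}u\|_{L^2}\le C_k\|u\|_{H^k}$. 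Your proof needs to be redone along these lines: replace the Sobolev-scale lemma by the \cite{IoLe}-type criterion and verify the required estimates for $b^{(k)}$ explicitly in Boutet de Monvel's calculus.
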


\begin{proof}
The proof is along the lines of that of Theorem 4.5 in \cite{IoLe}. 
Clearly, $(\mathcal{A}^\infty_\calD,\mathcal{H},\mathcal{D})$ is a spectral triple of dimension $n$, since 
$\calA^\infty_\calD$ is a $*$-subalgebra of $\calA^0_\calD$. 
Now observe that, by construction, if $b=a$ or 
$b=[\calD,a]$ with $a\in\calA^\infty_\calD$, then $b$ maps $\calH^\infty_\calD$ into itself.
Thus we can define the iterated commutators
 $$b^{(k)}:=[\calD^2,\cdot]^k(b):\calH^\infty_\calD\lra\calH^\infty_\calD,\qquad k\in\nz.$$
By Lemma 2.6 of \cite{IoLe}, to 
prove regularity of the spectral triple, it suffices to show that 
\begin{equation}\label{eq:k}
 \|b^{(k)}u\|_{L^2(\Omega,E)}\le C_k\|u\|_{H^k(\Omega,E)}\qquad\forall\;u\in \calH^\infty_\calD,\quad\forall\;k\in\nz,
\end{equation}
with constants $C_k$ not depending on $u$. 

Due to assumption \eqref{eq:+}, $(A_+)^\ell=(A^\ell)_+$ for every 
$\ell\in\nz$. Therefore, in case $b=a$, property \eqref{eq:k} immediately follows, since  
 $$b^{(k)}=([A^2,\cdot]^{(k)}(a))_+$$
is $($the restriction to $\Omega$ of$)$ a pseudodifferential operator of order $k$, since $A^2$ has scalar 
principal symbol. 

To verify \eqref{eq:k} in case of $b=[\calD,a]=[A_+,a]=[A,a]_+$, first observe that 
with $A$ also $b$ satisfies condition \eqref{eq:+}, since 
\begin{align*}
 [A_+,a]P_+&= A_+aP_+-aA_+P_+=A_+(aP)_+-a(AP)_+\\
  &=(AaP)_+-(aAP)_+=([A,a]P)_+;
\end{align*}
here we have used that $a$ is zero order differential. Then $b^{(k)}=([A^2_+,\cdot]^k(b)$ is a 
linear combination of terms $A^{2k_1}_+bA^{2k_2}_+$ with $k_1+k_2=k$. Applying repeatedly 
Property \eqref{eq:+}, any such term equals $(A^{2k_1}[A,a]A^{2k_2})_+$. We conclude that 
 $$([A^2_+,\cdot]^k(b)=\big([A^2,\cdot]^k([A,a])\big)_+,$$
and then can argue again as before to obtain the mapping property \eqref{eq:k}. 

For the last affirmation of the theorem, assume that 
$(\mathcal{A},\mathcal{H},\mathcal{D})$ is a regular spectral triple with $\calA$ 
being a $*$-subalgebra of $\calA^0_\calD$. Then, for $b=a$ or $b=a^*$ with $a\in\calA$, the following identity 
holds $($see Lemma 2.1 in \cite{Co13})$:$
\begin{align*}
 |\calD|^n (bu)=\sum_{j=0}^n\binom{n}{j} \delta^j(b)  |\calD|^{n-j}u \qquad\forall\;u\in\domain(|\calD|^n).
\end{align*}
This shows at once that $bu\in\calH^\infty_\calD$ provided $u\in\calH^\infty_\calD$. 
Thus $\calA\subseteq \calA^\infty_\calD$. 
\end{proof}

The precise description of $\calA^\infty_\calD$ is in general very cumbersome and even in specific examples 
it appears very difficult to provide an explicit expression of this algebra. However, the following is valid$:$

\begin{lemma}\label{thm:1carA}
Let $\calD=(A_+)_T$ be as described in Theorem \textnormal{\ref{th:spectral}} with $A$ differential    
and assume that the boundary condition is of the form $Tu=P S u|_{\partial\Omega}$ with a projection $P$ and 
a bundle homomorphsm $S$ on the boundary. 
Let $a\in\scrC^\infty(\Omega)$ and assume that there exists a 
smooth function $\varphi$ which is constant near every connected coponent of the boundary of $\Omega$ 
such that $a-\varphi$ vanishes to infinite order at $\pO$. Then $a\in\calA^\infty_\calD$. 
\end{lemma}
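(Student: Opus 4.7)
The plan is to write $a = \varphi + \psi$ with $\psi := a-\varphi$ vanishing to infinite order at $\pO$, and to check each summand separately; since $\bar a$ satisfies the same hypothesis with $\bar\varphi$ in place of $\varphi$, it suffices to show that multiplication by $a$ sends $\calH^\infty_\calD$ into itself. I will use throughout that, by elliptic regularity for the boundary value problem $(A,T)$, $\calH^\infty_\calD = \bigcap_k\domain{\calD^k} \subset \bigcap_k H^k(\Omega,E) = \scrC^\infty(\Omega,E)$.

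For the flat piece $\psi$, every coefficient of the differential operator $[A^k,\psi]$ vanishes to infinite order at $\pO$ together with $\psi$, so
\[
  A^k(\psi u) = \psi A^k u + [A^k,\psi]\,u
\]
vanishes to infinite order at $\pO$ for every smooth $u$. Hence $T(A^k(\psi u)) = PS(A^k(\psi u))|_{\pO} = 0$ for every $k\ge 0$, which inductively gives $\psi u\in\domain{\calD^{k+1}}$ for all $k$, i.e.\ $\psi u\in\calH^\infty_\calD$.

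To handle $\varphi$, pick cutoffs $\chi_i\in\scrC^\infty(\Omega)$ with $\chi_i\equiv 1$ on a collar neighborhood of the connected component $\partial\Omega_i$ and $\chi_i\equiv 0$ near the other components. If $\varphi\equiv c_i$ near $\partial\Omega_i$, then $\varphi-\sum_i c_i\chi_i$ vanishes in an open neighborhood of $\pO$ and is covered by the previous step, so it remains to show each $\chi_i\in\calA^\infty_\calD$. Expanding $A^k(\chi_i u)=\chi_i A^k u+[A^k,\chi_i]u$, every coefficient of $[A^k,\chi_i]$ carries at least one derivative of $\chi_i$ and is therefore supported in the transition region, which is disjoint from $\pO$. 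Consequently $[A^k,\chi_i]u$ vanishes near $\pO$ and contributes nothing to $T$, so the problem reduces to showing
\[
 T(\chi_i A^k u) = P\bigl(\chi_i|_{\pO}\cdot v_k\bigr)=0,\qquad v_k:=S(A^k u)|_{\pO}\in\mathrm{ker}\,P,
\]
for every $k\ge 0$.

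I expect this last assertion to be the main obstacle. The function $\chi_i|_{\pO}$ is the characteristic function of the clopen subset $\partial\Omega_i\subset\pO$, so all its derivatives vanish, and the full homogeneous symbol of $[P,\chi_i|_{\pO}]$ is trivially zero; however, the off-diagonal Schwartz kernel of a classical pseudodifferential projection $P$ between different components of $\pO$ is only smoothing, not a priori identically zero. The argument must therefore exploit the fact---automatic for the APS projection and, more generally, for any projection obtained by spectral calculus from an operator intrinsic to each component of $\pO$---that $P$ is block-diagonal with respect to $\pO=\bigsqcup_i\partial\Omega_i$, and hence commutes exactly with multiplication by $\chi_i|_{\pO}$. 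Granting this, $P(\chi_i|_{\pO}\cdot v_k)=\chi_i|_{\pO}\cdot Pv_k=0$ and the induction on $k$ closes, yielding $\chi_i u\in\domain{\calD^k}$ for every $k$ and hence $\chi_i\in\calA^\infty_\calD$.
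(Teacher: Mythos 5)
Your treatment of the flat piece $\psi=a-\varphi$ is exactly the paper's: $\psi u$ and hence every $A_+^k(\psi u)$ vanish to infinite order at $\partial\Omega$, so all iterated boundary conditions $T(A_+^k(\psi u))$ vanish trivially, and likewise for $\psi^*$. The substantive difference is in the term $\varphi$. The paper dispatches it with the single unproved sentence ``Since $\varphi$ belongs to $\calA^\infty_\calD$, we may assume that $\varphi\equiv 0$''; you instead decompose $\varphi$ into the cutoffs $\chi_i$ and isolate precisely what that assertion requires: that for $v\in\ker P$ one has $\chi_i|_{\partial\Omega}\cdot v\in\ker P$, i.e.\ $P$ is block-diagonal with respect to $L^2(\partial\Omega)=\bigoplus_i L^2(\partial\Omega_i)$. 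So the basic route is the same, but your proof is more careful and makes explicit a hidden hypothesis that the paper's own proof glosses over.

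Your worry that this block-diagonality is not automatic is well-founded: for a general classical pseudodifferential projection on a disconnected $\partial\Omega$ the full symbol of $[P,\chi_i|_{\partial\Omega}]$ vanishes, but the off-diagonal smoothing part of the Schwartz kernel of $P$ need not, so $\ker P$ need not be stable under multiplication by $\chi_i|_{\partial\Omega}$. As you observe, the property does hold whenever $P$ is produced by the spectral calculus of an operator intrinsic to each component; this covers the paper's APS projection $P_{\geq}$ because the tangential operator $B$ is differential, hence local, hence preserves each summand $L^2(\partial\Omega_i)$, and so do all its spectral projections. (For the modified projection $P_{>a}+P_g$ in the odd-dimensional non-invertible case one should additionally choose the finite-rank unitary $g$ to respect the decomposition of $K^\pm_{\partial\Omega}(a)$ into components, which is always possible.) Granting that, your proof closes correctly, and in fact improves on the paper's by naming the commutation property the argument actually uses.
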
 
\begin{proof}
Since $\varphi$ belongs to $\calA^\infty_\calD$, we may assume that $\varphi\equiv0$. 
If $u\in\calH^\infty_\calD$ then $au$ also vanishes to infinite order at 
the boundary, hence so does $A_+^N(au)$ for arbitrary $N\in\nz$. Therefore 
 $$T(A_+^N(au))=P S (A_+^N(au)|_{\partial\Omega})=0,$$
showing that $au\in\domain(\calD^{N+1})$. For the same reason, $a^*u\in\domain(\calD^{N+1})$. 
Since $N$ is arbitrary it follows that both $a$ and $a^*$ preserve $\calH^\infty_\calD$. 
\end{proof}

As we shall see in the sequel, cf.\ Theorem \ref{thm:closure} below, it seems easier to describe the closure 
in $\scrL(\calH)$ of $\calA^\infty_\calD$ $($or, if we consider $\calA^\infty_\calD$ as a subspace of the 
continuous functions on $\Omega$, its closure with respect to the supremum-norm$)$. 
As already said, this is of significance in view of Connes' reconstruction Theorem; in fact, 
in case a spectral triple $\pt{\calA, \calH,\calD}$ satisfies Connes' axioms, the reconstructed manifold 
is homeomorphic, as a topological space, to $\mathrm{Spec}(\overline{\calA})$.
We finish this subsection with a technical lemma which we shall employ below in this context. 

\begin{lemma}\label{lem:commute}
Let $\pt{A_+}_T$ be as described in Theorem \textnormal{\ref{th:spectral}} with $A$ differential 
and assume that the 
boundary condition is of the form $Tu=P S u|_{\partial\Omega}$ with an orthogonal projection $P$ and 
a bundle isomorphism $S$ on the boundary. Let $a\in\scrC^\infty(\Omega)$ and assume that both $a$ and 
$a^*$ preserve $\mathrm{dom}\pt{\pt{A_+}_T}$. Then $a|_{\partial\Omega}$ commutes with $P$. 
\end{lemma}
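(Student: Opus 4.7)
The plan is to translate the condition ``$a$ preserves $\mathrm{dom}((A_+)_T)$'' into an algebraic condition on $\alpha := a|_{\partial\Omega}$ and $P$ at the level of the boundary, then use the same condition for $a^*$ together with the orthogonality of $P$ to deduce commutation.

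First I would note that, since $a \in \scrC^\infty(\Omega)$ is a scalar function, multiplication by $a$ commutes with the restriction $\gamma_0$ and with the bundle isomorphism $S$ (both act fibrewise, and $\alpha$ is a scalar). Hence for $u \in H^1(\Omega,E)$ one has
\begin{equation*}
 PS(au)|_{\pO} = P\alpha\, S u|_{\pO}.
\end{equation*}
The hypothesis that $a$ preserves $\mathrm{dom}((A_+)_T)$ therefore says: whenever $PSu|_{\pO}=0$ for $u \in H^1(\Omega,E)$, then also $P\alpha\,Su|_{\pO}=0$. Since the trace $\gamma_0:H^1(\Omega,E)\to H^{1/2}(\pO,E|_{\pO})$ is surjective and $S$ is a bundle isomorphism, the vectors $v = Su|_{\pO}$ range over all of $H^{1/2}(\pO,E|_{\pO})$. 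Applying the condition to vectors of the form $v = (1-P)w$ gives
\begin{equation*}
 P\alpha(1-P)w = 0 \qquad \forall\, w \in H^{1/2}(\pO,E|_{\pO}),
\end{equation*}
i.e., $P\alpha(1-P)=0$, or equivalently $P\alpha = P\alpha P$.

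The same argument applied to $a^*$ (whose boundary restriction is $\bar\alpha$) yields $P\bar\alpha(1-P)=0$. Taking the Hilbert-space adjoint on $L^2(\pO,E|_{\pO})$ and using that $P$ is an orthogonal projection (so $P^*=P$ and $(1-P)^*=1-P$) together with $\bar\alpha^*=\alpha$, one obtains $(1-P)\alpha P=0$, i.e., $\alpha P = P\alpha P$. Combining the two identities gives $P\alpha = P\alpha P = \alpha P$, which is exactly the commutation of $\alpha$ with $P$.

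The only point requiring a little care is the surjectivity step: one must know that $\{Su|_{\pO} : u \in H^1(\Omega,E)\}$ is the full space $H^{1/2}(\pO,E|_{\pO})$, so that $(1-P)w$ for arbitrary $w$ really belongs to the kernel of $P$ on this trace space. This is immediate from surjectivity of $\gamma_0$ and invertibility of $S$, so I do not expect any genuine obstacle; the whole argument is purely algebraic once the boundary conditions have been decoded.
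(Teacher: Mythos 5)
Your proof is correct and takes essentially the same route as the paper: extract $P\,\alpha(1-P)=0$ from the domain-preservation hypothesis (using surjectivity of the trace and invertibility of $S$ to realize every vector $(1-P)w$ as a boundary value $Su|_{\pO}$), repeat with $a^*$, and pass to adjoints using orthogonality of $P$. The paper phrases the surjectivity step by choosing a smooth $u$ with $u|_{\pO}=S^{-1}(1-P)\varphi$ rather than invoking $\gamma_0$-surjectivity on $H^1$, but this is cosmetic.
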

\begin{proof}
Let $\varphi\in\scrC^\infty(\partial\Omega,E|_{\partial\Omega})$ arbitrary and $u$ be some function in 
$\scrC^\infty(\Omega,E)$ such that $u|_{\partial\Omega}=S^{-1}(1-P)\varphi$. 
Then $u\in\mathrm{Dom}(\calD)$ and hence, by assumption, 
  $$0=T(au)=P (\wt{a} (1-P)\varphi),\qquad \wt{a}:=a|_{\partial\Omega}.$$
Thus $P \wt{a} (1-P)=0$, i.e., $P \wt{a}=P \wt{a} P$. 
Replacing $a$ by $a^*$ shows that $P \wt{a}^*=P \wt{a}^* P$. Passing to 
adjoints yields $\wt{a} P=P \wt{a} P=P \wt{a}$. 
\end{proof}

\subsection{Self-adjoint realizations of Dirac operators}\label{sec:4.1} 

  In order to define a Dirac operator we suppose that $(\Omega, E)$ is a Clifford module 
  and that the bundle $E$ has an Hermitian structure $\langle\cdot,\cdot\rangle$ and a connection
  $\nabla$ compatible with the
  Clifford module structure.
  We call $D$ the associated Dirac operator; it is symmetric and locally has the form
  \[
    \pt{D u}(x)= \sum_{j=1}^n c\pt{e_j} \pt{\nabla_{e_j} u }(x), \quad u\in \scrC^{\infty}\pt{\Omega, E}
  \]
   where $\{e_1, \ldots, e_n\}$ is an orthonormal frame of $T\Omega$ at $x\in \Omega$
   and $c(\cdot)$ is the Clifford multiplication.

Depending on the parity of the dimension of $\Omega$, we can complete $D$ with 
APS-type boundary conditions to an elliptic, self-adjoint realization. 
  
\textbf{The case of even dimension:}
      In this case the bundle $E$ canonically splits in two subbundles $E_+$ and $E_-$
      via the chirality operator, i.e., $E=E_+\oplus E_-$,  
      and $D: \scrC^{\infty}(\Omega, E)\to \scrC^{\infty}(\Omega, E)$ can be identified with 
\begin{align}
\begin{split}
 D=\begin{pmatrix}0&D^-\\ D^+&0\end{pmatrix},\qquad 
 D^\pm: \scrC^{\infty}(\Omega, E_\pm) \to \scrC^{\infty}(\Omega, E_\mp),
\end{split}
\end{align}
where $\pt{D^+}^*= D^-$. 
Recall that, in a collar neighborhood of the boundary $\pO$, the metric is assumed to be of product type  
$($in case of Dirac operators this is not a restrictive assumption as it can be always achieved up to conjugation by 
unitary isomorphism, see the appendix of \cite{BLZ09}, for instance$)$. 
Then one can write, near the boundary, 
      \begin{align}
        \label{eq:dirprod}
        D= \Gamma(x^\prime) \pt{\partial_{x_n}+ B}
      \end{align}
where $(x^\prime, x_n)$ are the normal and tangential coordinates, respectively, and an endomorphism 
$\Gamma:E_+ \oplus E_- \to E_- \oplus E_+$ which inverts the chirality
and does not depend on the normal direction. In fact, it corresponds to the Clifford multiplication with the inward 
normal vector; in particular, $\Gamma^2=- \mathrm{Id}_E$. 
The so-called tangential operator 
      \begin{align*}
      B: \scrC^{\infty}(\pO, E_+|_{\pO}\oplus E_-|_{\pO}) \to \scrC^{\infty}(\pO, E_+|_{\pO}\oplus E_-|_{\pO})
      \end{align*}
is a self-adjoint elliptic differential operator of first order preserving the splitting $E=E_+\oplus E_-$. 
Since $B$ is elliptic and self-adjoint, there are well defined eigenvalues
$\{\lambda_k\}_{k\in \gz}$ and eigenfunctions $\{f_k\}_{k \in \gz}$ which form an orthonormal
base of $L^2(\pO, E)$. We consider 
      \begin{align*}
        P_{\geq}: L^2\pt{ \pO, E } \lra  L^2\pt{\pO, E},\qquad 
        u \mapsto \sum_{\lambda_k \geq 0} \spk{u,f_k}f_k,         
      \end{align*}
that is the orthogonal projection onto the span of the eigenfunctions corresponding to non-negative eigenvalues. 
We set $P_<=1-P_{\ge}$.  The APS boundary condition is then defined as
 $$T_\aps= \begin{pmatrix} 
                     P_{\geq} \gamma_0&0 \\  0&P_{<} \Gamma^*\gamma_0 
                    \end{pmatrix}
     :H^s(\Omega, E_+\oplus E_-)\lra H^{s-1/2}(\pO,E_+\oplus E_-).$$
Then we let $\calD=\calD_\aps$ denote the realization of $D$ with subject to the condition $T_\aps$. 
The splitting $E=E_+\oplus E_-$ induces an identification 
$\calD_\aps=\begin{pmatrix}0&D^-_\aps\\ D^+_\aps&0\end{pmatrix}$ 
with the operators $D^\pm_\aps$ given by the action of $D^\pm$ on the domains
\begin{align}\label{eq:diracaps}
\begin{split}
 \mathrm{dom}(D^+_\aps)&=\big\{ u \in H^1(\Omega, E_+)\mid P_{\geq}u|_{\pO}=0\big\}, \\
 \mathrm{dom}(D^-_\aps)&=\big\{ u \in H^1(\Omega, E_-)\mid P_{<}(\Gamma^*u|_{\pO})=0\big\}. 
\end{split}
\end{align} 
Theorem \ref{thm:self-adjoint} implies that $\calD_\aps$ is self-adjoint. 
Moreover, by Theorem \ref{th:spectral}, both  
$(\calA^{0}_{\calD},\calH, \calD)$ and 
$(\calA^{\infty}_{\calD},\calH, \calD)$ with $\calH=L^2(\Omega,E)$ and $\calD=\calD_{\aps}$  
are spectral triples of dimension $n$, the second one being regular. 

\begin{remark} \label{rem:evenspectral}
If $\Omega$ were without boundary, it is known that $(\scrC^{\infty}(\Omega), L^2(\Omega, E), \calD)$
defines a regular spectral triple. Moreover, in the even dimensional case, it is possible to define a grading
$\gamma$ of $L^2(\Omega, E)$ such that the spectral triple is even, i.e., 
$\gamma^2=\mathrm{Id}$, $\gamma^*=\gamma$, $\gamma a=a \gamma$ for all $a \in \scrC^{\infty}(\Omega)$ and 
$\gamma \calD +\calD \gamma=0$. The grading is related to the splitting induced by the chirality.
 
The spectral triples $(\calA^{0}_{\calD},\calH, \calD)$ and $(\calA^{\infty}_{\calD},\calH, \calD)$
introduced above are also even, since the grading operator preserves the domain of $\calD_{\aps}$. 
This is not true for the spectral triples introduced in \cite{IL13}, based on chiral boundary conditions. 
Dealing with local boundary conditions, as chiral boundary conditions, the grading does not preserve the domain of
the Dirac operator, see e.g. the example on the disk in \cite{CC13}. 
\end{remark}

\textbf{The case of odd dimension:}
We suppose again that the metric is of product type near the boundary and thus $D$ has the form 
\eqref{eq:dirprod}. Also in this case it is well known that the APS boundary conditions define  
elliptic realizations of the Dirac operator. Since $D$ is essentially self-adjoint, 
\begin{align}
  \label{eq:anticom}
  \Gamma B=B \Gamma^*= - B \Gamma,
\end{align}
i.e., $\Gamma$ inverts the splitting induced by the spectrum of $B$.
For Green's formula \eqref{eq:greens-formula} we find $\mathfrak{A}=\Gamma^*=-\Gamma$.
By \eqref{eq:anticom}, we obtain
\[
  \mathfrak{A}=-\Gamma: \mathrm{ker}\,P_{\geq}\lra \mathrm{ker}\,P_{\leq}. 
\]
In case $B$ is invertible, 
$\mathrm{ker}\,P_{\leq}=\mathrm{ker}\,P_{<}= (\mathrm{ker}\,P_{\geq})^\perp$. 
Hence, by Theorem \ref{thm:self-adjoint}, the realization $\calD=\calD_{\aps}$ of $D$ 
subject to the boundary condition $T=P_{\geq 0} \gamma_0$ is self-adjoint. 

In case $B$ is not invertible, the usual $\aps$-boundary condition does not give a 
self-adjoint realization and one has to proceed differently, as is discussed in detail in \cite{DF94}. 
Let $a^2 \in \rz \setminus \sigma (B^2)$ and  let 
 $E(\lambda) \subset L^{2}(\partial \Omega, E|_{\partial \Omega})$,  
denote the eigenspace associated to the eigenvalue $\lambda \in \sigma (B)$.
Since the boundary is even dimensional, there is a splitting $E|_{\pO}=E|_{\pO}^+\oplus E|_{\pO}^-$,
inducing a splitting of each eigenspace. We set
 \[
   K^{\pm}_{\pO}(a)= \mathop{\mbox{\Large$\oplus$}}_{-a<\lambda<a}E^{\pm}(\lambda)
 \]
and let $P_{g}\in\scrL( L^{2}\pt{\pO, E|_{\pO}})$ be the orthogonal projection onto the graph of 
an $L_2$-unitary map $g: K^{+}_{\pO}(a)\to K^{-}_{\pO}(a)$. Then the trace operator
 \[
   T=\pt{P_{>a}+P_{g}}\gamma_0
 \]
is an APS-type boundary condition as defined in Section \ref{sec:real} 
$($note that $P_{>a}P_{g}=P_gP_{>a}=0$, hence $P_{>a}+P_g$ indeed is a projection$)$
and induces an elliptic, self-adjoint realization of the Dirac operator, again denoted by 
$\calD$ $($of course this operator depends on the choice of $a$ and $g)$. 

In any case, by Theorem \ref{th:spectral}, we can conclude that both 
$(\calA^{0}_{\calD},L^2\pt{\Omega, E} , \calD)$ and 
$(\calA^{\infty}_{\calD},L^2\pt{\Omega, E} , \calD)$ are
 spectral triples of dimension $n$, the second one being regular.

\begin{remark} \label{rem:genproj}
In \cite{Sc95}, the realizations of the Dirac operator subject to 
$\aps$-type conditions $T=P\gamma_0$ are analyzed, where $P$ is a zero order pseudodifferential projection on the boundary.
In case of odd dimension and invertible tangential operator, it is proven that such a realization is elliptic and 
self-adjoint if, and only if, $P=P_g$ is the orthogonal projection onto the graph of an $L^2$-unitary isomorphism 
$g: E^+_{\pO}\to E^-_{\pO}$. 
The case of a non-invertible tangential operator is studied in \cite{DF94}. 
It is proven that $T=P\gamma_0$ leads to a elliptic and self-adjoint extension of $D_{P_{>-a}} \gamma_0$ 
if and only if $P=P_{>a}+P_g$ with $P_g$ described above. 
  
We want to stress once more that all these boundary conditions are of APS-type and fit in our general framework.
\end{remark}

\subsection{Spectral triples based on Dirac operators} 

We shall now study $\calA^0_{\calD}$ and $\calA^\infty_{\calD}$ in case of $\calD$ being an above 
described realization of the Dirac operator. 

\begin{proposition}\label{prop:const}
Let $(\calA^0_{\calD},\calH,\calD)$  be the spectral triple associated with a Dirac operator 
$D$ and $\aps$-type boundary conditions as described in the previous Subsection $\mathrm{\ref{sec:4.1}}$. 
Let $a\in \calA^0_{\calD}$ such that both $a$ and $a^*$ preserve $\mathrm{Dom}(\calD)$. 
Then $a|_{\partial\Omega}$ is locally constant.  
\end{proposition}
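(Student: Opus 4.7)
The plan is to combine Lemma \ref{lem:commute} with a principal-symbol analysis of the spectral projection $P$ appearing in the APS-type boundary condition.

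First I would invoke Lemma \ref{lem:commute}. In each of the three settings of Section \ref{sec:4.1} the boundary condition is of the form $Tu = PSu|_{\partial\Omega}$ with an orthogonal projection $P$ and a bundle isomorphism $S$, so the lemma yields $[\wt{a}, P] = 0$, where $\wt{a} := a|_{\partial\Omega}$ acts as a scalar multiple of the identity on each fibre of $E|_{\partial\Omega}$. In the even case $P = \mathrm{diag}(P_\geq, P_<)$, and in the odd cases $P = P_\geq$ or $P = P_{>a} + P_g$; in all cases the commutation with $P$ reduces to the commutation with $P_\geq = \chi_{[0,\infty)}(B)$, since $\wt{a}$ commutes with the block decomposition and with any finite-rank piece of vanishing principal symbol.

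Next I would pass to principal symbols. The projection $P_\geq$ lies in $L^0_\cl(\partial\Omega; E|_{\partial\Omega})$ with principal symbol $p(x', \xi')$ equal to the orthogonal projection onto the positive eigenspace of $\sigma(B)(x', \xi')$. By \eqref{eq:dirprod}, $\sigma(B)(x', \xi')$ is (up to conjugation) Clifford multiplication by $\xi'$; in particular it is linear in $\xi'$, so $p(x', \cdot)$ is positively $0$-homogeneous and satisfies $p(x', -\xi') = 1 - p(x', \xi')$. Since $\wt{a}$ is scalar, the commutator $[\wt{a}, P_\geq]$ is classical pseudodifferential of order $-1$ with principal symbol $-\tfrac{1}{i}\sum_j (\partial_{x_j}\wt{a})(x')(\partial_{\xi_j} p)(x', \xi')$, which must vanish identically. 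Writing $v(x') := \nabla\wt{a}(x')$, this amounts to $v(x') \cdot \nabla_{\xi'} p(x', \xi') = 0$ for all $\xi' \neq 0$.

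I would then derive a contradiction from $v(x') \neq 0$. Assuming $\dim \partial\Omega \geq 2$, pick $\xi'$ linearly independent from $v := v(x')$; the line $\{\xi' + tv : t \in \rz\}$ avoids the origin, and $t \mapsto p(x', \xi' + tv)$ is constant. Letting $t \to +\infty$ and using positive $0$-homogeneity gives $p(x', \xi') = p(x', v)$, while letting $t \to -\infty$ gives $p(x', \xi') = p(x', -v) = 1 - p(x', v)$. Hence $p(x', v) = \tfrac{1}{2}\mathrm{Id}$, which is impossible for an orthogonal projection onto a non-trivial eigenspace. Therefore $v(x') = 0$ for every $x'$, and $\wt{a}$ is locally constant on $\partial\Omega$.

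The hard part is the low-dimensional case $\dim\partial\Omega = 1$, in which any two non-zero vectors are linearly dependent, the line $\xi' + \rz v$ necessarily passes through the origin, the preceding asymptotic argument fails, and in fact $p$ is piecewise constant in $\xi'$, so the order $-1$ principal symbol of the commutator vanishes automatically. On each circular component of $\partial\Omega$ one would instead use the full spectral decomposition of $B$: the relation $[\wt{a}, P_\geq] = 0$ forces $\spk{\wt{a} f_j, f_k} = 0$ whenever $\lambda_j$ and $\lambda_k$ have opposite signs, and WKB-type asymptotics of the eigenfunctions $f_k$ then force every non-constant Fourier mode of $\wt{a}$, in a trivialising frame, to vanish, again yielding local constancy.
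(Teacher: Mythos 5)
For $\dim\partial\Omega\ge 2$ your argument is essentially the paper's: apply Lemma \ref{lem:commute}, pass to the symbol level, observe that the order-$(-1)$ homogeneous component of $[\wt{a},P_\ge]$ must vanish, and deduce $\nabla\wt{a}=0$. You replace the paper's Lemma \ref{lem:const} by the identity $p(x',-\xi')=1-p(x',\xi')$, giving the cleaner contradiction $p(x',v)=\tfrac12$; this is a mild gain in elegance but the same underlying homogeneity argument.

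Your concern about $\dim\partial\Omega=1$ is well founded and in fact identifies a real incompleteness in the paper's proof. Lemma \ref{lem:const} is false when $m=1$ (take $q=\mathrm{sign}$, so $q'\equiv0$ but $q\not\equiv\mathrm{const}$); its proof becomes vacuous because $\rz^m\setminus V$ is empty once $v\ne0$. The paper's footnote only disposes of $\dim\Omega=1$, where the boundary is a finite point set; it does not cover the even-dimensional case $\dim\Omega=2$, in which $\partial\Omega$ is a disjoint union of circles --- precisely the setting of the disk example in Section \ref{sec:exe}. Your proposed repair (pass to the exact spectral decomposition of $B$ on each circle and exploit $P_<\wt{a}P_\ge=0=P_\ge\wt{a}P_<$) is the right strategy, but as written it is only a sketch: the appeal to ``WKB-type asymptotics'' is not a proof. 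What is needed is the observation that on a circle $B$ is a first-order self-adjoint elliptic operator whose eigenfunctions are, in a trivialising frame, comparable to exponentials $e^{in\theta}$, after which the conclusion follows by the same Fourier-coefficient computation as for the Szeg\H{o} projection. In short, your proposal matches the paper where the paper is correct, correctly flags a gap the paper itself overlooks, and gestures at the right fix without carrying it out.
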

\begin{proof}
In case  $\Omega$ is of even dimension, both $a$ and $a^*$ preserve the domain of 
$D^+_{\aps}$, cf.\ \eqref{eq:diracaps}. In the case of odd dimension let us first assume that the tangential operator 
is invertible. In both cases, the boundary condition is $T=P_{\geq}\gamma_0$ and Lemma \ref{lem:commute} 
implies that both $a$ and $a^*$ commute with $P_{\geq}$.

It is well known, see \cite[\S 14]{BW92}, that $P_{\geq}$ is a classical pseudodifferential
operator of order zero, having principal symbol
 \begin{align} 
    p_0(x^\prime, \xi^\prime)= \frac{b^\prime(x^\prime, \xi^\prime)+1}{2},
 \end{align}
where $b^\prime(x^\prime, \xi^\prime)$ is the principal symbol of the operator $(1+B^2)^{-1/2}B$.
In the even dimensional case, $B$ can be identified with a Dirac operator on the boundary, therefore
\begin{align*}
  b^\prime(x^\prime, \xi^\prime)= \frac{i \xi^\prime\cdot} {|\xi^\prime|}, \quad 
  \xi^\prime\cdot \text{ being the Clifford multiplication on }\partial \Omega.
\end{align*}
In particular, $b'(x^\prime, \xi^\prime)$ is not constant as a function of 
$\xi^\prime$.
In the odd dimensional case it is also possible to give the explicit expression of the principal symbol of
$B$ and state that it is not constant, see e.g. \cite{DFerra}.\footnote{Here, we are supposing the dimension
to be at least two. The one dimensional case is trivial: clearly the restriction to the boundary is constant
since it is the evaluation at one point.}
For notational convenience, let us now simply write $a$ instead of $a|_{\partial\Omega}$.  
Since $a P_{\geq}= P_{\geq} a$, 
in particular, the local symbols of $a P_{\geq}$ and $P_{\geq} a$ are equal.
Let us suppose that the symbol of $P_{\geq}$ has the asymptotic expansion into homogeneous components  
$\sum\limits_{j=0}^{+\infty} p_{-j}(x^\prime, \xi^\prime)$. Then we obtain
 $$ a(x^\prime) \sum_{j=0}^{+ \infty} p_{-j}(x^\prime, \xi^\prime)\sim 
     \sum_{|\alpha|=0}^{+\infty} \frac{1}{\alpha!}\partial_{\xi^\prime}^{\alpha}
     \Big(\sum_{j=0}^{+ \infty} p_{-j}(x^\prime, \xi^\prime)\Big)
      D_{x^\prime}^\alpha a(x^\prime).$$
The components of zero order coincide, 
since $a$ is scalar-valued. Equality of the components of order $-1$ means 
 \begin{align}
  \label{eq:simbolo1}
  \sum_{|\alpha|=1}\partial_{\xi^\prime}^\alpha p_0(x^\prime, \xi^\prime) 
   D^{\alpha}_{x^\prime} a(x^\prime)=(\nabla_{\xi^\prime} p_0(x^\prime, \xi^\prime),\nabla a(x^\prime))=0.
 \end{align}
Since both $a$ and $a^*$ preserve the domain also $a+a^*$ and $(a-a^*)/i$ preserves the domain.
Therefore, it is not a restriction to suppose that $a$ is real valued.
Since $p_0$ is not constant,
the following Lemma \ref{lem:const} implies $\nabla a=0$, i.e., $a$ is locally constant on the boundary. 

In the odd case, if $B$ is not invertible, the involved projection is $P=P_{>0}+P_{g}$. Since it differs from  
$P_{\geq}$ by a finite-dimensional, smoothing operator, the homogeneous components of $P$ coincide with 
those of $P_{\geq}$ and we can argue as above. 
\end{proof}

Notice that Proposition \ref{prop:const} holds infact true for all boundary conditions described in 
Remark \ref{rem:genproj}.

\begin{lemma}\label{lem:const}
Let $q\in\scrC^1(\rz^m\setminus\{0\})$ be positively homogeneous of degree $0$, $v\in\rz^m$, and 
 $$(\nabla q(\eta),v)=0\qquad\forall\;\eta\not=0.$$
Then either $v=0$ or $q\equiv\mathrm{const}$. 
\end{lemma}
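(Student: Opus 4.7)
My plan is to assume $v \neq 0$ and show that $q$ must be identically constant. The crucial geometric observation is that whenever $\eta \in \mathbb{R}^m \setminus \{0\}$ is not a scalar multiple of $v$, the entire affine line $\{\eta + tv : t \in \mathbb{R}\}$ is contained in $\mathbb{R}^m \setminus \{0\}$: any zero of $\eta + tv$ would force $\eta = -tv$, contradicting non-proportionality. On such a line the chain rule together with the hypothesis gives $\frac{d}{dt}q(\eta + tv) = (\nabla q(\eta + tv), v) = 0$, hence $q(\eta + tv) = q(\eta)$ for all $t \in \mathbb{R}$.

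Next I would combine this with positive homogeneity of degree $0$. For every $t$,
\begin{equation*}
  q(\eta) = q(\eta + tv) = q\bigl((\eta + tv)/\|\eta + tv\|\bigr).
\end{equation*}
Letting $t \to \pm\infty$, the unit vector $(\eta + tv)/\|\eta + tv\|$ tends to $\pm v/\|v\|$, a nonzero point at which $q$ is continuous. Passing to the limit on the right-hand side yields $q(\eta) = q(v)$ and $q(\eta) = q(-v)$; in particular $q(v) = q(-v)$ and $q$ takes the single value $q(v)$ at every point not proportional to $v$.

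It remains to dispose of the points $\eta = \lambda v$ with $\lambda \neq 0$. For $\lambda > 0$, positive homogeneity gives $q(\lambda v) = q(v)$; for $\lambda < 0$, the same gives $q(\lambda v) = q(-v)$, and $q(-v) = q(v)$ was just established. This shows $q \equiv q(v)$, completing the proof.

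The whole argument hinges on the opening observation—that a line in direction $v$ through a non-parallel point misses the origin—after which the one-variable constancy of $q$ along that line becomes available and the rest is a limit argument exploiting positive homogeneity and continuity. This is also where the implicit assumption $m \geq 2$ enters: in the one-dimensional case every nonzero vector is proportional to $v$, the first step is vacuous, and the conclusion in fact fails (take $q = +1$ on $(0,\infty)$ and $q = -1$ on $(-\infty,0)$). I expect no other obstacle.
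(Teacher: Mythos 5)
Your proof is correct and takes essentially the same route as the paper: differentiate $q$ along lines in direction $v$ based at points off the span of $v$, conclude constancy along each such line, then use homogeneity of degree zero to pass to the limit $t\to\infty$ and obtain $q(\eta)=q(v)$. The only minor difference is at the final step — the paper invokes continuity of $q$ on $\rz^m\setminus\{0\}$ to cover the points proportional to $v$, whereas you use both limits $t\to\pm\infty$ to get $q(v)=q(-v)$ and then homogeneity; your explicit observation that $m\ge 2$ is needed (with the $m=1$ counterexample) is a correct clarification that the paper leaves implicit.
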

\begin{proof}
Assume $v\not=0$. Let $V$ denote the span of $v$. Then, for arbirary $\eta\in\rz^m\setminus V$, 
 $$\frac{d}{dt}q(\eta+tv)=(\nabla q(\eta+tv),v)=0 \qquad\forall\;t\in\rz.$$
Thus $t\mapsto q(\eta+tv)$ is constant in $t$. Hence, using the homogeneity, 
 $$q(\eta)=q(\eta+tv)=q(\eta/t+v)\xrightarrow{t\to+\infty}q(v)\qquad\forall\;\eta\in\rz^m\setminus V.$$
By continuity of $q$ on $\rz^m\setminus\{0\}$, it follows that $q\equiv q(v)$.  
\end{proof}

\begin{theorem}\label{thm:closure}
Let $(\calA^\infty_{\calD},\calH,\calD)$  be the spectral triple associated with a Dirac operator $D$ and 
$\aps$-type boundary conditions as described in the previous Subsection $\mathrm{\ref{sec:4.1}}$. 
Then the closure of $\calA^\infty_\calD$ in 
$\scrL(\calH)$ is isomorphic to 
 $$\scrC_\partial(\Omega)
     :=\big\{a\in\scrC(\Omega)\mid a|_{\partial\Omega}\text{ is locally constant}\big\}.\footnotemark$$
\end{theorem}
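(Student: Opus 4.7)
The plan is to realize the closure of $\calA^\infty_\calD$ in $\scrL(\calH)$ through the representation $a\mapsto M_a$ (multiplication operator). Since each $a\in\scrC(\Omega)$ acts on $\calH=L^2(\Omega,E)$ with $\|M_a\|_{\scrL(\calH)}=\|a\|_\infty$, this representation is an isometric $*$-homomorphism of $\scrC(\Omega)$ into $\scrL(\calH)$. Hence the closure of $\calA^\infty_\calD$ in operator norm is isometrically isomorphic, via this map, to the sup-norm closure of $\calA^\infty_\calD$ viewed as a subalgebra of $\scrC(\Omega)$, and the task reduces to proving that this uniform closure equals $\scrC_\partial(\Omega)$.

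For the inclusion $\subseteq$, I would invoke Proposition \ref{prop:const}: every $a\in\calA^\infty_\calD\subseteq\calA^0_\calD$ has $a|_{\partial\Omega}$ locally constant, i.e., $\calA^\infty_\calD\subseteq\scrC_\partial(\Omega)$. It then suffices to observe that $\scrC_\partial(\Omega)$ is closed in $\scrC(\Omega)$ under the sup norm: since $\partial\Omega$ is a compact smooth manifold it has finitely many connected components, each of which is open and closed, so ``locally constant on $\partial\Omega$'' is equivalent to ``constant on each connected component of $\partial\Omega$'', and this condition is preserved by uniform limits.

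For the reverse inclusion $\supseteq$, fix $a\in\scrC_\partial(\Omega)$ and $\eps>0$. First approximate $a$ uniformly by a smooth function $a_1\in\scrC^\infty(\Omega)$ with $\|a-a_1\|_\infty<\eps/4$ (e.g.\ by extending $a$ to the smooth double of $\Omega$ and mollifying). Let $c$ denote the (smooth) locally constant boundary value $a|_{\partial\Omega}$, extended to the collar neighborhood $U\cong\partial\Omega\times[0,\eps_0)$ by pullback along the projection. By uniform continuity of $a$, choose $\delta>0$ small enough that $|a(x',x_n)-c(x')|<\eps/4$ for $x_n<\delta$, whence $|a_1-c|<\eps/2$ there. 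Pick a smooth cutoff $\chi(x_n)$ with $\chi\equiv 1$ on $[0,\delta/2)$ and $\chi\equiv 0$ on $(\delta,\infty)$, and set
\begin{equation*}
 a_2:=\chi\,c+(1-\chi)\,a_1 \text{ on } U,\qquad a_2:=a_1 \text{ on } \Omega\setminus U.
\end{equation*}
Then $a_2\in\scrC^\infty(\Omega)$, it equals the locally constant function $c$ in a neighborhood of $\partial\Omega$, and $\|a-a_2\|_\infty<\eps$. Applying Lemma \ref{thm:1carA} with $\varphi=a_2$ (so $a_2-\varphi\equiv 0$ trivially vanishes to infinite order on $\partial\Omega$) yields $a_2\in\calA^\infty_\calD$, proving density.

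The main obstacle is the simultaneous requirement in the third paragraph to produce an approximation that is \emph{smooth}, uniformly close to $a$, and \emph{exactly} constant on each component of the boundary in a whole neighborhood—so that Lemma \ref{thm:1carA} applies. The construction above handles this by reconciling the mollification scale with the thickness of the collar cutoff via uniform continuity of $a$; apart from this, the argument is essentially a bookkeeping of the two inclusions and of the identification of operator-norm closure with sup-norm closure for multiplication operators.
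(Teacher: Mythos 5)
Your proof is correct and follows essentially the same route as the paper: reduce to sup-norm closure via the isometry $a\mapsto M_a$, sandwich $\calA^\infty_\calD$ between $\scrC_\partial(\Omega)$ (using Proposition \ref{prop:const}) and a dense subspace of functions that are constant near the boundary (using Lemma \ref{thm:1carA}), and observe that $\scrC_\partial(\Omega)$ is closed. The paper takes $V$ to be all smooth functions with $a-a|_{\partial\Omega}$ vanishing to infinite order at $\partial\Omega$ and calls the density step elementary; your construction of $a_2$ via the cutoff $\chi$ is just a spelled-out version of that elementary fact, restricted to the smaller (still dense) class of functions that are exactly locally constant near the boundary.
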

\begin{proof}
Let\footnotetext{Here, we identify functions from $\scrC(\Omega)$ with their operators of multiplication; 
the operator norm as an element in $\scrL(L^2(\Omega,E))$ then coincides with the supremum norm of the function. 
Hence taking the closure refers to uniform convergence on $\Omega$.} 
us introduce the space $V$ consisting of those 
functions $a\in \scrC^\infty(\Omega)\cap\scrC_\partial(\Omega)$ such that $a-a|_{\partial\Omega}$ vanishes 
to infinite order at the boundary. Then, by Lemmas \ref{thm:1carA} and \ref{prop:const}, 
$V\subset \calA^\infty_\calD\subset \scrC_\partial(\Omega)$. 
However, it is an elementary fact that the closure of $V$ with repect to the supremum norm coincides with 
$\scrC_\partial(\Omega)$. 
\end{proof}

Denoting by $\wh\Omega$ the topological space obtained from $\Omega$ by collapsing 
each connected component of $\partial\Omega$ to a separate, single point, $\scrC_\partial(\Omega)$ 
is isomorphic to $\scrC(\wh{\Omega})$. In this sense, the spectral triple constructed above does not see the 
boundary of the manifold.

\subsection{Example: A spectral triple on the disk}
\label{sec:exe}

It is natural to ask which of the hypotheses of Connes' reconstruction theorem are not met 
when considering the regular spectral triple of a manifold with boundary 
$(\calA_{\calD}^\infty, \calH, \calD)$ introduced above. The answer is that the so called 
finiteness axiom is violated. Indeed, it is not true that $\calH^{\infty}_{\calD}$  is a finitely generated 
projective $\calA_{\calD}^{\infty}$-module. Indeed, if this were true, then
 \[
   \calH^{\infty}_{\calD} \cong p(\calA_{\calD}^\infty\oplus\ldots\oplus \calA_{\calD}^\infty) 
   \qquad\text{($N$ summands)}
 \]
for a suitable $N$ and a suitable projection $p$ given by an $N\times N$-matrix with entries 
from $\calA_{\calD}^\infty$. Therefore, in view of Theorem $\mathrm{\ref{thm:closure}}$, 
restricting $\calH^{\infty}_{\calD}$ to the boundary would give a finite-dimensional space. 
However, as we shall verify in an explicit example, this is not true in general. 
 
Following the exposition in \cite{FGS98}, we consider on
$\mathbb{B}:=\{(x,y)\in\rz^2\mid x^2+y^2\le 1\}$ the Dirac operator 
 $$D= i \begin{pmatrix}0 &1\\1 & 0\end{pmatrix} \partial_x
          + i\begin{pmatrix} 0 &-i\\ i & 0\end{pmatrix} \partial_y,$$
acting on $\cz^2$-valued functions. Passing to polar coordinates 
$(r, \theta)$,\footnote{In the literature sometimes a different sign convention is used, cf.\ \cite{CC13} 
for example. This is due to the change of coordinates $\theta\to\frac{\pi}{2}-\tilde{\theta}$.}
 $$D= i \begin{pmatrix}0 &e^{- i \theta}\\ e^{i \theta} & 0\end{pmatrix} \partial_r
    + \frac{i}{r}\begin{pmatrix} 0 &-i e^{- i \theta}\\ i e^{i \theta} & 0 \end{pmatrix} \partial_\theta,$$
respectively 
 $$ D= 
   \begin{pmatrix}  
    0 & i e^{-i \theta} \pt{\partial_r+ B(r)}\\
    -i e^{i \theta} \pt{-\partial_r+ B(r)}& 0 
   \end{pmatrix}, 
   \qquad B(r)= -\frac{i}{r}\partial_\theta.$$
The operator $B:=B(1)$, acting as unbounded operator in $L^2(\partial\bz)$,  has spectrum consisting of 
the eigenvalues $n\in\gz$, with corresponding eigenfunctions $e^{in\theta}$. If $P_\ge$ and $P_\le$ 
denote the orthonormal projections in $L^2(\partial\bz)$ onto the span of the $e^{in\theta}$ with $n\ge0$ 
and $n\le0$, respectively, then $\calD:=D_{\mathrm{APS}}$ has domain 
 $$\dom\calD=\big\{\psi=(\psi_1,\psi_2)\in H^1(\bz,\cz^2)\mid 
     P_\ge\gamma_0\psi_1=P_\le\gamma_0\psi_2=0\big\},$$
where $\gamma_0u=u|_{\partial\bz}$ is the restriction to the boundary of $\bz$. 

\begin{proposition} 
The restriction of $\calH^\infty_\calD$ to the boundary of $\mathbb{B}$ is an infinite-dimensional space. 
\end{proposition}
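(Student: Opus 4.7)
The plan is to exhibit infinitely many eigenfunctions of $\calD$ whose boundary restrictions are linearly independent. First I would observe that since $\calD$ is self-adjoint with compact resolvent, any eigenfunction $\phi\in\mathrm{ker}(\calD-\lambda)$ satisfies $\calD^k\phi=\lambda^k\phi\in\mathrm{dom}(\calD)$ for every $k$, and hence $\phi\in\calH^\infty_\calD$.

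Next, I would use the rotational symmetry of the disk: the Hilbert space $L^2(\bz,\cz^2)$ splits as the orthogonal direct sum $\bigoplus_{m\in\gz}V_m$, where
\[
 V_m:=\bigl\{(f(r)\,e^{im\theta},\,g(r)\,e^{i(m+1)\theta}):f,g\in L^2([0,1],r\,dr)\bigr\}.
\]
A short calculation using the explicit form of $\calD$ in polar coordinates shows that, for smooth $\psi=(f\,e^{im\theta},\,g\,e^{i(m+1)\theta})$,
\[
 \calD\psi=\bigl(i\bigl(g'+\tfrac{m+1}{r}g\bigr)e^{im\theta},\,i\bigl(f'-\tfrac{m}{r}f\bigr)e^{i(m+1)\theta}\bigr)\in V_m,
\]
so each $V_m$ is a reducing subspace for $\calD$, and the APS boundary condition restricted to $V_m$ collapses to $f(1)=0$ if $m\ge 0$, respectively $g(1)=0$ if $m\le -1$.

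Then, since $\calD|_{V_m}$ is self-adjoint on the infinite-dimensional space $V_m$ with compact resolvent, it admits at least one eigenfunction $\phi_m=(f_m(r)\,e^{im\theta},\,g_{m+1}(r)\,e^{i(m+1)\theta})\ne 0$ with some eigenvalue $\lambda_m$. The pair $(f_m,g_{m+1})$ satisfies on $(0,1]$ the first-order linear system
\[
 f_m'=\tfrac{m}{r}f_m-i\lambda_m g_{m+1},\qquad g_{m+1}'=-\tfrac{m+1}{r}g_{m+1}-i\lambda_m f_m,
\]
whose only singular point is $r=0$. Uniqueness of solutions for this system from the data at $r=1$ then shows that $f_m(1)=g_{m+1}(1)=0$ would force $(f_m,g_{m+1})\equiv 0$ on $(0,1]$, contradicting $\phi_m\ne 0$. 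Hence $\gamma_0\phi_m\ne 0$, and in view of the APS condition its Fourier support on $\partial\bz$ is the single mode $m+1$ if $m\ge 0$, or $m$ if $m\le -1$.

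Finally, the family $\{\gamma_0\phi_m:m\in\gz\}$ consists of nonzero vectors with pairwise distinct Fourier supports on $\partial\bz$, hence is linearly independent, and so exhibits an infinite-dimensional subspace of the restriction of $\calH^\infty_\calD$ to $\partial\bz$. The main technical point I expect to require care is justifying that each $V_m$ is a reducing subspace of the \emph{unbounded} operator $\calD$, so that $\calD|_{V_m}$ inherits self-adjointness and compactness of the resolvent; this should follow from the commutation of $\calD$ with the natural total angular momentum operator whose eigenspaces are precisely the $V_m$, but deserves a separate verification.
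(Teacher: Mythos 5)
Your argument is correct, but it takes a genuinely different route from the paper's. The paper proves the statement by a direct, hands-on construction: for each integer $k>0$ it exhibits the explicit function $\psi_k=(\chi(r)\,r^{-k}e^{-ik\theta},\,0)$, where $\chi$ is a cutoff equal to $1$ near $r=1$ and to $0$ near $r=0$ (note $\cosh(-k\log r)+\sinh(-k\log r)=r^{-k}$); since $(-\partial_r+B(r))(r^{-k}e^{-ik\theta})=0$, the function $D\psi_k$ is supported where $\chi'\ne0$, i.e.\ compactly in the interior, so all iterates $D^n\psi_k$ are too, and the iterated APS conditions hold trivially while $\psi_k|_{\partial\bz}=(e^{-ik\theta},0)$ gives an explicit infinite linearly independent family of traces. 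Your approach instead decomposes $L^2(\bz,\cz^2)$ into the angular modes $V_m$, observes that these reduce $\calD$ (true, because the spectral projections $P_\ge,P_\le$ of $B$ are Fourier multipliers in $\theta$ and $\Pi_m$ is as well, so $\Pi_m$ preserves both the $H^1$ regularity and the APS condition), extracts an eigenfunction $\phi_m$ of $\calD|_{V_m}$ from self-adjointness and compact resolvent, and then uses backward uniqueness for the radial first-order ODE system on $(0,1]$ — regular away from $r=0$, with $\phi_m$ smooth up to the boundary by elliptic regularity — to conclude that the boundary trace cannot vanish; since these traces have pairwise disjoint Fourier supports in disjoint $\cz^2$-components, they are linearly independent. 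What you pay for this more conceptual route is precisely the reduction claim you flag (which does require checking that $\Pi_m$ preserves $\dom\calD$, not just $L^2$) and the appeal to elliptic boundary regularity so that the ODE holds classically on $(0,1]$; what you gain is an explanation of \emph{why} the trace space must be infinite-dimensional — it is forced by the spectral theory of $\calD$ and uniqueness for the radial ODE — rather than a clever ad hoc construction. The paper's proof is shorter and entirely self-contained; yours shows the phenomenon is structural.
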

\begin{proof}
With an integer $k>0$ let us consider $\psi_k=(\psi_{k,1},0)\in\scrC^\infty(\bz,\cz^2)$ with 
\begin{equation}\label{eq:contrk}
    \psi_{k,1}(r,\theta)= \chi(r)\pt{\cosh(-k \log r ) + \sinh (-k \log r)} e^{-ik \theta}, 
\end{equation}
where $\chi$ is a smooth function identically equal to $1$ near $r=1$
and identically equal to zero near $r=0$. Obviously, $\psi_k|_{\partial\mathbb{B}}=(e^{-ik\theta},0)$ and 
$\psi_k$ belongs to the domain of $\calD$. 
Moreover, $D\psi_k=(0,\varphi_{k,2})$ with 
$$\varphi_{k,2}(r,\theta)= -i e^{i \theta} \Big( - \partial_r-\frac{i}{r} \partial_\theta \Big) 
    \left[   \chi(r)   \pt{\cosh(-k \log r ) + \sinh (-k \log r)} e^{-ik \theta}\right].$$
A straight-forward calculation now reveals that 
$$\varphi_{k,2}(r,\theta)=
    i e^{i \theta}\pt{\partial_r \chi}(r)\Big(\big(\cosh(-k \log r ) + \sinh (-k \log r)\big) e^{-ik \theta}\Big).$$
Therefore $D\psi_k$ is supported in the interior of $\bz$, since $\partial_r \chi$ vanishes near $r=1$. 
Since $D$ is a differential operator, this is then also true for $D^n \psi_k$ for every $n\ge 1$, hence 
the $\aps$-boundary conditions are trivially fulfilled.
\end{proof}

\section{Appendix: Ellipticity and Fredholm property}\label{sec:appendix}

Let $\Sigma\subset\rz^n$ and assume we have two families of Banach spaces 
 $$\{H^s_j\}_{s\in\Sigma},\qquad j=0,1,$$
having the following properties, for every $s$: 
\begin{enumerate}
 \item $H^\infty_j:=\mathop{\cap}_{s\in\Sigma}H^s_j$ is a dense subspace of $H^s_j$.  
 \item Any continuous operator $T:H^s_j\to H^s_j$ with $\mathrm{im}\,T\subset H^\infty_j$ is compact. 
\end{enumerate}

\subsection{Invariance of the index}
Let us consider two operators 
 $$A_j:H^\infty_j\lra H^\infty_{1-j},\qquad j=0,1,$$
that extend by continuity to operators 
 $$A_j^s:H^s_j\lra H^s_{1-j},\qquad s\in\Sigma,$$
and such that 
 $$C_j:=1-A_{1-j}A_j:H^\infty_j\lra H^\infty_j$$
are regularizing operators, in the sense that the extensions $C_j^s$ satisfy 
 $$\mathrm{im}\,C_{j}^s\subset H^\infty_j,\qquad s\in\Sigma.$$ 
Due to assumption $(2)$ on the compactness, each $A_j^s$ is a Fredholm operator. 
We shall see, in particular, that the corresponding index does not depend on $s\in\Sigma$. 
This has already been observed in Lemma 1.2.94 of \cite{KaSc} in case of one-parameter scales of 
Hilbert spaces $($requiring, in particular, continuous  embeddings $H^s\hookrightarrow H^t$ for $s\ge t$, 
which are compact in case $s>t)$; the proof we give here extends to multi-parameter families of Banach 
spaces.  

\begin{example}
In connection with boundary value problems, typical families arising are of the form 
$\Sigma=\big\{s=(r,p)\mid p>1,\;r>1/p\big\}\subset\rz^2$ and 
 $$H^s_j=H^r_p(\Omega,E_j)\oplus B^{r-1/p}_{pp}(\pO,F_j),\qquad s=(r,p)\in\Sigma,$$
with a smooth compact manifold with boundary $\Omega$ and vector-bundles $E_j$ and $F_j$ 
$($direct sum of Sobolev $($Bessel potential$)$ and Besov spaces$)$. Then 
 $$H^\infty_j=\scrC^\infty(\Omega,E_j)\oplus \scrC^\infty(\pO,F_j)$$
and $(1)$, $(2)$ hold due to well-known embedding theorems. 
\end{example}

Let us first observe the following consequence, refered to as \textit{elliptic regularity} in the sequel: 
Let $f\in H^\infty_1$ and $A^s_0u=f$ with $u\in H^s_0$ for some $s\in\Sigma$. 
Then $u\in H^\infty_0$ and $f=A_0u$. In fact,  
 $$A_1f=A^s_1f=A_1^sA_0^su=(1-C^s_0)u=u-C^s_0u$$
shows that $u=A_1f+C^s_0u$ belongs to $H^\infty_0$. 

\begin{lemma}\label{lem:complement}
Let $V_1$ be a finite-dimensional subspace of $H^\infty_1$ such that 
\begin{itemize}
 \item[$(1)$] $V_1\cap\mathrm{im}\,A_0^s=\{0\}$,\qquad\quad 
  $(2)$\; $V_1+\mathrm{im}\,A_0^s=H^s_1$
\end{itemize}
for some fixed value $s=s_0$. Then both $(1)$ and $(2)$ hold for arbitrary $s\in\Sigma$. 
\end{lemma}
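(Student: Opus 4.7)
The plan is to use the elliptic regularity observation recorded just before the statement—namely that any solution of $A_0^s u = f$ with $f\in H^\infty_1$ lies automatically in $H^\infty_0$—as the sole bridge between the two parameter values $s$ and $s_0$. With this tool, the two complementarity conditions can be handled separately: part $(1)$ reduces directly to the hypothesis at $s_0$, while part $(2)$ requires first replacing an arbitrary element of $H^s_1$ by a regularized residue, which is accomplished through the parametrix identity $A_0^s A_1^s = 1 - C_1^s$.

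For part $(1)$, I would take $v\in V_1\cap\mathrm{im}\,A_0^s$ and write $v=A_0^s u$ with $u\in H^s_0$. Since $v\in V_1\subset H^\infty_1$, elliptic regularity promotes $u$ to $H^\infty_0\subset H^{s_0}_0$, whence $v\in V_1\cap\mathrm{im}\,A_0^{s_0}=\{0\}$ by hypothesis.

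For part $(2)$, given $f\in H^s_1$, I would set $u_0:=A_1^s f\in H^s_0$ and use $A_0^s A_1^s = 1-C_1^s$ to write $f=A_0^s u_0+g$ with $g:=C_1^s f\in H^\infty_1$. Applying the hypothesis at $s_0$ to $g\in H^\infty_1\subset H^{s_0}_1$ yields a decomposition $g=v+A_0^{s_0}w$ with $v\in V_1$ and $w\in H^{s_0}_0$. Since $A_0^{s_0}w=g-v\in H^\infty_1$, elliptic regularity upgrades $w$ to $H^\infty_0\subset H^s_0$, and one concludes $f=A_0^s(u_0+w)+v$, the required decomposition at parameter $s$.

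The principal obstacle I anticipate is precisely the regularity gap in part $(2)$: the hypothesis at $s_0$ delivers $w\in H^{s_0}_0$, and since $\Sigma$ is only assumed to be a subset of $\rz^n$ without monotonicity of the family $\{H^s_j\}$, the space $H^{s_0}_0$ need not embed into $H^s_0$. The crucial point that rescues the argument is that the hypothesis is invoked only against an element $g\in H^\infty_1$, so the preimage $w$ automatically lies in $H^\infty_0$ by elliptic regularity, bypassing any direct comparison between $H^{s_0}_0$ and $H^s_0$.
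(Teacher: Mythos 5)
Your argument is correct, and for part $(2)$ it follows a genuinely different and arguably cleaner route than the paper's. The paper approximates $f\in H^s_1$ by a sequence $(f_n)\subset H^\infty_1$, decomposes each $f_n=v_n+A_0 u_n$ using the hypothesis at $s_0$ together with elliptic regularity, and then passes to the limit by invoking the Fredholm property of $A_0^s$: since $\mathrm{im}\,A_0^s$ is closed and $V_1$ is a topological complement modulo a further finite-dimensional piece, convergence of $(f_n)$ forces the two summands to converge separately, yielding $f=v+A_0^s u$. Your proof instead applies the parametrix identity $A_0^s A_1^s=1-C_1^s$ to peel off a regularizing residue $g=C_1^s f\in H^\infty_1$ immediately, decomposes $g$ using the hypothesis at $s_0$, and upgrades the resulting preimage $w$ to $H^\infty_0$ by elliptic regularity so that it lands in $H^s_0$ for free. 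This avoids the limiting argument and the closed-range/topological-complement considerations entirely; it trades the abstract Fredholm machinery for explicit use of the near-inverse $A_1$, which is available by hypothesis. Both proofs are correct; yours is shorter and sidesteps a potential pitfall (the absence of any monotone embedding among the $H^s_j$) which you explicitly and correctly identify as neutralized by the regularizing property of $C_1$.
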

\begin{proof}
$(1)$ is a direct consequence of elliptic regularity: If $f=A^s_0u\in V_1$ then $u\in H^\infty_0$ and, in particular, 
$f=A^{s_0}_0u\in V_1\cap\mathrm{im}\,A_0^{s_0}=\{0\}$. 

For $(2)$ let $f\in H^s_1$ with some $s\in\Sigma$ be given. Choose a sequence $(f_n)\subset H^\infty_1$ 
converging to $f$ in $H^s_1$. By $(2)$ for $s=s_0$ and elliptic regularity, we can write 
 $$f_n=v_n+A_0u_n,\qquad v_n\in V_1,\quad u_n\in H^\infty_0.$$
Since $A^s_0$ is a Fredholm operator, $\mathrm{im}\,A^s_0$ is a closed subspace of $H^s_1$. By $(1)$ 
it has a complement of the form $V_1\oplus V$ with some finite-dimensional $V$; this is then a topological 
complement. It follows that $(f_n)$ converges in $H^s_1$ if, and only if, both $(A^s_0u_n)$ and $(v_n)$ 
converge in $H^s_1$. Thus there exists a $u\in H^s_0$ and a $v\in V_1$ such that, in $H^s_1$,  
 $$f_n\xrightarrow{n\to+\infty}f,\qquad f_n=v_n+A^s_0 u_n\xrightarrow{n\to+\infty} v+A^s_0u.$$
Therefore $f=v+A^s_0u\in V_1+\mathrm{im}\,A_0^s$.  
\end{proof}

\begin{proposition}\label{prop:index}
Under the above assumptions there exist finite-dimensional subspaces $V_j\subset H^\infty_j$ 
such that, for every $s\in\Sigma$, 
 $$\mathrm{ker}\,A^s_0=V_0,\qquad H^s_1=V_1\oplus\mathrm{im}\,A^s.$$ 
In particular, $\mathrm{ind}\,A^s_0=\mathrm{dim}\,V_0-\mathrm{dim}\,V_1$ does not depend on $s$. 
Moreover, if $A_0^s$ is invertible for some $s\in\Sigma$ then it is for all $s$. 
\end{proposition}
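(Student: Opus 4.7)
The plan is to first show that each $A_0^s$ is Fredholm and that its kernel does not grow with $s$, then to construct a common complement $V_1$ to the image by exploiting the density assumption (1), and finally to transport everything across $\Sigma$ via Lemma \ref{lem:complement}.

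First I would check Fredholmness. Since $A_1^s A_0^s = 1 - C_0^s$ and $C_0^s$ has image in $H^\infty_0$, assumption (2) makes $C_0^s$ compact in $H^s_0$; similarly for $A_0^s A_1^s$. Thus $A_1^s$ is a two-sided parametrix of $A_0^s$ modulo compact operators, so $A_0^s$ is Fredholm for every $s\in\Sigma$. Next, I would establish the kernel claim by a direct application of the elliptic regularity already noted in the excerpt: if $u\in H^s_0$ satisfies $A_0^s u=0$, then $A_1^sA_0^s u=0=u-C_0^su$ forces $u=C_0^su\in H^\infty_0$. Hence $\ker A_0^s$ is contained in $H^\infty_0$, and since $A_0$ and $A_0^s$ agree on $H^\infty_0$, the kernel coincides with the purely algebraic kernel $V_0:=\ker(A_0\colon H^\infty_0\to H^\infty_1)$, which is independent of $s$. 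Fredholmness ensures $V_0$ is finite-dimensional.

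For the image, I would fix some $s_0\in\Sigma$ and pick a (finite-dimensional) algebraic complement of the closed subspace $\mathrm{im}\,A_0^{s_0}\subset H^{s_0}_1$. The subtle point, and the main obstacle, is that this complement must be chosen inside $H^\infty_1$ so that it is a subspace of $H^s_1$ for every $s$. I would resolve this using density: let $\pi\colon H^{s_0}_1\to H^{s_0}_1/\mathrm{im}\,A_0^{s_0}$ be the quotient map and fix any basis $[w_1],\dots,[w_d]$ of the quotient. By property (1), I approximate each $w_i$ by an element $v_i\in H^\infty_1$; continuity of $\pi$ and the openness of the set of bases of a finite-dimensional space (nonvanishing of a determinant) imply that for sufficiently close approximations $[v_1],\dots,[v_d]$ remains a basis of the quotient. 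Setting $V_1:=\mathrm{span}(v_1,\dots,v_d)\subset H^\infty_1$, we obtain $V_1\cap\mathrm{im}\,A_0^{s_0}=\{0\}$ and $V_1+\mathrm{im}\,A_0^{s_0}=H^{s_0}_1$.

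At this stage Lemma \ref{lem:complement} applies verbatim and yields $V_1\cap\mathrm{im}\,A_0^s=\{0\}$ together with $V_1+\mathrm{im}\,A_0^s=H^s_1$ for every $s\in\Sigma$, so that $H^s_1=V_1\oplus\mathrm{im}\,A_0^s$ uniformly in $s$. Combined with $\ker A_0^s=V_0$, this gives
\begin{equation*}
\mathrm{ind}\,A_0^s=\dim V_0-\dim V_1 \qquad\text{for all } s\in\Sigma,
\end{equation*}
proving both the constancy of the index and the last assertion: if $A_0^{s_0}$ is invertible then $V_0=\{0\}$ and $V_1=\{0\}$, and since these spaces do not depend on $s$, the operator $A_0^s$ is bijective and hence (by the open mapping theorem, being a bounded Fredholm operator between Banach spaces) invertible for every $s\in\Sigma$.
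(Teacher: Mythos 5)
Your proof is correct. The kernel step (elliptic regularity forces $\ker A_0^s\subset H^\infty_0$, hence independence of $s$) and the reduction via Lemma~\ref{lem:complement} match the paper. Where you diverge is in producing the finite-dimensional complement $V_1\subset H^\infty_1$: you take a complement $W=\mathrm{span}\{w_1,\dots,w_d\}$ of $\mathrm{im}\,A_0^{s_0}$ in $H^{s_0}_1$, approximate each $w_i$ by $v_i\in H^\infty_1$ using the density hypothesis, and invoke openness of the basis condition in the finite-dimensional quotient $H^{s_0}_1/\mathrm{im}\,A_0^{s_0}$ (closed image by Fredholmness, $\pi$ continuous, nonvanishing determinant). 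The paper instead uses the parametrix identity $1=C_1^{s_0}+A_0^{s_0}A_1^{s_0}$ to write $w_k=v_k+A_0^{s_0}u_k$ with $v_k:=C_1^{s_0}w_k$, which lies in $H^\infty_1$ \emph{exactly} because $C_1$ is regularizing; so each $w_k$ is corrected to a smooth representative modulo $\mathrm{im}\,A_0^{s_0}$ by an explicit algebraic manipulation, with no approximation needed. Your argument is the standard soft functional-analytic move and relies only on density plus Fredholmness; the paper's is constructive and cleaner within the calculus, since it exploits the very structure ($C_j$ regularizing) that the hypotheses provide, avoiding any epsilon--delta bookkeeping. Both are valid; the paper's version is also the one that immediately generalizes to give a smooth complement \emph{simultaneously} compatible with the operator algebra, which matters in the subsequent proposition about parametrices modulo projections.
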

\begin{proof}
By elliptic regularity, $V_0:=\mathrm{ker}\,A^s_0$ is independent of $s$. Due to the Fredholm property it is 
finite-dimensional. 

By the previous Lemma \ref{lem:complement} it suffices to find a subspace $V_1\subset H^\infty_1$ that is a 
complement to $\mathrm{im}\,A^s_0$ for some fixed choice of $s$. 
Let $W=\mathrm{span}\{w_1,\ldots,w_n\}$ be a complement to $\mathrm{im}\,A^s_0$ in $H^s_1$. Write 
 $$w_k=(C^s_1+A^s_0A^s_1)w_k=v_k+ A^s_0u_k,\qquad k=1,\ldots,n,$$ 
with $v_k:=C^s_1w_k\in H^\infty_1$ and $u_k:=A^s_1w_k\in H^s_0$. Then  
$V_1=\mathrm{span}\{v_1,\ldots,v_n\}$ is the desired complement. 
\end{proof}

\subsection{Inverses modulo projections}

Now let $L_{jk}^0$, $j,k\in\{0,1\}$, denote certain vector spaces of operators $H^\infty_j\to H^\infty_{k}$, 
whose elements extend by continuity to operators in $\scrL(H^s_j,H^s_k)$ for every $s\in\Sigma$. 
Let $L_{jk}^{-\infty}$ be subspaces of regularizing $($in the sense described above$)$ operators. 
Assume that, for every choice of $j,k,\ell\in\{0,1\}$, compostion of operators induces maps 
 $$(A,B)\mapsto AB,\quad L_{k\ell}^s\times L^{t}_{jk}\lra L^{s+t}_{j\ell},
     \qquad s,t\in\{-\infty,0\}.$$
By definition, a parametrix to $A_0\in L^0_{01}$ is any operator $A_1\in L^0_{10}$ such that 
 $$1-A_{10}A_{01}\in L^{-\infty}_{00}\quad\text{and}\quad 1-A_{01}A_{10}\in L^{-\infty}_{11}.$$
An operator possessing a parametrix is called elliptic. 

\begin{proposition}
Let $A_0\in L^0_{01}$ be elliptic and $V_j\subset H^\infty_j$ be two spaces as described in 
Proposition $\mathrm{\ref{prop:index}}$. Assume that $\pi_j\in L^{-\infty}_{jj}$ are projections with 
$\mathrm{im}\,\pi_j=V_j$. Furthermore assume that, for every $s\in\Sigma$, compostion of operators 
induces maps 
\begin{equation}\label{eq:sandwich}
 (A,B,C)\mapsto ABC,\quad L_{00}^{-\infty}\times \scrL(H^s_{1},H^s_0)\times L_{11}^{-\infty}\lra L_{10}^{-\infty},
\end{equation}
i.e., sandwiching a continuous operator between two smoothing operators results in a smoothing operator. 
Then there exists a parametrix $A_1\in L^0_{10}$ such that 
 $$A_1A_0=1-\pi_0,\qquad A_0A_1=1-\pi_1.$$
\end{proposition}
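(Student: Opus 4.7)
The plan is to construct $A_1$ as an $L^{-\infty}_{10}$-perturbation of an arbitrary parametrix of $A_0$, reading off its membership in $L^0_{10}$ from an algebraic identity that exposes the three-fold sandwich structure of assumption \eqref{eq:sandwich}.

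First, by ellipticity I would pick any parametrix $B\in L^0_{10}$ and set $R_0:=1-BA_0\in L^{-\infty}_{00}$ and $R_1:=1-A_0B\in L^{-\infty}_{11}$. Next I would define $A_1$ abstractly on each $H^s_1$ as follows. By Proposition \ref{prop:index}, $\mathrm{ker}\,A_0^s=V_0$ and $H^s_1=V_1\oplus\mathrm{im}\,A_0^s$ for every $s\in\Sigma$. Without loss of generality I may take $\pi_1$ to be the projection onto $V_1$ with $\mathrm{ker}\,\pi_1=\mathrm{im}\,A_0$ (this unique choice is finite-rank with range in $H^\infty_1$, and one checks it belongs to $L^{-\infty}_{11}$). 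Since $\mathrm{ker}\,\pi_0$ is a complement to $V_0=\mathrm{ker}\,A_0$, the restriction $A_0^s\colon \mathrm{ker}\,\pi_0^s\to\mathrm{ker}\,\pi_1^s$ is a Banach-space isomorphism, and the open mapping theorem yields a bounded inverse $I^s$. Setting $A_1^s:=I^s(1-\pi_1^s)\colon H^s_1\to H^s_0$, a direct check gives $A_0A_1=1-\pi_1$ and $A_1A_0=1-\pi_0$; these identities force $A_1^s$ and $A_1^t$ to agree on $H^s_1\cap H^t_1$, so they combine to a single operator $A_1\in\scrL(H^s_1,H^s_0)$ for every $s\in\Sigma$.

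The central step is to upgrade $A_1$ to an element of $L^0_{10}$ by showing $A_1-B\in L^{-\infty}_{10}$. The defining relations give
$$A_0(A_1-B)=R_1-\pi_1\in L^{-\infty}_{11},\qquad (A_1-B)A_0=R_0-\pi_0\in L^{-\infty}_{00}.$$
Left-multiplying the first by $B$ and using $BA_0=1-R_0$ yields $A_1-B=B(R_1-\pi_1)+R_0(A_1-B)$, while right-multiplying the second by $B$ and using $A_0B=1-R_1$ yields $A_1-B=(R_0-\pi_0)B+(A_1-B)R_1$. Substituting the first expression into the $(A_1-B)$ appearing on the right of the second produces
$$A_1-B=(R_0-\pi_0)B+B(R_1-\pi_1)R_1+R_0(A_1-B)R_1.$$
The first two summands lie in $L^{-\infty}_{10}$ by the composition rules of the algebra, each being a product that contains at least one factor from $L^{-\infty}$. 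The third summand lies in $L^{-\infty}_{10}$ by the sandwich hypothesis \eqref{eq:sandwich}, since $R_0\in L^{-\infty}_{00}$, $A_1-B\in\scrL(H^s_1,H^s_0)$ and $R_1\in L^{-\infty}_{11}$. Hence $A_1-B\in L^{-\infty}_{10}$ and therefore $A_1=B+(A_1-B)\in L^0_{10}$.

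The principal obstacle I anticipate is the calibration of $\pi_1$: the identity $A_0A_1=1-\pi_1$ forces $\mathrm{ker}\,\pi_1=\mathrm{im}\,A_0$, so one must either read the hypothesis as implicitly fixing this complement or begin the proof by replacing a given $\pi_1$ with the projection onto $V_1$ along $\mathrm{im}\,A_0$ and verifying that it still lies in $L^{-\infty}_{11}$. Once this is in place the construction is essentially a finite-rank adjustment of $B$, made rigorous by the three-fold sandwich \eqref{eq:sandwich}.
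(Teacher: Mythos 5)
Your proof is essentially the same as the paper's. Both pick an arbitrary parametrix ($B$ in your notation, $\wt{A}_1$ in the paper's), define the candidate $A_1^s$ abstractly at a fixed $s$ as the generalized inverse annihilating $V_1$, and then expose the membership $A_1\in L^0_{10}$ through an algebraic identity whose only non-trivial summand is of the sandwich form $R_0(A_1-B)R_1$. The only difference is the direction of substitution: the paper substitutes its ``second'' equation into its ``first,'' arriving at $A_1-\wt{A}_1=\wt{A}_1(C_1-\pi_1)+C_0(C_0-\pi_0)\wt{A}_1+C_0(A_1-\wt{A}_1)C_1$, while you substitute your first into your second, arriving at $A_1-B=(R_0-\pi_0)B+B(R_1-\pi_1)R_1+R_0(A_1-B)R_1$; both identities are correct and do exactly the same work.

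One comment on the point you flag about $\pi_1$: you are right that the conclusion $A_0A_1=1-\pi_1$ together with $A_1A_0=1-\pi_0$ and $\mathrm{im}\,\pi_0=\mathrm{ker}\,A_0$ forces $\pi_1A_0=0$, hence $\mathrm{ker}\,\pi_1=\mathrm{im}\,A_0$. So this is not a ``without loss of generality'' choice; it is a necessary compatibility condition implicit in the hypothesis, and the paper's proof uses it silently in the step $A_0^sA_1^s=1-\pi_1^s$. Also, your parenthetical ``one checks it belongs to $L^{-\infty}_{11}$'' is not something one can actually check from the stated axioms alone; it is better to read the hypothesis as already supplying this particular $\pi_1$. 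Finally, the gluing remark (``$A_1^s$ and $A_1^t$ agree on $H^s_1\cap H^t_1$'') is not needed: working at a single fixed $s$, the formula you derive defines an operator in $L^0_{10}$ whose required identities hold on $H^s$, hence on the dense subspace $H^\infty$, hence on every $H^t$ by continuity. None of these points affect correctness; they are only matters of presentation.
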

\begin{proof}
Fix some $s\in\Sigma$. Then $\pi_j^s$ are projections in $H^s_j$ 
with image $V_j$. Note that $A_0^s:\mathrm{ker}\,\pi_0^s\to \mathrm{im}\,A_0^s$ is bijective, hence has an 
inverse. Let $A_1^s\in\scrL(H_1^s,H_0^s)$ be the operator acting like this inverse on $\mathrm{im}\,A_0^s$ 
and vanishing on $V_1$. By construction we thus have 
 $$A_1^sA_0^s=1-\pi_0^s,\qquad A_0^sA_1^s=1-\pi_1^s.$$
Now let $\wt{A}_1$ be a parametrix to $A_0$ and $C_0=1-\wt{A}_1A_0$ and $C_1=1-A_0\wt{A}_1$. Then 
\begin{align*}
 A_1^s-\wt{A}_1^s
 &=\big(\wt{A}_1^sA_0^s+C_0^s\big)\big(A_1^s-\wt{A}_1^s\big)
   =\wt{A}_1^s\big(C_1^s-\pi_1^s\big)+C_0^s\big(A_1^s-\wt{A}_1^s\big)\\
 A_1^s-\wt{A}_1^s
 &=\big(A_1^s-\wt{A}_1^s\big)\big(A_0^s\wt{A}_1^s+C_1^s\big)
   =\big(C_0^s-\pi_0^s\big)\wt{A}_1^s+\big(A_1^s-\wt{A}_1^s\big)C_1^s.
\end{align*}
Substituting the second equation into the first yields 
 $$A_1^s-\wt{A}_1^s=\wt{A}_1^s\big(C_1^s-\pi_1^s\big)+
       C_0^s\big(C_0^s-\pi_0^s\big)\wt{A}_1^s+C_0^s\big(A_1^s-\wt{A}_1^s\big)C_1^s.$$
Hence the desired parametrix is 
 $$A_1=\wt{A}_1+\wt{A}_1\big(C_1-\pi_1\big)+
       C_0\big(C_0-\pi_0\big)\wt{A}_1+C_0\big(A_1^s-\wt{A}_1^s\big)C_1,$$
since the last term belongs to $L^{-\infty}_{10}$ due to assumption \eqref{eq:sandwich}. 
\end{proof}

\textbf{Acknowledgements: }
The authors thank J.\ Aastrup, M.\ Goffeng, C.\ Levy and E.\ Schrohe for valuable discussions on spectral triples.
The first author has been partially supported by the Gruppo Nazionale per l'Analisi
Matematica, la Probabilit\`a e le loro Applicazioni (GNAMPA) of the Istituto
Nazionale di Alta Matematica (INdAM).



\begin{thebibliography}{99}

\bibitem{APS}
M.~F. Atiyah, V.~K. Patodi, and I.~M. Singer.
\newblock Spectral asymmetry and {R}iemannian geometry. {I}.
\newblock{\em Math. Proc. Cambridge Philos. Soc.} \textbf{77} (1975), 43--69.

\bibitem{BG92}
T.~ Branson, P. ~Gilkey. 
\newblock Residues of the eta function for an operator of Dirac type with local boundary conditions.
\newblock{\em Differential Geom. Appl.} \textbf{2} (1992),  249–-267.

\bibitem{BLZ09}
B.~Boo{\ss}-Bavnbek, M.~Lesch, and C.~Zhu.
\newblock The {C}alder\'on projection: new definition and applications.
\newblock {\em J. Geom. Phys.} \textbf{59} (2009), no.\ 7, 784--826.

\bibitem{BW92}
B.~Boo{\ss}-Bavnbek and K.~P. Wojciechowski.
\newblock {\em Elliptic boundary problems for {D}irac operators}.
\newblock Mathematics: Theory \& Applications. Birkh\"auser Boston, Inc.,
  Boston, MA, 1993.

\bibitem{Bout}
L.\ Boutet de Monvel. 
Boundary value problems for pseudo-differential operators. 
\emph{Acta Math.} \textbf{126} (1971), no.\ 1-2, 11--51.

\bibitem{CG14}
F. \ Cipriani, D. \ Guido, T. \ Isola, J.-L. \ Sauvageot.
\newblock{Spectral triples for the {S}ierpinski gasket}.
\emph{J. Funct. Anal.} \textbf{266} (2014), no. \ 8, 4809--4869.

\bibitem{CC07}
A.~ Chamseddine, A. Connes.
\newblock Quantum gravity boundary terms from the spectral action on noncommutative space.
\newblock{\em Phys. Rev. Lett.} \textbf{99} (2007), no. \ 7.

\bibitem{CC13}
A.~ Chamseddine, A. Connes.
\newblock Noncommutative geometric spaces with boundary: spectral action. 
\newblock{\em  J. Geom. Phys.} \textbf{61} (2013), no. \ 1, 317--332.

\bibitem{CIS12}
E.\ Christensen, C. \ Ivan, E.\ Schrohe.
Spectral triples and the geometry of fractals. 
\emph{J. Noncommut. Geom.} \textbf{6} (2007), no.\ 2, 249--274. 

\bibitem{Co13}
A.~Connes.
\newblock On the spectral characterization of manifolds.
\newblock {\em Comm. Math. Phys.} \textbf{182} (2011), no.\ 1, 155--176.

\bibitem{CSS}
S.\ Coriasco, E.\ Schrohe, J.\ Seiler.
Realizations of differential operators on conic manifolds with boundary. 
\emph{Ann.\ Global Anal.\ Geom.} \textbf{31} (2007), no.\ 3, 223--285. 

\bibitem{DF94}
X. \ Dai, D. \ S. \ Freed.
$\eta$-invariants and determinant lines.
\emph{J. Math. Phys.} \textbf{35} (1994), no. \ 10, 5155--5194.

\bibitem{DFerra}
X. \ Dai, D. \ S. \ Freed.
Erratum: ``{$\eta$}-invariants and determinant lines'' [{J}.
              {M}ath. {P}hys. {\bf 35} (1994), no. 10, 5155--5194;
              {MR}1295462 (96a:58204)].
\emph{J. Math. Phys.} \textbf{42} (2001), no. \ 5, 2343--2344.

\bibitem{FGS98}
H.~Falomir, R.~E. Gamboa~Saravi, and E.~M. Santangelo.
\newblock Dirac operator on a disk with global boundary conditions.
\newblock {\em J. Math. Phys.} \textbf{39} (1998), no. \ 1, 532--544.

\bibitem{GN13}
D.~S. Grebenkov and B.-T. Nguyen.
\newblock Geometrical structure of {L}aplacian eigenfunctions.
\newblock {\em SIAM Rev.} \textbf{55}(2013), no. \ 4, 601--667.

\bibitem{Grub}
G.\ Grubb.
\emph{Functional Calculus of Pseudodifferential Boundary Problems}, 2nd edition. 
Birkh\"auser Verlag, 1996.

\bibitem{IoLe}
B.\ Jochum, C.\ Levy. 
Spectral triples and manifolds with boundary. 
\emph{J.\ Funct.\ Anal.} \textbf{260} (2011), no.\ 1, 117--134. 


\bibitem{IL13}
B.\ Jochum, C.\ Levy. 
Spectral action for torsion with and without boundaries. 
\emph{Comm. Math. Phys. } \textbf{310} (2012), no.\ 2, 367--382. 


\bibitem{KaSc}
D.\ Kapanadze, B.-W.\ Schulze. 
\emph{Crack Theory and Edge Singularities}. 
Mathematics and its Applications \textbf{561}, 
Kluwer Academic Publishers Group, 2003. 

\bibitem{LA97}
M. \ Lapidus.
Towards a noncommutative fractal geometry? {L}aplacians and volume measures on fractals.
\emph{Harmonic analysis and nonlinear differential equations $($Riverside, CA, 1995$)$, 211--252},
Contemp. Math., 208, Amer. Math. Soc., Providence, RI, 1997. 

\bibitem{Le01}
J.-M. \ Lescure.
Triplets spectraux pour les vari\'et\'es \`a singularit\'e conique isol\'ee.
\emph{Bull. Soc. Math.} France, \textbf{129} (2001), 593--623.

\bibitem{ReSc}
S.\ Rempel, B.-W.\ Schulze. 
\emph{Index Theory of Elliptic Boundary Problems}. 
Akademie-Verlag, 1982. 

\bibitem{Schr}
E.\ Schrohe. 
A short introduction to Boutet de Monvel's calculus. 
In J.B.\ Gil et al.\ (eds.), 
\emph{Approaches to Singular Analysis}, pp.\ 1--29.
Birkh\"auser Verlag, 2001. 

\bibitem{Schu37}
B.-W.\ Schulze.
An algebra of boundary value problems not requiring {Shapiro-Lopatinskij} conditions. 
\emph{J. Funct. Anal.} \textbf{179} (2001), no.\ 2, 374--408. 

\bibitem{Sc95}
S.\ G. \ Scott.
Determinants of {D}irac boundary value problems over odd-dimensional manifolds.
\emph{Comm. Math. Phys.} \textbf{173} (1995), no. \ 1, 43--76. 

\bibitem{Seil}
J.\ Seiler. 
Ellipticity in pseudodifferential algebras of Toeplitz type. 
\emph{J.\ Funct.\ Anal.} \textbf{263} (2012), no.\ 5, 1408--1434. 

\end{thebibliography}
\end{document}